\newtheorem{theorem}{Theorem}
\newtheorem{remark}[theorem]{Remark}
\newtheorem{algorithm}[theorem]{Algorithm}
\newtheorem{lemma}[theorem]{Lemma}
\newtheorem{corollary}[theorem]{Corollary}
\newtheorem{proposition}[theorem]{Proposition}
\numberwithin{theorem}{section}
\numberwithin{equation}{section}
\newcommand{\tri}{\mathcal{T}}
\newcommand{\faces}{\mathcal{F}}
\newcommand{\norm}[1]{\lVert#1\rVert}
\newcommand{\abs}[1]{\lvert#1\rvert}
\newcommand{\ennorm}[1]{\lvert\!\lvert\!\lvert #1 \rvert\!\rvert\!\rvert}
\newcommand{\hnull}{\norm{h_0}_\infty}
\newcommand{\nc}{{\textup{\tiny NC}}}
\newcommand{\ennormnc}[1]{\ennorm{#1}_\nc}
\newcommand{\Curl}{\operatorname{Curl}}
\newcommand{\osc}{\operatorname{osc}}
\newcommand{\card}{\operatorname{card}}
\newcommand{\Imorl}{\mathcal{I}}
\begin{document}

\title{Morley Finite Element Method for the Eigenvalues
       of the Biharmonic Operator}
\author{Dietmar Gallistl
        \thanks{Institut f\"ur Numerische Simulation, 
        Universit\"at Bonn,
         Wegelerstra{\ss}e 6,
            D-53115 Bonn, Germany,
           \texttt{gallistl@ins.uni-bonn.de}.
       \newline
       The author was supported by the DFG Research Center Matheon, Berlin.}}
\date{}
\maketitle

\begin{abstract}
This paper studies the nonconforming Morley finite element 
approximation of the eigenvalues of the biharmonic operator.
A new $C^1$ conforming companion operator leads to an $L^2$ error 
estimate for the Morley finite element method which directly compares the $L^2$
error with the error in the energy norm and, hence, can dispense with any
additional regularity assumptions.
Furthermore, the paper presents new eigenvalue error estimates for
nonconforming finite elements that bound the 
error of
(possibly multiple or clustered) eigenvalues 
by the 
approximation error of the computed invariant subspace.
An application is the proof of  optimal convergence rates
for the adaptive Morley finite element method for
eigenvalue clusters.
\end{abstract}

\noindent
{\footnotesize
 {\bf Keywords}
 eigenvalue problem, eigenvalue cluster, 
 Kirchhoff plate, biharmonic, Morley,
 adaptive finite element method}

\noindent
{\footnotesize
 {\bf AMS subject classifications} 65M12, 65M60, 65N25}

\section{Introduction}

Let $\Omega\subseteq \mathbb R^2$ be 
an open bounded 
Lipschitz domain with polygonal boundary $\partial\Omega$ and 
outer unit normal $\nu$.
The boundary is decomposed into mutually disjoint parts
\begin{equation*}
\partial\Omega = \Gamma_C \cup \Gamma_S \cup \Gamma_F
\end{equation*}
such that $\Gamma_C$ and $\Gamma_C\cup\Gamma_S$ are closed sets.
The vector space of admissible functions reads as
\begin{equation*}
 V:=\left\{v\in H^2(\Omega)\bigm| v|_{\Gamma_C\cup\Gamma_S}=0
           \text{ and }(\partial v/\partial\nu)|_{\Gamma_C} = 0
    \right\}.
\end{equation*}
The biharmonic eigenvalue problem seeks eigenpairs 
$(\lambda,u)\in\mathbb R \times V$ with
\begin{equation}\label{e:EVPclassic}
  (D^2 u, D^2 v)_{L^2(\Omega)} =  \lambda (u,v)_{L^2(\Omega)}
  \quad\text{for all } v\in V.
\end{equation}
In the Kirchhoff-Love plate model \citep{TimoshenkoGere1985},
the problem \eqref{e:EVPclassic} describes the vibrations of a
thin elastic plate subject to clamped ($\Gamma_C$),
simply supported ($\Gamma_S$) or free ($\Gamma_F$)
boundary conditions.
Nonconforming finite element discretisations of \eqref{e:EVPclassic}
appear attractive because they circumvent the use of complicated
$C^1$ conforming FEMs \citep{Ciarlet1978}. 
The nonconforming Morley finite element based on piecewise quadratic
polynomials can furthermore be employed for the computation of lower
eigenvalue bounds \citep{CarstensenGallistl2014}. For the linear 
biharmonic problem, the adaptive Morley FEM has been proven to produce
optimal convergence rates 
\citep{HuShiXu2012Morley,CarstensenGallistlHu2014}.

A~priori error estimates for the Morley finite element discretisation
of eigenvalue problems can be found in \citep{Rannacher1979}.
In the a~posteriori error analysis, in particular for the analysis
of adaptive algorithms, the $L^2$ error of the eigenfunction 
approximation can be viewed as a perturbation of the right-hand side.
Indeed, for conforming finite elements, the higher-order $L^2$ error
control follows from the Aubin-Nitsche duality technique
\citep{StrangFix1973}. This argument fails to hold in its original
form in the case of nonconforming finite elements. In order to obtain
error estimates in the $L^2$ norm that do not require additional
assumptions on the regularity of the solution, the works
\citep{CarstensenGallistlSchedensack2014,MaoShi2010} introduced
(for the Crouzeix-Raviart discretisation of second-order problems)
certain conforming companion operators that allow the proof of such
$L^2$ estimates. This paper introduces a corresponding operator
for the Morley finite element. This operator leads to a new $L^2$ error
estimate for the Morley finite element without any additional regularity
assumption. This is of particular interest in the case of non-clamped
boundary conditions where, in general, the exact solution is expected to
belong to $H^2(\Omega)\setminus H^{5/2}(\Omega)$.

Practical adaptive algorithms for multiple eigenvalues
\citep{DaiHeZhou2012v2} or eigenvalue clusters
\citep{Gallistl2014nc,Gallistl2014} are based on a~posteriori error 
estimators that involve the sum of the residuals of all discrete
eigenfunctions of interest. 
Let $\lambda_{n+1}\leq\dots\leq\lambda_{n+N}$ be the eigenvalue cluster
of interest with discrete approximations
$\lambda_{\ell,n+1}\leq\dots\leq\lambda_{\ell,n+N}$ computed by the
Morley FEM.
These error estimators bound the distance
of the exact invariant subspace of the corresponding eigenfunctions
$W=\operatorname{span}\{u_{n+1},\dots,u_{n+N}\}$
and the invariant subspace of discrete eigenfunctions
$W_\ell=\operatorname{span}\{u_{\ell,n+1},\dots,u_{\ell,n+N}\}$.
 For conforming finite elements, the 
results of \citet{KnyazevOsborn2006} show that this distance acts as
an upper bound of the eigenvalue error. This result, however, does
not directly apply to nonconforming finite element methods.
A generalisation for the Crouzeix-Raviart FEM for the eigenvalues of
the Laplacian is given in \citep{BoffiDuranGardiniGastaldi2014} where
it is used that the nonconforming finite element space has an 
$H^1$-conforming subspace. The Morley finite element does not satisfy
a corresponding condition; this paper develops a new technique which
allows the proof of eigenvalue error estimates of the form 
$$
 \lvert \lambda_j - \lambda_{\ell,j}\rvert 
  \big/ \max\{\lambda_j,\lambda_{\ell,j}\}
 \leq C \sin_{a,\nc}^2 (W,W_\ell).
$$
The constant $C$ and its dependence on the eigenvalue cluster will be
quantified more precisely. The angles are measured in the discrete
energy scalar product ($L^2$ product of the piecewise Hessians).
 The main idea is to study an auxiliary
eigenvalue problem in the sum 
$\widehat V_\ell := V + V_\ell$ of the continuous space $V$ and the
discrete space $V_\ell$.  
The arguments in the proof rely
on a careful analysis of the Morley interpolation operator and the
conforming companion operator.

As an application, the paper presents optimal convergence rates of
the adaptive Morley FEM for eigenvalue clusters. The proofs follow
the methodology of \citep{CKNS08,Stevenson2007} which has already been
applied in
\citep{DaiXuZhou2008,CarstensenGedicke2012,CarstensenGallistlSchedensack2014} 
for simple eigenvalues,
in \citep{DaiHeZhou2012v2} for multiple eigenvalues, and in
\citep{Gallistl2014nc,Gallistl2014} for clustered eigenvalues.

\medskip
The remaining parts of this paper are organised as follows.
Section~\ref{s:MorleyFEM} introduces the necessary notation on
triangulations and data structures, it proves new error estimates for
the Morley interpolation operator, and it presents a new conforming
companion operator.
Section~\ref{s:evalest} is devoted to the discretisation of the 
biharmonic eigenvalue problem and derives
new $L^2$ error estimates and new error estimates for the eigenvalues
whose proof is based on a new methodology.
Section~\ref{s:AFEM} applies the new results to the adaptive
finite element method for clustered eigenvalues and proves its
optimal convergence rates.

\bigskip
Throughout the paper standard notation on Lebesgue and
Sobolev spaces is employed.
The integral mean is denoted by $\fint$. 
The bullet $\bullet$ denotes the identity.
For any smooth function $f:\Omega\to\mathbb R$ the Curl reads as
$
  \Curl f := (-\partial f /\partial x_2,\;\partial f / \partial x_1) .
$
For a sufficiently smooth vector field
$\beta: \Omega\to\mathbb R^2$, define
\begin{equation*}
 \Curl \beta := \begin{pmatrix} 
           -\partial\beta_1/\partial x_2&\partial\beta_1/\partial x_1\\
            -\partial\beta_2/\partial x_2&\partial\beta_2/\partial x_1
            \end{pmatrix} .         
\end{equation*}
The symmetric part of a matrix $X$ is denoted by 
$\operatorname{sym}(X)$ and the space of symmetric $2\times 2$
matrices is denoted by $\mathbb{S}$.
The notation $a\lesssim b$
abbreviates $a\leq C b$ for a positive generic constant $C$
that may depend on the domain $\Omega$ and the
initial triangulation $\tri_0$ but not on the mesh-size or the 
eigenvalue cluster of interest.
The notation
$a\approx b$ stands for $a\lesssim b\lesssim a$.

\

\section{The Morley Finite Element Space}
 \label{s:MorleyFEM}

This section introduces the necessary notation and data structures
in Subsection~\ref{ss:datastruct} and proves some new results
for the Morley finite element in the remaining subsections.

\subsection{Notation and Data Structures}\label{ss:datastruct}

\paragraph{Triangulations.}
Let $\tri_0$ be a regular triangulation of $\Omega$,
 i.e., $\cup\tri_0 = \overline\Omega$ and any two distinct elements
 of $\tri_0$ are either disjoint or their intersection is
 exactly one common vertex or exactly one common edge.
 Throughout this paper, any regular triangulation of
 $\Omega$ is assumed to be admissible in the sense that it
 is regular and a refinement of some initial triangulation $\tri_0$
 created by newest-vertex bisection with proper initialisation 
 of the refinement edges  \citep{BinevDahmenDeVore2004,Stevenson2008}.
 The set of all admissible  refinements is denoted by $\mathbb T$.
 The restriction to this class of triangulations is not essential 
 in Sections~\ref{s:MorleyFEM}--\ref{s:evalest}, but is made to ease
 notation in view of the adaptive algorithms studied in 
 Section~\ref{s:AFEM}.
 Given a triangulation $\tri_\ell\in\mathbb T$,
 the piecewise constant
 mesh-size function $h_\ell:=h_{\tri_\ell}$ is defined by
 $h_\ell|_T := h_T := \operatorname{meas}(T)^{1/2}$ for any
 triangle $T\in\tri_\ell$.
 For all regular triangulations $\tri_\ell\in\mathbb T$ 
of $\Omega$, it is assumed that the relative interior of
each boundary edge is contained in one of the parts
$\Gamma_C$, $\Gamma_S$, or $\Gamma_F$ (in fact, this is
only a condition on $\tri_0$).

 \paragraph{Edges.}
 The set of edges of a triangle $T$ is denoted by $\faces(T)$.
 The edges of $\tri_\ell$ read as
 $\faces_\ell:=\faces(\tri_\ell):= \cup_{T\in\tri_\ell} \faces(T)$.
 The edges that belong to the boundary read
 $\faces_\ell(\partial\Omega)$ and the interior edges read
 $\faces_\ell(\Omega):= \faces_\ell \setminus \faces_\ell(\partial\Omega)$.
 Let $\Gamma\subseteq\partial\Omega$ be a subset of the boundary
 $\partial\Omega$. The boundary edges that belong to $\Gamma$
 are denoted by
 $
 \faces_\ell(\Gamma)
 :=\{F\in\faces_\ell\mid \mathcal{H}^1(F\cap \Gamma)>0\}
$,
where $\mathcal{H}^1$ is the one-dimensional Hausdorff measure.
Furthermore, define
$\faces_\ell(\Omega\cup\Gamma) := \faces_\ell(\Omega) \cup\faces_\ell(\Gamma)$.
For any edge $F\in\faces_\ell$, the edge patch is defined as
$\omega_F:=\operatorname{int}(\cup\{T\in\tri_\ell\mid F\in\faces(T)\})$.
Given any vertex of $\tri_\ell$, the set of edges that share $z$
is denoted by $\faces_\ell(z):=\{F\in\faces_\ell\mid z\in F\}$.
The length of an edge $F$ reads $h_F$.

\paragraph{Vertices.}
The set of vertices of a triangle $T$ is denoted by 
$\mathcal N (T)$.
Define
$\mathcal N_\ell:=\mathcal N(\tri_\ell)
:= \cup_{T\in\tri_\ell}\mathcal N(T)$
as the set of vertices of $\tri_\ell$.
The set of vertices that belong to some subset $\omega\subseteq\Omega$
is denoted by
$\mathcal N_\ell(\omega) := \mathcal N_\ell \cap \omega$.

\paragraph{Normal and tangent vectors.}
 Let every edge
 $F\in\faces_\ell$ be equipped with a fixed normal vector 
 $\nu_F$.  
 If $F\in\faces_\ell(\partial\Omega)$ belongs to the boundary,
 $\nu_F:=\nu$ is chosen to point outwards $\Omega$.
Let for any edge $F\in\faces_\ell$ with normal vector
$\nu_F =(\nu_F(1);\nu_F(2))$ the tangent vector
be defined as $\tau_F:=(-\nu_F(2);\nu_F(1))$ and
denote by $\tau := (-\nu(2);\nu(1))$ the tangent vector
of $\partial \Omega$.

\paragraph{Jumps.}
Given $F\in\faces_\ell(\Omega)$, 
 $F=\partial T_+ \cap \partial T_-$ shared by two triangles
 $(T_+, T_-)\in\tri_\ell^2$, and a piecewise (possibly vector-valued)
 smooth function $v$, define the jump of $v$ across $F$ by
 \begin{equation*}
   \left[v\right]_F := v|_{T_+} - v|_{T_-}.
 \end{equation*}
 For edges $F\subseteq\partial\Omega$ on the boundary,
 $[v]_F:=v|_F$ denotes the trace.

 \paragraph{Piecewise polynomials and oscillations.}
The set of polynomials of degree $\leq k$ over a subset
$\omega\subseteq\Omega$ is denoted by $\mathcal P_k(\omega)$.
  The set of piecewise polynomial functions of degree
  $\leq k$ with respect to $\tri_\ell$ is denoted by 
  $\mathcal{P}_k(\tri_\ell)$.
 The $L^2$ projection onto 
 $\mathcal{P}_k(\tri_\ell)$ is denoted by
 $\Pi_{\tri_\ell}^k \equiv \Pi_\ell^k$.
 The $k$-th order oscillations of a given function $f\in L^2(\Omega)$
is defined as
\begin{equation*}
 \osc_k(f,\tri_\ell) := \| h_\ell^2 (1-\Pi_\ell^k) f \|_{L^2(\Omega)}.
\end{equation*}

\paragraph{Piecewise action of differential operators.}
The piecewise action of a differential operator is indicated by the 
subscript NC,  i.e., the piecewise versions of
 $D$ and $D^2$ read as
 $D_{\nc}\equiv D_{\nc(\tri_\ell)}$ and
 $D_{\nc}^2\equiv D^2_{\nc(\tri_\ell)}$,
  e.g., $(D_\nc v)|_T = D(v|_T)$ for any $T\in\tri_\ell$.
 The dependence on $\tri_\ell$ in this notation is dropped whenever
 there is no risk of confusion.

\paragraph{Functional setting.}
The vector space of admissible functions reads as
\begin{equation*}
 V:=\left\{v\in H^2(\Omega)\bigm| v|_{\Gamma_C\cup\Gamma_S}=0
           \text{ and }(\partial v/\partial\nu)|_{\Gamma_C} = 0
    \right\}.
\end{equation*}
Define the bilinear form 
\begin{equation*}
  a(v,w) : =  ( D^2 v , D^2 w)_{L^2(\Omega)}
  \quad \text{for all }  (v,w)\in V^2
\end{equation*}
with induced seminorm $\ennorm{\cdot} := a(\cdot,\cdot)^{1/2}$
and $b(\cdot,\cdot):=(\cdot,\cdot)_{L^2(\Omega)}$ with induced
norm $\norm{\cdot}$.
Throughout this paper it is assumed that the only affine
function in $V$ is zero, i.e., $V\cap\mathcal P_1(\Omega) = \{0\}$.
Hence, $a$ is a scalar product on $V$ with norm $\ennorm{\cdot}$.

The Morley finite element space reads as
\begin{align*}
 V_\ell :=
 \left\{  v  \in \mathcal P_2(\tri_\ell)
  \left|
   \begin{array}{l}
   v \text{ is continuous at } \mathcal N_\ell(\Omega)
   \text{ and vanishes at } \mathcal N_\ell(\Gamma_C \cup \Gamma_S);\\
   D_\nc v
    \text{ is continuous at the interior edges' midpoints}\\
   \text{and vanishes at the midpoints of the edges of } \Gamma_C
   \end{array}
   \right.
  \right\}   .
\end{align*}
On each triangle the local degrees of freedom are the evaluation
of the function at each vertex and the evaluation of the normal
derivative at the edges' midpoints. See Figure~\ref{sf:FEMs:Morley}
for an illustration.

The discrete version of the energy scalar product reads as
\begin{equation*}
  a_\nc(v,w) := ( D^2_\nc v , D^2_\nc w)_{L^2(\Omega)}
  \quad\text{for all }
  (v,w) \in ( V + V_\ell)^2
\end{equation*}
with induced discrete energy norm
$\ennormnc{\cdot} := a_\nc(\cdot,\cdot)^{1/2}$.
Indeed, the assumption $V\cap\mathcal P_1(\Omega) =\{0\}$
implies $V_\ell\cap\mathcal P_1(\Omega) =\{0\}$.
Hence, $a_\nc(\cdot,\cdot)$ defines a scalar product on $V_\ell$
(as shown in Corollary~\ref{c:dFMorley}, the ellipticity is
 is even uniform in the mesh parameter).

\paragraph{Principal angles between subspaces.}
For finite-dimensional subspaces
$X\subseteq V + V_\ell$ and
$Y\subseteq V + V_\ell$,
the sine of the largest principal angle from $X$
to $Y$ is denoted by
\begin{equation*}
\sin_{a,\nc}\angle(X,Y)
=
\sup_{\substack{x\in X \\ \ennormnc{x}=1}}
   \inf_{y\in Y} \ennormnc{x-y} .
\end{equation*}
It is well known \citep[Thm.\ 6.34 in Chapter 1, \S 6]{Kato1966}
that in the case of $\dim(X)=\dim(Y)<\infty$ it holds that
\begin{equation}\label{e:SinVertausch}
\sin_{a,\nc}\angle(X,Y) = \sin_{a,\nc}\angle(Y,X)
\end{equation}
as well as
\begin{equation}\label{e:SinDreiecksungl}
\sin_{a,\nc}(X,Y) 
\leq \sin_{a,\nc}\angle(X,Z)+\sin_{a,\nc}\angle(Z,Y)
\end{equation}
for any subspace $Z\subseteq V +V_\ell$ with
$\dim(X)=\dim(Y)=\dim(Z)<\infty$.

\subsection{Morley Interpolation Operator}

Let $\tri_{\ell+m}$ be any admissible refinement 
of $\tri_\ell$.
The Morley interpolation operator
$\Imorl_\ell: V + V_{\ell+m} \to V_\ell$
is defined via
  \begin{equation*}
   \begin{aligned}
    (\Imorl_\ell v ) (z) 
    &
    = v (z)
    &&\text{for any }z\in\mathcal N_\ell 
          \text{ and any } v\in V +V_{\ell+m}     ,
    \\
    \int_F \frac{\partial \Imorl_\ell v}{ \partial \nu_F }\,ds  
    &
    = \int_F \frac{\partial v}{ \partial \nu_F}\,ds
    &&\text{for any }F\in\faces_\ell 
          \text{ and any } v\in V+V_{\ell+m} .
   \end{aligned}
  \end{equation*}
  A piecewise integration by parts proves the
  projection property for the Hessian
  \begin{equation}\label{e:MorleyInterpolProjProp}
    \Pi_\ell^0 D^2_\nc = D^2_\nc \Imorl_\ell.
  \end{equation}

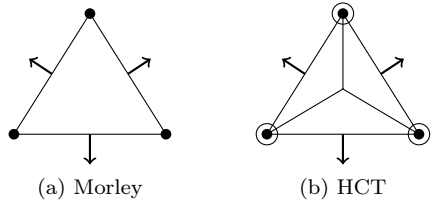
\begin{figure}[tb]
\begin{center}
\subfloat[Morley\label{sf:FEMs:Morley}]{
\begin{tikzpicture}[scale=2]
 \draw (0,0)--(1,0)--(.5,.8)--cycle;
 \foreach \x/\y  in {0/0,1/0,.5/.8}
      {  \fill (\x,\y) circle (1pt);}
 \foreach \a/\b/\c/\d  in {.5/0/.5/-.2,
                     .75/.4/.9/.5,
                      .25/.4/.1/.5}
      {\draw[->,thick] (\a,\b)--(\c,\d);}
\end{tikzpicture}
}
\hspace{5ex}
\subfloat[HCT\label{sf:FEMs:HCT}]{
\begin{tikzpicture}[scale=2]
 \draw (0,0)--(1,0)--(.5,.8)--cycle
       --(.5,.3)--(1,0)
       (.5,.3)--(.5,.8);
 \foreach \x/\y  in {0/0,1/0,.5/.8}
      {  \fill (\x,\y) circle (1pt);
         \draw (\x,\y) circle (2pt);}
 \foreach \a/\b/\c/\d  in {.5/0/.5/-.2,
                     .75/.4/.9/.5,
                      .25/.4/.1/.5}
      {\draw[->,thick] (\a,\b)--(\c,\d);}
\end{tikzpicture}
}
\end{center}
\caption{Mnemonic diagrams of the Morley (left) and the
         HCT (right) finite element.\label{f:FEMs}}
\end{figure}

The following generalisation of the trace inequality
\citep{CarstensenFunken2000,DiPietroErn2012} is necessary for proving 
error estimates for the Morley interpolation operator.

\begin{proposition}[discrete trace inequality]\label{p:DiscrTrace}
Let $T\in\tri_\ell$ be a triangle and $\mathcal K$ be a regular
triangulation of $T$ and let $G\in\faces(T)$ be an edge of $T$.
Any piecewise (with respect to $\mathcal K$) smooth function
$f$ satisfies the discrete trace inequality
\begin{equation*}
 \norm{f}_{L^2(G)}
\lesssim
 h_T^{-1/2} \norm{f}_{L^2(T)}
 + h_T^{1/2} \norm{D_{\nc(\mathcal K)} f}_{L^2(T)}
 + h_T^{1/2}\sqrt{
   \sum_{\substack{F\in\faces(\mathcal K) 
                   \\ F\not\subseteq \partial T}}
   h_F^{-1}\norm{[f]_F}_{L^2(F)}^2   }  .
\end{equation*}

\end{proposition}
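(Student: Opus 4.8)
The plan is to reduce the discrete trace inequality on $T$ to the classical continuous trace inequality by first passing from the piecewise smooth function $f$ to a suitable $H^1(T)$-conforming approximation $\tilde f$, controlling the difference $f-\tilde f$ through the jump terms. Concretely, I would let $\tilde f\in H^1(T)$ be a smoothing (for instance an averaging / Oswald-type interpolant with respect to the triangulation $\mathcal K$ into a continuous piecewise polynomial space of matching degree), chosen so that the standard broken-space estimates give
\begin{equation*}
 \norm{f-\tilde f}_{L^2(T)}^2 \lesssim \sum_{\substack{F\in\faces(\mathcal K)\\ F\not\subseteq\partial T}} h_F\,\norm{[f]_F}_{L^2(F)}^2
 \quad\text{and}\quad
 \norm{D_{\nc(\mathcal K)}(f-\tilde f)}_{L^2(T)}^2 \lesssim \sum_{\substack{F\in\faces(\mathcal K)\\ F\not\subseteq\partial T}} h_F^{-1}\,\norm{[f]_F}_{L^2(F)}^2 .
\end{equation*}
On each edge $F$ of $\mathcal K$ lying on $G$, the trace of $f$ from one side differs from $\tilde f$ by (a piece of) the jump, so on $G$ one also gets $\norm{f-\tilde f}_{L^2(G)}^2 \lesssim \sum_{F\subseteq G} \norm{[f]_F}_{L^2(F)}^2 \lesssim \sum_F h_F^{-1}\norm{[f]_F}_{L^2(F)}^2 \cdot \max_F h_F \le \sum_F h_F^{-1}\norm{[f]_F}_{L^2(F)}^2 \cdot h_T$, which already matches the third term on the right-hand side up to the factor $h_T^{1/2}$ after taking square roots.

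Next I would apply the \emph{continuous} (scaled) trace inequality on the single triangle $T$ to $\tilde f\in H^1(T)$,
\begin{equation*}
 \norm{\tilde f}_{L^2(G)} \lesssim h_T^{-1/2}\norm{\tilde f}_{L^2(T)} + h_T^{1/2}\norm{D\tilde f}_{L^2(T)},
\end{equation*}
and then bound $\norm{\tilde f}_{L^2(T)}\le\norm{f}_{L^2(T)}+\norm{f-\tilde f}_{L^2(T)}$ and $\norm{D\tilde f}_{L^2(T)}\le\norm{D_{\nc(\mathcal K)}f}_{L^2(T)}+\norm{D_{\nc(\mathcal K)}(f-\tilde f)}_{L^2(T)}$, inserting the two displayed estimates for the differences. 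Collecting the pieces and using the triangle inequality $\norm{f}_{L^2(G)}\le\norm{\tilde f}_{L^2(G)}+\norm{f-\tilde f}_{L^2(G)}$ yields exactly the asserted bound; note that the term $h_T^{-1/2}\cdot(\text{jump contribution with }h_F)$ is absorbed into $h_T^{1/2}\sqrt{\sum_F h_F^{-1}\norm{[f]_F}_{L^2(F)}^2}$ because $h_F\le h_T$.

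The main obstacle — and the point requiring care — is establishing the two broken-space estimates for $f-\tilde f$ with the correct powers of the \emph{local} mesh-size $h_F$ rather than $h_T$, and doing so for a genuinely piecewise smooth (not piecewise polynomial) $f$. The clean way is to first reduce to piecewise polynomials by local $L^2$-projection on each $K\in\mathcal K$ (whose error is controlled by $\norm{D_{\nc(\mathcal K)}f}$ and feeds harmlessly into the first two terms on the right-hand side since $h_K\le h_T$), and then invoke the standard Oswald/enrichment estimates for the averaging operator, which are classical for shape-regular $\mathcal K$; alternatively one argues directly on each $K$ with a local trace inequality and a jump-telescoping argument across $\mathcal K$. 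One must also keep track of the fact that the sum is only over interior edges $F\not\subseteq\partial T$, so the conforming approximation $\tilde f$ need not match any boundary data of $f$ on $\partial T$, which is consistent with the right-hand side containing the full $L^2(T)$ and $D_{\nc}$ norms of $f$.
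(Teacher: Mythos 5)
Your route is genuinely different from the paper's. The paper does not smooth $f$ at all: it proves an exact piecewise integration-by-parts identity for $\int_G f\,ds$ using the vector field $\bullet-P_G$ (with $P_G$ the vertex opposite $G$), applies that identity to $f^2$, and then handles $\int_T D_{\nc}(f^2)\,dx$ by Young's inequality and the interior jump terms via $[f^2]_F=[f]_F(f_++f_-)$ together with a trace and inverse estimate on each edge patch. That argument is short, self-contained, and makes the constants transparent, which is the author's stated motivation. Your argument instead buys the result from two standard but nontrivial black boxes (Oswald/enrichment estimates on $\mathcal K$ and the scaled continuous trace inequality on $T$); both proofs ultimately need shape regularity of $\mathcal K$ and finite overlap of patches, and both correctly avoid any bound on $h_T/h_F$, so neither is more general than the other. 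Your reduction from piecewise smooth to piecewise polynomial via local $L^2$ projection, and the absorption of $h_T^{-1/2}(\sum_F h_F\norm{[f]_F}_{L^2(F)}^2)^{1/2}$ into the third term using $h_F\le h_T$, are both fine.

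One step of your write-up is wrong as literally stated and needs repair: the bound
$\norm{f-\tilde f}_{L^2(G)}^2\lesssim\sum_{F\subseteq G}\norm{[f]_F}_{L^2(F)}^2$.
Edges $F\subseteq G$ lie on $\partial T$ and are excluded from the jump sum; with the paper's convention $[f]_F=f|_F$ there, your right-hand side is just $\norm{f}_{L^2(G)}^2$ and the step is circular. The difference $f-\tilde f$ on a boundary edge is \emph{not} a piece of any jump; it is the deviation of $f|_K$ from the nodal averages at the endpoints of $F$, which involves the interior edges meeting those nodes. The correct fix is either the standard nodal-averaging estimate
$\norm{f-\tilde f}_{L^2(F)}^2\lesssim\sum_{F'\in\faces(\mathcal K),\,F'\not\subseteq\partial T,\,F'\cap F\neq\emptyset}\norm{[f]_{F'}}_{L^2(F')}^2$,
or a local scaled trace inequality on the triangle $K\supseteq F$ applied to $f-\tilde f$, followed by your two broken-norm estimates; either way one lands on $h_T\sum_{F'}h_{F'}^{-1}\norm{[f]_{F'}}_{L^2(F')}^2$ as required. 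With that correction the argument closes.
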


\begin{proof}
 Denote by $P_G$ the vertex of $T$ opposite to $G$.
A piecewise integration by parts proves the discrete trace identity
\begin{equation*}
\frac12 \int_T (\bullet - P_G)\cdot D_{\nc(\mathcal K)} f \,dx
=
-\int_T f \,dx
+ \operatorname{dist}(P_G,G) \int_G f \,ds
+ \sum_{\substack{F\in\faces(\mathcal K) 
                   \\  F\not\subseteq \partial T}}
  \int_F (\bullet - P_G)\cdot\nu_F [f]_F\,ds .
\end{equation*}
The application of this identity to the function $f^2$
together with elementary algebraic manipulations and
$\operatorname{dist}(P_G,G)\leq \operatorname{diam}(T)\lesssim h_T$
result in
\begin{equation}\label{e:DisTracefSq}
\norm{f}_{L^2(G)}^2
\lesssim
\left| \int_T D_{\nc(\mathcal K)} (f^2) \,dx \right|
+
h_T^{-1} \norm{f}_{L^2(T)}^2
+ 
h_T^{-1}\sum_{\substack{F\in\faces(\mathcal K) 
                   \\  F\not\subseteq \partial T}}
  \int_F (\bullet - P_G)\cdot\nu_F [f^2]_F\,ds .
\end{equation}
The Young inequality
shows that the first term on the right-hand side can be controlled
as
\begin{equation*}
\begin{aligned}
\left| \int_T D_{\nc(\mathcal K)} (f^2) \,dx \right|
=
\left| \int_T 2 f D_{\nc(\mathcal K)} f \,dx \right|
&
\leq
2
h_T^{-1/2}
\norm{ f}_{L^2(T)}
h_T^{1/2}  \norm{D_{\nc(\mathcal K)} f }_{L^2(T)}
\\
&
\leq 
h_T^{-1}
\norm{ f}_{L^2(T)}^2
+
h_T  \norm{D_{\nc(\mathcal K)} f }_{L^2(T)}^2.
\end{aligned}
\end{equation*}
It remains to bound  the third term on the right-hand side
of \eqref{e:DisTracefSq}.
Let $F\in\faces(\mathcal K)$ be an interior edge shared by two
triangles $K_+$ and $K_-$ such that $F=K_+\cap K_-$.
Denote $f_+ := f|_{K_+}$ and $f_-:= f|_{K_-}$.
A direct calculation proves for the jump of $f^2$ across $F$ that
\begin{equation*}
 [f^2]_F = [f]_F (f_+ + f_-).
\end{equation*}
Thus, the Cauchy and triangle inequalities followed by
the Young inequality prove
\begin{equation*}
\begin{aligned}
 &\int_F (\bullet - P_G)\cdot\nu_F [f^2]_F\,ds 
 \\
 &
 \qquad\leq
 \operatorname{diam}(T) h_F^{-1/2}h_T^{1/2} \norm{[f]_F}_{L^2(F)}
           h_F^{1/2}h_T^{-1/2}
                (\norm{f_+}_{L^2(F)} +  \norm{f_-}_{L^2(F)})
 \\
 &               
 \qquad\leq              
 \operatorname{diam}(T) 
     \left(h_F^{-1}h_T \norm{[f]_F}_{L^2(F)}^2
           + h_F h_T^{-1}
                (\norm{f_+}_{L^2(F)} +  \norm{f_-}_{L^2(F)}) ^2
      \right).
\end{aligned}
\end{equation*}
The trace inequality \citep{CarstensenFunken2000,DiPietroErn2012}
and an inverse estimate \citep{BrennerScott2008}
applied to the edge patch $\omega_F$ prove that
\begin{equation*}
  h_F h_T^{-1}  (\norm{f_+}_{L^2(F)} +  \norm{f_-}_{L^2(F)}) ^2
 \lesssim
 h_T^{-1} \norm{f}_{L^2(\omega_F)}^2.
\end{equation*}
The foregoing two displayed inequalities, 
the finite overlap of the edge patches and the shape regularity prove
\begin{equation*}
 h_T^{-1}\sum_{\substack{F\in\faces(\mathcal K) 
                   \\  F\not\subseteq \partial T}}
  \int_F (\bullet - P_G)\cdot\nu_F [f^2]_F\,ds
\lesssim
 h_T^{-1} \norm{f}_{L^2(T)}^2
+
h_T
\sum_{\substack{F\in\faces(\mathcal K) 
                   \\  F\not\subseteq \partial T}}
  h_F^{-1} \norm{[f]_F}_{L^2(F)}^2
.
\end{equation*}
The combination of the above estimates concludes the proof.
\end{proof}

\begin{remark}
 In Proposition~\ref{p:DiscrTrace},
 the ratio $h_T/h_F$ is not required to be uniformly bounded.
\end{remark}

The next proposition provides an error estimate for the
Morley interpolation operator.
In contrast to the estimate from \citep{CarstensenGallistl2014} with
an explicit constant for the Morley interpolation when applied
to an $H^2$ function,
the following result gives an estimate for
more general piecewise smooth functions.

\begin{proposition}[error estimate for the Morley interpolation]
                   \label{p:IlEstimate}
Let $T\in\tri_\ell$ be a triangle, and let $\tri_{\ell+m}$
be a regular triangulation of $T$.
Any $v_{\ell+m}\in V+V_{\ell+m}$ and its interpolation
$\Imorl_{\ell} v_{\ell+m}$ satisfy
\begin{equation}\label{e:MorleyInterpolApprxStab}
   \begin{aligned}
      \lVert h_T^{-2} (1-\Imorl_\ell)v_{\ell+m} \rVert_{L^2(T)}
   &+ \lVert h_T^{-1} D_\nc(1-\Imorl_\ell) v_{\ell+m} \rVert_{L^2(T)}
 \\
  &
  \qquad\qquad\qquad\qquad
   \lesssim
    \lVert D^2_\nc (1-\Imorl_\ell)v_{\ell+m} \rVert_{L^2(T)}.
   \end{aligned}
  \end{equation}
\end{proposition}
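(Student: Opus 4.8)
The plan is to prove the three-term estimate \eqref{e:MorleyInterpolApprxStab} by a scaling/norm-equivalence argument on the reference triangle, combined with the projection property \eqref{e:MorleyInterpolProjProp} of the Morley interpolation and the discrete trace inequality of Proposition~\ref{p:DiscrTrace}. First I would reduce to the case $h_T\approx 1$ by an affine change of variables to a reference triangle $\widehat T$; the powers of $h_T$ in \eqref{e:MorleyInterpolApprxStab} are exactly those dictated by the scaling of the $L^2$, $H^1$ and $H^2$ seminorms, so it suffices to prove the estimate with all $h_T$ omitted and with the understanding that the generic constant may then depend on the shape of $\widehat T$ (hence on the shape regularity of $\tri_\ell$, which is uniform over $\mathbb T$). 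Write $w := (1-\Imorl_\ell)v_{\ell+m}$, which lives in $V|_T + V_{\ell+m}|_T$, i.e.\ it is piecewise smooth with respect to the sub-triangulation $\tri_{\ell+m}|_T$ and has vanishing Morley degrees of freedom on $T$: $w$ vanishes at the vertices $\mathcal N(T)$ and $\int_F \partial w/\partial\nu_F\,ds = 0$ for every $F\in\faces(T)$.

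The core step is to establish, on the reference triangle, the bound
\begin{equation*}
 \norm{w}_{L^2(T)} + \norm{D_\nc w}_{L^2(T)}
 \lesssim
 \norm{D^2_\nc w}_{L^2(T)}
 + \sqrt{\sum_{\substack{F\in\faces(\tri_{\ell+m}|_T)\\ F\not\subseteq\partial T}}
         h_F^{-1}\norm{[w]_F}_{L^2(F)}^2}
 + \sqrt{\sum_{\substack{F\in\faces(\tri_{\ell+m}|_T)\\ F\not\subseteq\partial T}}
         h_F^{-1}\norm{[D_\nc w]_F}_{L^2(F)}^2}.
\end{equation*}
This follows from a Friedrichs/Poincar\'e-type inequality for piecewise $H^2$ functions: the full (broken) $H^2$ norm of $w$ is controlled by the broken $H^2$ seminorm together with the Morley degrees of freedom on $T$ (which all vanish) and the jump terms across the interior edges of the sub-triangulation. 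One proves it by contradiction and compactness on a fixed reference configuration, or by a direct argument: the jump-plus-seminorm terms on the right control, via Proposition~\ref{p:DiscrTrace} applied to $w$ and to the components of $D_\nc w$, all the edge traces needed to bound the relevant functionals, and the vanishing vertex values and vanishing edge-mean normal derivatives kill the remaining affine degrees of freedom. It then remains to discard the two jump sums. Here I would use the key observation that $v_{\ell+m}\in V+V_{\ell+m}$ already has continuous point values at the vertices $\mathcal N_{\ell+m}$ and continuous edge-midpoint normal derivatives, while $\Imorl_\ell v_{\ell+m}$ is a (single) quadratic polynomial on $T$, hence smooth there. Consequently the jumps $[w]_F = [v_{\ell+m}]_F$ and $[D_\nc w]_F\cdot\nu = [\partial v_{\ell+m}/\partial\nu]_F$ across an interior edge $F$ of $\tri_{\ell+m}|_T$ have vanishing value at the endpoints of $F$ and vanishing integral mean, respectively; a one-dimensional Poincar\'e inequality on $F$ then gives $h_F^{-1}\norm{[w]_F}_{L^2(F)}^2 \lesssim h_F\norm{\partial_s[w]_F}_{L^2(F)}^2 \lesssim h_F\norm{[D_\nc w]_F}_{L^2(F)}^2$ and similarly $h_F^{-1}\norm{[D_\nc w]_F\cdot\nu_F}_{L^2(F)}^2\lesssim h_F\norm{[D^2_\nc w]_F}_{L^2(F)}^2$, after which the discrete trace inequality of Proposition~\ref{p:DiscrTrace} (applied on each sub-element, summed over $\tri_{\ell+m}|_T$ with finite overlap) absorbs everything into $\norm{D^2_\nc w}_{L^2(T)}$ up to lower-order terms that can be hidden on the left. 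Finally, \eqref{e:MorleyInterpolProjProp} identifies $D^2_\nc w = D^2_\nc(1-\Imorl_\ell)v_{\ell+m} = (1-\Pi^0_\ell)D^2_\nc v_{\ell+m}$, so the right-hand side is genuinely the claimed quantity; rescaling back to the original triangle restores the powers of $h_T$.

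The main obstacle I anticipate is the careful treatment of the interior jump terms: one must exploit precisely the right amount of continuity built into $V+V_{\ell+m}$ (vertex continuity and midpoint normal-derivative continuity) and combine it with the smoothness of the piecewise-polynomial interpolant on $T$ to get Poincar\'e control of the jumps by higher-order jumps, and then the bootstrap through Proposition~\ref{p:DiscrTrace} must be organised so that the genuinely uncontrolled terms cancel rather than merely being bounded by themselves. A secondary subtlety is that $\tri_{\ell+m}|_T$ is an essentially arbitrary regular refinement of $T$, so all constants must be traced to depend only on shape regularity and not on the number of sub-elements or on the ratio $h_T/h_F$; the remark following Proposition~\ref{p:DiscrTrace} is exactly what makes this permissible, and the finite-overlap property of the edge patches $\omega_F$ keeps the summation uniform.
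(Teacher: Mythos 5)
Your overall architecture (reduce by scaling, exploit the vanishing Morley degrees of freedom of $w=(1-\Imorl_\ell)v_{\ell+m}$, control $\norm{w}_{L^2(T)}$ and $\norm{D_\nc w}_{L^2(T)}$ through a broken Friedrichs inequality plus the discrete trace inequality of Proposition~\ref{p:DiscrTrace}, and convert the jumps of $w$ and of $D_\nc w$ into jumps of the piecewise Hessian via one-dimensional Friedrichs/Poincar\'e inequalities on the interior edges) matches the paper's proof closely up to and including the reduction to the terms $\sum_F h_F\norm{[D^2_\nc w]_F\tau_F}^2_{L^2(F)}$. (One omission: you should first reduce, by density, to $v_{\ell+m}\in H^4(\operatorname{int}(T))+V_{\ell+m}$, since for a general $H^2$ function the Hessian has no edge trace and these jumps are otherwise undefined; the paper does this at the outset.)

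The genuine gap is in your last step. You propose to absorb $\sum_F h_F\norm{[D^2_\nc w]_F\tau_F}^2_{L^2(F)}$ into $\norm{D^2_\nc w}^2_{L^2(T)}$ by applying the discrete trace inequality on each sub-element, ``up to lower-order terms that can be hidden on the left.'' This cannot work. A trace inequality for $D^2_\nc w$ on a sub-element $K$ produces a term $h_K\norm{D^3 w}^2_{L^2(K)}$; an inverse estimate removes it only for the piecewise-quadratic part $v'\in V_{\ell+m}$, not for the $H^2$ (or, after density, $H^4$) part, whose third derivatives are controlled by nothing on the right-hand side of \eqref{e:MorleyInterpolApprxStab}. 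If instead you note that $[D^2_\nc w]_F=[D^2_\nc v']_F$ and apply trace plus inverse estimates to $v'$ alone, you obtain a bound by $\norm{D^2_\nc v'}_{L^2(\omega_F)}$, which is not controlled by $\norm{D^2_\nc w}_{L^2(\omega_F)}$ (take $v'$ large with $v+v'$ close to a quadratic). There is also no small parameter available for the claimed absorption. The paper closes this step with a bubble-function efficiency argument: since $[D^2_\nc v_{\ell+m}]_F\tau_F$ is constant along $F$, testing with $\psi_F=\bm\flat_F[D^2_\nc v_{\ell+m}]_F\tau_F\in H^1_0(\omega_F;\mathbb R^2)$ and integrating by parts gives
\begin{equation*}
\norm{\bm\flat_F^{1/2}[D^2_\nc v_{\ell+m}]_F\tau_F}_{L^2(F)}^2
=(D^2_\nc(v_{\ell+m}-v),\Curl\psi_F)_{L^2(\omega_F)}
\quad\text{for every } v\in H^2(\omega_F),
\end{equation*}
because $\Curl\psi_F$ is $L^2$-orthogonal to Hessians of $H^2$ functions. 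Cauchy and inverse inequalities then yield $h_F\norm{[D^2_\nc v_{\ell+m}]_F\tau_F}^2_{L^2(F)}\lesssim\min_{v\in H^2(\operatorname{int}(T))}\norm{D^2_\nc(v_{\ell+m}-v)}^2_{L^2(\omega_F)}$, and the choice $v=\Imorl_\ell v_{\ell+m}$ (a single polynomial on $T$, hence in $H^2(\operatorname{int}(T))$) together with the finite overlap of the edge patches gives exactly $\norm{D^2_\nc w}^2_{L^2(T)}$. This orthogonality device is the essential ingredient your argument is missing.
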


\begin{remark}
 Error estimates of this type are stated and utilised in
 \citep{HuShiXu2012Morley} with a proof based on equivalence of
 norms.
 To make the constant in the estimate more transparent, a new
 proof is given here.
 It shall be pointed out that the
 constant in the assertion of Proposition~\ref{p:IlEstimate}
 does not depend on the triangulation $\tri_{\ell+m}$.
\end{remark}

\begin{proof}[Proof of Proposition~\ref{p:IlEstimate}]
Let, without loss of generality, 
         $v_{\ell+m}\in H^4(\operatorname{int}(T))+ V_{\ell+m}$
         (the general case then follows with a density argument).
The discrete Friedrichs inequality
\citep[Thm.~10.6.12]{BrennerScott2008} together with a scaling
argument
and the fact that $\Imorl_\ell v_{\ell+m}$ is continuous on $T$
yield that
\begin{equation*}
 \begin{aligned}
 \norm{(1-\Imorl_\ell)v_{\ell+m}}_{L^2(T)} ^2
\lesssim
\left\vert\int_{\partial T} (1-\Imorl_\ell)v_{\ell+m} \,ds \right\vert^2
&
+
h_T^2
\sum_{\substack{F\in\faces(\tri_{\ell+m})
       \\  F\not\subseteq \partial T}}
     h_F^{-1} \norm{[v_{\ell+m}]_F}_{L^2(F)}^2
\\
&
+  \norm{ h_T D_\nc (1-\Imorl_\ell)v_{\ell+m}}_{L^2(T)}^2  .
\end{aligned}
\end{equation*}
For any edge $G\in\faces(T)$, the H\"older and Friedrichs inequalities
prove that
\begin{equation*}
 \begin{aligned}
 \left\vert\int_{G} (1-\Imorl_\ell)v_{\ell+m} \,ds \right\vert
 & 
 \lesssim 
 h_G^{1/2} \norm{(1-\Imorl_\ell)v_{\ell+m}}_{L^2(G)}
 \\
 &
 \lesssim
 h_G^{3/2} \norm{\partial (1-\Imorl_\ell)v_{\ell+m} / \partial\tau_G}_{L^2(G)} .
 \end{aligned}
\end{equation*}
(Note that $v_{\ell+m}$ is differentiable
  and continuous along $G$.)
The discrete trace inequality from Proposition~\ref{p:DiscrTrace}
proves that this is controlled by some constant times
\begin{equation*}
\begin{aligned}
 h_T \norm{D_\nc (1-\Imorl_\ell)v_{\ell+m}}_{L^2(T)}
 &+ h_T^2 \norm{D_\nc^2 (1-\Imorl_\ell)v_{\ell+m}}_{L^2(T)}
 \\
 &+ h_T^2 \sqrt{
   \sum_{\substack{F\in\faces(\tri_{\ell+m}) 
                   \\  F\not\subseteq \partial T}}
   h_F^{-1}\norm{[D_\nc v_{\ell+m}]_F}_{L^2(F)}^2   }  .
\end{aligned}
\end{equation*}
For any face $F\in\faces(\tri_{\ell+m})$
with $ F\not\subseteq \partial T$,
the Friedrichs and Poincar\'e inequality prove that
\begin{equation*}
 h_F^{-1}\norm{[v_{\ell+m}]_F}_{L^2(F)}^2 
 \lesssim
 h_F \norm{[D_\nc v_{\ell+m}]_F\tau_F}_{L^2(F)}^2 
 \lesssim
  h_F^3 \norm{[D^2_\nc v_{\ell+m}]_F\tau_F}_{L^2(F)}^2 .
\end{equation*}
Altogether,
\begin{equation*}
 \begin{aligned}
 \norm{(1-\Imorl_\ell)v_{\ell+m}}_{L^2(T)}
&
\lesssim
 h_T \norm{D_\nc (1-\Imorl_\ell)v_{\ell+m}}_{L^2(T)}
 + h_T^2 \norm{D_\nc^2 (1-\Imorl_\ell)v_{\ell+m}}_{L^2(T)}
\\
&\qquad
 + h_T^{2} \sqrt{
   \sum_{\substack{F\in\faces(\tri_{\ell+m}) 
                   \\  F\not\subseteq \partial T}}
   h_F \norm{[D_\nc^2 v_{\ell+m}]_F \tau_F}_{L^2(F)}^2   }  .
 \end{aligned}
\end{equation*}
The discrete Friedrichs inequality
\citep[Thm.~10.6.12]{BrennerScott2008}
together with a scaling argument imply
\begin{equation*}
 h_T \norm{D_\nc (1-\Imorl_\ell)v_{\ell+m}}_{L^2(T)}
 \lesssim
 h_T^2 \norm{D_\nc^2 (1-\Imorl_\ell)v_{\ell+m}}_{L^2(T)} .
\end{equation*}

For the estimate of the jump terms
let $F=\operatorname{conv}\{z_1,z_2\}\in\faces(\tri_{\ell+m})$
be the convex hull of the vertices $z_1$, $z_2$ 
such that $F$ is an interior edge and denote, 
for $j\in\{1,2\}$, 
by $\varphi_j \in \mathcal P_1(\tri_{\ell+m})$ 
the piecewise affine function
with $\varphi_j(z_j)=1$ and $\varphi_j(y) = 0$ for all
$y\in\mathcal N(\tri_{\ell+m})\setminus\{z_j\}$.
The piecewise quadratic edge-bubble function
$\bm\flat_F := 6 \varphi_1\varphi_2\in H^1_0(\omega_F)$
satisfies
\begin{align*}
\norm{\bm\flat_F}_{L^\infty(T)} = 3/2
\quad\text{ and }\quad\int_F \bm\flat_F\,ds = h_F.
\end{align*}
Define 
$
 \psi_F:=(\bm\flat_F [D^2_\nc v_{\ell+m}]_F\tau_F)
 \in H_0^1(\omega_F;\mathbb R^2)
$.
Since $[D^2_\nc v_{\ell+m}]_F$ is constant along $F$, it follows that
\begin{equation*}
 \norm{[D^2_\nc  v_{\ell+m}]_F\tau_F}_{L^2(F)}^2
 =\norm{\bm\flat_F^{1/2}[D_\nc^2 v_{\ell+m}]_F\tau_F}_{L^2(F)}^2  .
\end{equation*}
For any $v\in H^2(\omega_F)$, an integration by parts and 
the $L^2$-orthogonality of $\Curl\psi_F$ on $D^2 v$
reveal that
\begin{align*}
 \norm{\bm\flat_F^{1/2}[D_\nc^2 v_{\ell+m}]_F\tau_F}_{L^2(F)}^2
 = \int_F \left([D_\nc^2 v_{\ell+m}]_F\tau_F\right)\cdot  \psi_F\,ds
 = (D_\nc^2 (v_{\ell+m} - v),\Curl\psi_F)_{L^2(\omega_F)} .
\end{align*}
The Cauchy and inverse inequalities prove that this is bounded by
\begin{align*}
\norm{D^2_\nc (v_{\ell+m} - v)}_{L^2(\omega_F)} 
           \norm{\Curl\psi_F}_{L^2(\omega_F)}
\lesssim \norm{D^2_\nc(v_{\ell+m}-v)}_{L^2(\omega_F)}
            \abs{ [D_\nc^2 v_{\ell+m}]_F\tau_F}.
\end{align*}
This implies
\begin{equation*}
 h_F \norm{[D^2_\nc v_{\ell+m}]_F\tau_F}_{L^2(F)} ^2
\lesssim
\min_{v\in H^2(\operatorname{int}(T))}
               \norm{D^2_\nc(v_{\ell+m}-v)}_{L^2(\omega_F)}^2. 
\end{equation*}
The sum over all interior edges of $\faces(\tri_{\ell+m})$
and the finite overlap of edge-patches prove the result.
\end{proof}

\subsection{Conforming Companion Operator}\label{ss:companion}
This subsection is devoted to the design of 
a new conforming companion operator. In contrast to the operators
introduced in \citep{CarstensenGallistlSchedensack2014,MaoShi2010},
$H^2$ conformity is required.
Compared to certain averaging operators that can be found
in the literature \citep{BrennerGudiSung2010,Gudi2010},
the proposed companion operator has additional conservation
properties for the integral mean and the integral mean of
the Hessian. 
A similar approach has been independently developed in
\citep{LiGuanMao2014}. In contrast to that work, the operator
presented here satisfies an additional best-approximation property.

The Hsieh-Clough-Tocher (HCT) finite element \citep{Ciarlet1978}
enters the design of a conforming companion operator.
Let any $T\in\tri_\ell$ be decomposed into three sub-triangles as
depicted in Figure~\ref{sf:FEMs:HCT}, where the vertex shared by
the three sub-triangles is the midpoint $\operatorname{mid}(T)$.
Given this triangulation $\mathcal K_\ell (T)$ of $T$, let
\begin{equation*}
 V_{\mathrm{HCT}}(\tri_\ell) :=
 \left\{ v\in V
 \left|
   v|_T \in \mathcal P_3(\mathcal K_\ell(T))
    \text{ for all } T\in\tri_\ell
 \right. 
 \right\} .
\end{equation*}
The local degrees of freedom on each triangle $T$ are the nodal
values of the function and its derivative and
the value of the normal derivative at the midpoints of 
the edges of $T$ in Figure~\ref{sf:FEMs:HCT}.

Such conforming
finite elements turn out to be useful for the theoretical analysis.
The following proposition presents a simple averaging operator,
similar to that of \citep{BrennerGudiSung2010,Gudi2010},
for the case of more general boundary contitions.

\begin{proposition}[HCT enrichment]\label{p:HCTenrichment}
There exists an operator 
$\mathcal A :V_\ell \to V_{\mathrm{HCT}}(\tri_\ell)$
such that any $v_\ell\in V_\ell$ satisfies
\begin{equation*}
 \begin{aligned}
 \norm{h_\ell^{-2}(v_\ell - \mathcal A v_\ell)}_{L^2(\Omega)}^2
 &
 \lesssim
 \sum_{F\in\faces_\ell(\Omega\cup\Gamma_C)} h_F \norm{[D^2 v_\ell]_F\tau_F}_{L^2(F)}^2
+
 \sum_{F\in\faces_\ell(\Gamma_S)} h_F \norm{\tau_F\cdot[D^2 v_\ell]_F\tau_F}_{L^2(F)}^2
 \\
 &
 \lesssim
 \min_{v\in V} \norm{D^2_\nc(v_\ell - v)}_{L^2(\Omega)} .
 \end{aligned}
\end{equation*}
\end{proposition}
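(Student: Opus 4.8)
The plan is to construct $\mathcal A$ as a nodal averaging (Oswald-type) operator on the HCT element and to prove the two bounds separately. First I would fix the degrees of freedom of $V_{\mathrm{HCT}}(\tri_\ell)$: at each interior vertex $z$ the function value, both first derivatives, and at each interior edge midpoint the normal derivative (with the obvious homogeneous prescriptions on $\Gamma_C$ and $\Gamma_S$). I would then define $\mathcal A v_\ell$ by averaging the values the piecewise smooth function $v_\ell$ (and $D_\nc v_\ell$, where needed) takes from the adjacent triangles at each such degree of freedom; on $\Gamma_C$ all degrees of freedom are set to zero, on $\Gamma_S$ the function value is set to zero while the tangential derivative along the boundary is forced to be zero by the constraint and only the normal-derivative / second-order information survives, and on $\Gamma_F$ we simply take the one-sided value. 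Because the Morley function is already continuous at interior vertices and its normal derivative has continuous edge-midpoint values, several of these averages are in fact exact, which is what makes the jump terms on the right-hand side only second-order (the Hessian jump), not lower order.

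Next I would carry out the standard localisation: since $v_\ell - \mathcal A v_\ell$ vanishes (in the appropriate sense) unless a degree of freedom is "touched" by averaging, on each triangle $T$ one writes $v_\ell - \mathcal A v_\ell$ as a sum over the finitely many local HCT basis functions attached to the vertices and edges of $T$, weighted by the jumps of the averaged quantities. A scaling argument on the reference element bounds $\norm{h_T^{-2}(v_\ell-\mathcal A v_\ell)}_{L^2(T)}^2$ by $h_T^{-2}$ times the squared differences of the degree-of-freedom values across neighbouring triangles, suitably scaled: a vertex-value mismatch contributes $h^{-2}\cdot h^2\,\abs{[v_\ell](z)}^2$, a first-derivative mismatch at a vertex contributes $\abs{[D_\nc v_\ell](z)}^2$, and a normal-derivative midpoint mismatch contributes $h_F^{-1}\norm{[\partial v_\ell/\partial\nu_F]_F}^2$-type terms. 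Using that $v_\ell$ is continuous at interior vertices, that $[D_\nc v_\ell]_F$ is continuous at the edge midpoint of every interior edge, and that on $\Gamma_S$ the constraint kills the tangential part, every one of these terms is re-expressed via a one-dimensional trace/Poincaré inequality along the edge (as already done inside the proof of Proposition~\ref{p:IlEstimate}) in terms of $h_F\norm{[D^2 v_\ell]_F\tau_F}_{L^2(F)}^2$ on interior and $\Gamma_C$ edges, and $h_F\norm{\tau_F\cdot[D^2 v_\ell]_F\tau_F}_{L^2(F)}^2$ on $\Gamma_S$ edges. Summing over $T\in\tri_\ell$ with the finite overlap of edge patches gives the first inequality.

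For the second inequality I would bound the edge-jump sum by $\min_{v\in V}\norm{D^2_\nc(v_\ell-v)}_{L^2(\Omega)}^2$ using the edge-bubble argument from the end of the proof of Proposition~\ref{p:IlEstimate}: for an interior edge $F$, the bubble function $\bm\flat_F$ and $\psi_F:=\bm\flat_F[D^2_\nc v_\ell]_F\tau_F$ give $h_F\norm{[D^2_\nc v_\ell]_F\tau_F}_{L^2(F)}^2\lesssim\norm{D^2_\nc(v_\ell-v)}_{L^2(\omega_F)}^2$ for any $v\in H^2(\omega_F)$; taking $v\in V$ globally and summing over interior and $\Gamma_C$ edges handles those terms, while on a $\Gamma_S$ edge only $\tau_F\cdot[D^2_\nc v_\ell]_F\tau_F$ appears and one tests with the scalar-valued bubble $\bm\flat_F\,\tau_F\cdot[D^2_\nc v_\ell]_F\tau_F$, exploiting that the boundary conditions defining $V$ impose nothing on the normal–normal second derivative on $\Gamma_S$, so no boundary term obstructs the integration by parts. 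The main obstacle I anticipate is the bookkeeping at the boundary: one must check that the averaging is \emph{well-defined and $V$-conforming}, i.e. that $\mathcal A v_\ell$ actually lies in $V\cap V_{\mathrm{HCT}}$ with the correct homogeneous conditions on $\Gamma_C\cup\Gamma_S$ and $\partial\nu$ on $\Gamma_C$, and that exactly the right components of the Hessian jump survive on $\Gamma_S$ versus $\Gamma_C$; the interior estimate, by contrast, is a routine scaling-and-bubble computation once the framework from Proposition~\ref{p:IlEstimate} is in place.
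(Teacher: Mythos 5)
Your proposal matches the paper's proof in all essentials: the same averaging (Oswald-type) definition of $\mathcal A$ on the HCT degrees of freedom with exact matching of vertex values and edge-midpoint normal derivatives, the same localisation via scaled HCT basis functions reducing everything to vertex jumps of $D_\nc v_\ell$, the same conversion of those jumps into Hessian-jump edge terms by one-dimensional Poincar\'e inequalities, and the same edge-bubble argument for efficiency. The boundary bookkeeping you flag as the main obstacle is indeed where the paper spends its care (distinguishing $\Gamma_S$ vertices with angle $\pi$ from corners, where the simply supported condition forces the full derivative to vanish), but your outline handles it correctly.
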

\begin{proof}
Given $v_\ell \in V_\ell$, define 
$\mathcal A v_\ell \in V_{\mathrm{HCT}}(\tri_\ell)$ by setting the
degrees of freedom as follows
\begin{equation*}
 \begin{aligned}
   (v_\ell- \mathcal A v_\ell) (z) 
 &
  = 0
 &&\quad\text{for all } z\in\mathcal N_\ell,
  \\
  \frac{\partial(v_\ell -\mathcal A v_\ell)}{\partial\nu_F} (\operatorname{mid}(F))
  &
   = 0 
  &&\quad\text{for all } F\in\faces_\ell,
 \\
   D(\mathcal A v_\ell) (z) 
  &
    = \card(\tri_\ell(z))^{-1} \sum_{T\in\tri_\ell(z)}
      (D v_\ell|_T)(z)
  &&\quad\text{for all } z\in\mathcal N_\ell(\Omega\cup\Gamma_F).
 \end{aligned}
\end{equation*}
In other words, the degrees of freedom are defined by
averaging.
For the remaining vertices on the boundary, set
\begin{equation*}
D(\mathcal A v_\ell)(z) 
= 0 \quad \text{for all } z\in\mathcal N_\ell(\Gamma_S)
          \text{ with angle }  \neq \pi
          \text{ and all }z\in\mathcal N_\ell(\Gamma_C)
\end{equation*}
and, for all $z\in\mathcal N_\ell(\Gamma_S)$ with angle $= \pi$,
\begin{equation*}
\frac{\partial \mathcal A v_\ell}{\partial\tau}(z) 
= 0 
\quad\text{and}\quad
\frac{\partial \mathcal A v_\ell}{\partial\nu}(z)
= (\card(\tri_\ell(z)))^{-1}
    \sum_{F\in\{F_+,F_-\}}
             \left.\frac{\partial v_\ell}{\partial\nu(z)}\right|_F (z) 
\end{equation*}
where $(F_+,F_-)\in\faces_\ell(\Gamma_S)^2$ are the two boundary
edges sharing $z$.
Note that, for corners of the domain $\Omega$ with angle $\neq \pi$,
the simply supported boundary condition implies that the
full derivative vanishes at $z$.

The remaining part of the proof is devoted to the error estimate
for $\mathcal A$.
For a multi-index $\alpha$ of length $\abs{\alpha} =1$ and
any vertex $z\in\mathcal N_\ell$, let $\psi_{z,\alpha}$ 
denote the nodal basis function of $V_{\mathrm{HCT}}(\tri_\ell)$ with 
$(\partial\psi_{z,\alpha}/\partial x^{\alpha}) (z)=1$
that vanishes for the remaining degrees of freedom of the
HCT finite element.
Since the HCT finite element
is a finite element in the sense of \citet{Ciarlet1978},
for any $T\in\tri_\ell$ the function $v_\ell|_T\in \mathcal P_2(T)$
can be represented by means of the local HCT basis functions.
By definition of $\mathcal A$,
the difference $v_\ell - \mathcal A v_\ell$ can be represented
as follows
\begin{equation*}
  \norm{h_\ell^{-2}(v_\ell-\mathcal A v_\ell)}_{L^2(\Omega)}^2 
 = \sum_{T\in\tri_\ell}
   \bigg\Vert h_T^{-2}\sum_{z\in\mathcal N(T)} \sum_{\abs{\alpha}=1}
	\frac{\partial^{\abs{\alpha}} (v_\ell|_T - \mathcal A v_\ell)}
             {\partial x^\alpha} (z)
        \psi_{z,\alpha}
   \bigg\Vert _{L^2(T)}^2 .
\end{equation*}
For any $T\in\tri_\ell$, the scaling of the basis functions
\citep[Thm.~6.3.1, p.~344]{Ciarlet1978} reads as
\begin{equation*}
 \norm{h_T^{-2}\psi_{z,\alpha}}_{L^2(T)} \lesssim 1 
     \text{ for }\abs{\alpha} = 1.
\end{equation*}
Thus, the triangle inequality implies that
\begin{equation*}
  \norm{h_\ell^{-2}(v_\ell-\mathcal A v_\ell)}_{L^2(\Omega)}^2 
  \lesssim
   \sum_{T\in\tri_\ell}
    \sum_{z\in\mathcal N(T)} 
     \abs{ D (v_\ell|_T -\mathcal A v_\ell)(z)}^2 .
\end{equation*}
The triangle inequality and equivalence of seminorms prove, for any
vertex $z\in\mathcal N_\ell(\Omega\cup\Gamma_F)$, that
\begin{equation}\label{e:AverageTriEq}
 \abs{ D (v_\ell|_T -\mathcal A v_\ell)(z)} ^2
 \lesssim
 \sum_{F\in\faces_\ell(z)\cap\faces_\ell(\Omega)} [D_\nc v_\ell (z)]_F^2
 \lesssim
 \sum_{F\in\faces_\ell(z)\cap\faces_\ell(\Omega)}h_F^{-1} 
   \norm{[D_\nc v_\ell]_F}_{L^2(F)}^2  .
\end{equation}
For any vertex $z\in\mathcal N_\ell(\Gamma_C)$ and any triangle
$T$ with $z\in T$ the definition of $\mathcal A$ implies
\begin{equation*}
 \abs{ (D_\nc v_\ell|_T - \mathcal A v_\ell )(z)}
 =\abs{D v_\ell|_T(z)}.
\end{equation*}
Any vertex $z\in\mathcal N_\ell(\Gamma_S)$ and any triangle
$T$ with $z\in T$ satisfy
\begin{equation*}
 \abs{(\partial (v_\ell|_T - \mathcal A v_\ell) / \partial\tau )(z)}
=\abs{(\partial v_\ell |_T/\partial\tau )(z)}
\end{equation*}
and, as in \eqref{e:AverageTriEq}, it follows
in the case that the angle at $z$ equals $\pi$, that
\begin{equation*}
 \abs{(\partial (v_\ell|_T - \mathcal A v_\ell) / \partial\nu )(z)}
\lesssim
\sum_{F\in\faces_\ell(z)\cap\faces_\ell(\Omega)} 
  \abs{[\partial v_\ell /\partial \nu_F]_F (z)}   .
\end{equation*}
Equivalence of norms and
Poincar\'e inequalities along $F\in\faces_\ell$ prove
\begin{equation*}
\begin{aligned}
\abs{\left[\partial v_\ell/\partial\tau_F\right]_F(z)}
&
\lesssim h_F^{-1/2}\norm{\left[\partial v_\ell/\partial\tau_F\right]_F}_{L^2(F)}
\lesssim h_F^{1/2}\norm{\tau_F\cdot \left[D^2_\nc v_\ell\right]_F\tau_F}_{L^2(F)}
,\\
\abs{\left[\partial v_\ell/\partial\nu_E\right]_E(z)}
&
\lesssim h_F^{-1/2}\norm{\left[\partial v_\ell/\partial\nu_F\right]_F}_{L^2(F)}
\lesssim h_F^{1/2}\norm{\nu_F\cdot \left[D^2_\nc v_\ell\right]_F\tau_F}_{L^2(F)}
.
\end{aligned}
\end{equation*}
This proves the first inequality of the proposition.

The proof of the efficiency estimate can be carried out
by using the bubble function technique
from the proof of Proposition~\ref{p:IlEstimate}.
\end{proof}

\begin{proposition}[companion operator]\label{p:companion}
For any $v_\ell\in V_\ell$ there exists some
$
 \mathcal{C}v_\ell\in V
$
such that $v_\ell-\mathcal{C}v_\ell$ and its second-order
partial derivatives are $L^2$-orthogonal on
the space $\mathcal P_0(\tri_\ell)$ of piecewise constants,
\begin{align}\label{e:CompanionProj}
 \Pi_\ell^0 (v_\ell - \mathcal C v_\ell) = 0
 \quad\text{and}\quad
 \Pi_\ell^0 (D_\nc^2(v_\ell-\mathcal{C}v_\ell))=0.
\end{align}
Moreover, the operator $\mathcal C$%
satisfies the approximation and stability
property
\begin{equation}\label{e:CompanionApproxStab}
\begin{aligned}
&
\norm{h_\ell^{-2}  (v_\ell-\mathcal{C} v_\ell)}_{L^2(\Omega)}
+\norm{h_\ell^{-1} D_\nc  (v_\ell-\mathcal{C} v_\ell)}_{L^2(\Omega)}
+\norm{D^2_\nc (v_\ell-\mathcal{C} v_\ell)}_{L^2(\Omega)}
\\
& \qquad\qquad\qquad\qquad\qquad\qquad\qquad\qquad\qquad\qquad
\lesssim \min_{v\in V}\norm{D^2_\nc (v_\ell-v)}_{L^2(\Omega)}.
\end{aligned}
\end{equation}
\end{proposition}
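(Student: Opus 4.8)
The plan is to build $\mathcal{C}$ in two stages: first enrich $v_\ell$ into the HCT space via the operator $\mathcal{A}$ from Proposition~\ref{p:HCTenrichment}, and then correct $\mathcal{A}v_\ell$ by a bubble-function term so as to enforce the two integral-mean constraints in \eqref{e:CompanionProj}, all the while preserving membership in $V$ and the boundary conditions. Concretely, I would set $\mathcal{C}v_\ell := \mathcal{A}v_\ell + B(v_\ell)$ where $B(v_\ell)\in V$ is a finite-element correction, supported interior to each triangle $T\in\tri_\ell$, built from $H^2_0(T)$-bubble functions of sufficiently high polynomial degree (e.g.\ a bubble $\bm\flat_T$ and its first derivatives, or products with the barycentric coordinates), so that $B(v_\ell)|_T$ and its Hessian can realize any prescribed values of $\fint_T(v_\ell-\mathcal{A}v_\ell)$ and $\fint_T D_\nc^2(v_\ell-\mathcal{A}v_\ell)$ while vanishing to second order on $\partial T$. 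Since these bubbles and their gradients vanish on $\partial T$, adding $B(v_\ell)$ does not disturb $H^2$-conformity or any of the essential boundary conditions, so $\mathcal{C}v_\ell$ stays in $V$; and on each $T$ one solves a small fixed linear system (of size $1+3=4$, using symmetry of the Hessian) to pick the bubble coefficients, with the local solvability being a statement about a fixed reference triangle and hence a constant independent of the mesh.

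The verification of \eqref{e:CompanionProj} is then immediate by construction. For the approximation-stability estimate \eqref{e:CompanionApproxStab}, I would argue term by term. The HCT-enrichment error $\norm{h_\ell^{-2}(v_\ell-\mathcal{A}v_\ell)}_{L^2(\Omega)}$ is already controlled by $\min_{v\in V}\norm{D_\nc^2(v_\ell-v)}_{L^2(\Omega)}$ by Proposition~\ref{p:HCTenrichment}, and from it one recovers the gradient and Hessian terms for $v_\ell-\mathcal{A}v_\ell$ by inverse estimates on $V_\ell+V_{\mathrm{HCT}}$ (both piecewise polynomial of bounded degree). For the bubble correction $B(v_\ell)$, a scaling argument on the reference element gives $\norm{h_T^{-2}B(v_\ell)}_{L^2(T)}\lesssim \abs{\fint_T(v_\ell-\mathcal{A}v_\ell)} + h_T^2\,\abs{\fint_T D_\nc^2(v_\ell-\mathcal{A}v_\ell)}$, and the right-hand side is bounded, via Cauchy--Schwarz, by $\norm{h_T^{-2}(v_\ell-\mathcal{A}v_\ell)}_{L^2(T)} + \norm{D_\nc^2(v_\ell-\mathcal{A}v_\ell)}_{L^2(T)}$; the analogous bounds for $D_\nc B(v_\ell)$ and $D_\nc^2 B(v_\ell)$ follow by the same scaling with inverse estimates. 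Summing over $T\in\tri_\ell$ and invoking Proposition~\ref{p:HCTenrichment} once more closes the estimate. A triangle inequality $\norm{D_\nc^2(v_\ell-\mathcal{C}v_\ell)}\le\norm{D_\nc^2(v_\ell-\mathcal{A}v_\ell)}+\norm{D^2 B(v_\ell)}$ and the above then yields the stated bound with the same minimum on the right.

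The main obstacle I anticipate is the construction of the bubble correction near the \emph{boundary}, in particular at vertices of $\Gamma_S$ with interior angle $\pi$ and at corners of $\Omega$: one must ensure that fixing $\fint_T D_\nc^2$-values does not force a nonzero normal or tangential derivative on a boundary edge where $V$ demands it vanish, and that the local linear system for the bubble coefficients remains solvable with a mesh-independent bound for every triangle touching such a boundary piece. Using $H^2_0(T)$-bubbles (vanishing together with their full gradient on all of $\partial T$, not merely on $\partial\Omega\cap\partial T$) sidesteps the boundary-condition issue entirely, so the only real point to check is that three cubic-or-higher interior bubbles on the reference triangle have linearly independent Hessian integral means together with one bubble of nonzero integral mean — a finite, explicit computation. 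A secondary, purely bookkeeping difficulty is confirming that the inverse estimates used to pass between the $L^2$, $H^1_\nc$, and $H^2_\nc$ norms are legitimate on $V_\ell+V_{\mathrm{HCT}}(\tri_\ell)+\{\text{bubbles}\}$; this holds because all three are spaces of piecewise polynomials of degree bounded independently of $\ell$, on shape-regular meshes.
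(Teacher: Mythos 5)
There is a genuine gap at the heart of your construction: the Hessian integral-mean constraint $\Pi_\ell^0 D_\nc^2(v_\ell-\mathcal Cv_\ell)=0$ \emph{cannot} be enforced by element-interior $H^2_0(T)$-bubbles. For any $b\in H^2_0(\operatorname{int}(T))$, an integration by parts gives
\begin{equation*}
 \int_T \frac{\partial^2 b}{\partial x_i\partial x_j}\,dx
 = \int_{\partial T} \frac{\partial b}{\partial x_i}\,\nu_j\,ds = 0 ,
\end{equation*}
because $Db$ vanishes on $\partial T$. Hence all three Hessian integral-mean functionals vanish identically on your proposed bubble space, the $4\times 4$ local system you want to solve has rank one (only the functional $\fint_T\bullet\,dx$ is controllable), and the ``finite, explicit computation'' you defer to would reveal exactly this degeneracy. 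The quantity $\fint_T D^2 w\,dx$ is pure boundary data of $w|_{\partial T}$, so any correction that changes it must have nontrivial trace or normal derivative on $\partial T$.

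This is precisely why the paper's proof works edge-wise rather than element-wise. After the HCT enrichment $\mathcal A$ (your Step 1 and the error bound via Proposition~\ref{p:HCTenrichment} plus inverse estimates are fine and agree with the paper), the paper adds, for each edge $F$, a $C^1$ edge bubble $\zeta_F$ supported on the edge patch $\overline{\omega_F}$ with $\fint_F\partial\zeta_F/\partial\nu_F\,ds=1$, scaled by $\fint_F\partial(v_\ell-\mathcal Av_\ell)/\partial\nu_F\,ds$. This matches the integral means of the normal derivatives on all edges; since the tangential-derivative integrals $\int_F\partial(v_\ell-\mathcal Av_\ell)/\partial\tau_F\,ds$ already vanish by the nodal interpolation, the identity $\int_T D^2w\,dx=\int_{\partial T}Dw\otimes\nu\,ds$ then yields $\Pi_\ell^0 D^2\widetilde{\mathcal A}=D^2_\nc$. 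Only afterwards is a genuine $H^2_0(T)$ volume bubble used, and only to fix $\Pi_\ell^0(v_\ell-\mathcal Cv_\ell)=0$ — which it can do, and which does not disturb the Hessian means for the reason above. Your final concern about boundary conditions is also resolved differently than you anticipate: the edge corrections are applied only for $F\in\faces(\Omega\cup\Gamma_S\cup\Gamma_F)$, and on $\Gamma_C$ the normal-derivative means already agree by the definition of $\mathcal A$ and the Morley boundary conditions. To repair your argument you would have to replace the interior bubbles by boundary-active (edge-patch) corrections of exactly this kind.
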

\begin{proof}
 The design follows in three steps.
 
{\em Step 1.}
Proposition~\ref{p:HCTenrichment} 
and inverse estimates \citep{BrennerScott2008}
 prove for the operator $\mathcal A$ that
\begin{equation*}
\begin{aligned}
  \norm{h_\ell^{-2} (v_\ell-\mathcal A v_\ell)}_{L^2(\Omega)}
+ \norm{h_\ell^{-1} D_\nc (v_\ell-\mathcal A v_\ell)}_{L^2(\Omega)}
&
+ \norm{D^2_\nc(v_\ell-\mathcal A v_\ell)}_{L^2(\Omega)}
\\
 & \qquad\qquad
\lesssim \min_{v\in V}\norm{D^2_\nc(v_\ell-v)}_{L^2(\Omega)}.
\end{aligned}
\end{equation*}

{\em Step 2.}
Let $T=\operatorname{conv}\{z_1,z_2,z_3\}$ be a triangle of $\tri_\ell$
and let $F\in\faces(T)$ with $F=\operatorname{conv}\{z_1,z_2\}$ and
denote  the continuous nodal $\mathcal P_1$  basis functions by
$
\varphi_1,\varphi_2, \varphi_3
     \in \mathcal P_1(\tri_\ell)\cap H^1(\Omega)
$.
Let $\nu_T$ denote the outward pointing unit normal of $T$
and define the function $\zeta_{F,T}$ by
\begin{equation*}
\zeta_{F,T} :=  30 (\nu_T\cdot\nu_F)
      \operatorname{dist}(z_3,F)\varphi_1^2\varphi_2^2\varphi_3.
\end{equation*}
For any $F\in\faces_\ell$, the function
\begin{equation*}
 \zeta_F := \begin{cases}
             \zeta_{F,K} &\text{on triangles } K\in\tri_\ell 
                              \text{ with } F\in\faces(K),\\
             0 &\text{otherwise}
            \end{cases}
\end{equation*}
satisfies $\zeta_F \in H^2(\Omega)$ and 
$\operatorname{supp}(\zeta_F)=\overline{\omega_F}$
as well as $\fint_F \partial \zeta_F / \partial \nu_F\,dx =1$.
For the proof that $\zeta_F$ is continuously differentiable across
interior edges $F$, note that any adjacent triangle $T$ satisfies
$D\varphi_3|_T = (\operatorname{dist}(z_3,F))^{-1}\nu_T$ as well
as
\begin{equation*}
 (D \zeta_{F,T} )|_F \nu_F 
 = 30 (\nu_T\cdot\nu_F) \operatorname{dist}(z_3,F)
   \varphi_1^2\varphi_2^2 (D\varphi_3\nu_F)
= 30 \varphi_1^2\varphi_2^2 .
\end{equation*}
Hence, $\zeta_F\in H^2(\Omega)$.

If $F\in\faces_\ell(\Omega)$, it holds that
$\zeta_F\in H^2_0(\omega_F)$.
Define the operator
$\widetilde{\mathcal{A}}:  V_\ell \to V$
which acts as
\begin{equation*}
\widetilde{\mathcal{A}} v_\ell
  := \mathcal A v_\ell 
  + \sum_{F\in\faces(\Omega\cup\Gamma_S\cup\Gamma_F)} 
 \left( \fint_F \frac{\partial (v_\ell-\mathcal A v_\ell)}{\partial\nu_F}\,ds
 \right) \zeta_F .   
\end{equation*}
An immediate consequence of this choice reads as
\begin{equation*}
 \fint_F \partial \widetilde{\mathcal{A}} v_\ell / \partial\nu_F\,ds  
 = \fint_F \partial v_\ell /\partial\nu_F\,ds 
\quad\text{for all } F\in\faces_\ell.
\end{equation*}
An integration by parts shows the integral mean property
of the Hessian $\Pi_\ell^0 D^2 \widetilde{\mathcal{A}} = D^2_\nc$.
The scaling $\norm{\zeta_F}_{L^2(T)}\lesssim h_T^2$
and the trace inequality \citep{CarstensenFunken2000,DiPietroErn2012}
prove, for any $T\in\tri_\ell$, that
\begin{align*}
&h_T^{-2} 
  \Big\Vert 
   \sum_{F\in\faces(T)}
     \left( \fint_F \frac{\partial (v_\ell-\mathcal A v_\ell)}{\partial\nu_F}\,ds
     \right) \zeta_F   \Big\Vert_{L^2(T)}
\\
& \qquad
\lesssim 
\sum_{F\in\faces(T)} \Big\vert
       \fint_F \frac{\partial (v_\ell-\mathcal A v_\ell)}{\partial\nu_F}\,ds
        \Big\vert
\\
& \qquad
\lesssim
h_T^{-1}\norm{D_\nc(v_\ell-\mathcal A v_\ell)}_{L^2(T)}  
   + \norm{D^2_\nc(v_\ell-\mathcal A v_\ell)}_{L^2(T)} .
\end{align*}
This together with the first step of the proof and
 inverse estimates \citep{BrennerScott2008} show that
\begin{equation}\label{e:TildeAestimate}
\begin{aligned}
\norm{h_\ell^{-2} (v_\ell- \widetilde{\mathcal{A}} v_\ell)}_{L^2(\Omega)}
&
+\norm{h_\ell^{-1}D_\nc (v_\ell- \widetilde{\mathcal{A}} v_\ell)}_{L^2(\Omega)}
+ \norm{D^2_\nc(v_\ell- \widetilde{\mathcal{A}} v_\ell)}_{L^2(\Omega)}
\\ 
& \qquad\qquad\qquad\qquad\qquad\qquad\qquad
\lesssim 
\min_{v\in V}\norm{D^2_\nc(v_\ell-v)}_{L^2(\Omega)}.
\end{aligned}
\end{equation}

{\em Step 3.}
On any triangle $T=\operatorname{conv}\{z_1,z_2,z_3\}$
with nodal basis functions 
$\varphi_1,\varphi_2,\varphi_3\in\mathcal P_0(T)$,
the volume bubble function is defined as
\begin{equation*}
\tilde{\bm\flat}_{T} := 2520\,\varphi_1^2\varphi_2^2\varphi_3^2
\in H^2_0(\operatorname{int}(T))
\end{equation*}
and satisfies $\fint_T\tilde{\bm\flat}_{T}\,dx = 1$.
Define
\begin{align*}
\mathcal{C}v_\ell
 :=\tilde{\mathcal{A}} v_\ell +\sum_{T\in\tri_\ell}
  \left(\fint_T(v_\ell-\tilde{\mathcal{A}} v_\ell)\,dx\right) \tilde{\bm\flat}_{T}.
\end{align*}
The difference $v_\ell-\mathcal{C}v_\ell$ is $L^2$ orthogonal to all
piecewise constant functions.
Since $\tilde{\bm\flat}_T$ vanishes on $F\in\faces_\ell$,
$\mathcal{C}$ enjoys the integral mean property
$\Pi_\ell^0 D^2 \mathcal{C}= D^2_\nc$.
The fact that
$\norm{\tilde{\bm\flat}_T}_{L^\infty(T)}\lesssim 1$
and the H\"older inequality prove
\begin{equation*}
 \left\| 
  \fint_T(v_\ell-\tilde{\mathcal{A}}v_\ell)\,dx\;\tilde{\bm\flat}_T\right\|_{L^2(T)}
\lesssim \norm{v_\ell-\tilde{\mathcal{A}} v_\ell}_{L^2(T)}.
\end{equation*}
Hence, the triangle inequality, 
\eqref{e:TildeAestimate} and inverse estimates prove the
claimed error estimate for $\mathcal C$.
\end{proof}

\begin{remark}
 The operator $\mathcal C$ maps into a discrete space, namely
 the sum of $V_{\mathrm{HCT}}(\tri_\ell)$ and 
  $\mathcal P_6(\tri_\ell)\cap V$.
\end{remark}

\begin{corollary}[discrete Poincar\'e-Friedrichs inequality for Morley functions]
 \label{c:dFMorley}
There exists a positive constant $C_{dF}$ such that
any $v_\ell \in V_\ell$ satisfies
\begin{equation*}
  \norm{v_\ell} 
\leq C_{dF} \operatorname{diam}(\Omega)^2 \ennormnc{v_\ell} .
\end{equation*}

\end{corollary}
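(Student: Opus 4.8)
The plan is to combine the conforming companion operator $\mathcal C$ from Proposition~\ref{p:companion} with the ordinary Poincar\'e--Friedrichs inequality on the continuous space $V$. Recall that $V\cap\mathcal P_1(\Omega)=\{0\}$, so $\ennorm{\cdot}=a(\cdot,\cdot)^{1/2}$ is a genuine norm on $V$; a standard compactness or Poincar\'e-type argument then yields a constant $C_{dF}$ (depending only on $\Omega$, and scaling like $\operatorname{diam}(\Omega)^2$) with $\norm{v}\leq C_{dF}\operatorname{diam}(\Omega)^2\ennorm{v}$ for all $v\in V$. The only work is to transfer this to $v_\ell\in V_\ell$ through the companion.

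The key steps are as follows. First I would apply the continuous Poincar\'e--Friedrichs inequality to $\mathcal C v_\ell\in V$, giving $\norm{\mathcal C v_\ell}\leq C_{dF}\operatorname{diam}(\Omega)^2\ennorm{\mathcal C v_\ell}$. Next, using the triangle inequality in $L^2$, write $\norm{v_\ell}\leq\norm{v_\ell-\mathcal C v_\ell}+\norm{\mathcal C v_\ell}$, and bound $\norm{v_\ell-\mathcal C v_\ell}$ by $\norm{h_\ell^{-2}(v_\ell-\mathcal C v_\ell)}_{L^2(\Omega)}$ times $\max_T h_T^2\lesssim\operatorname{diam}(\Omega)^2$, whence by \eqref{e:CompanionApproxStab} this term is $\lesssim\operatorname{diam}(\Omega)^2\min_{v\in V}\norm{D^2_\nc(v_\ell-v)}_{L^2(\Omega)}\leq\operatorname{diam}(\Omega)^2\ennormnc{v_\ell}$ (taking $v=0\in V$ in the minimum). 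Likewise $\ennorm{\mathcal C v_\ell}=\norm{D^2\mathcal C v_\ell}_{L^2(\Omega)}\leq\norm{D^2_\nc(v_\ell-\mathcal C v_\ell)}_{L^2(\Omega)}+\ennormnc{v_\ell}\lesssim\ennormnc{v_\ell}$ again by \eqref{e:CompanionApproxStab}. Combining these estimates gives $\norm{v_\ell}\lesssim\operatorname{diam}(\Omega)^2\ennormnc{v_\ell}$, which is the claim.

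Alternatively, and perhaps cleaner for tracking the constant, one can use the conservation property \eqref{e:CompanionProj}: since $\Pi_\ell^0(v_\ell-\mathcal C v_\ell)=0$, a piecewise Poincar\'e inequality bounds $\norm{v_\ell-\mathcal C v_\ell}_{L^2(\Omega)}$ by $\max_T h_T\,\norm{D_\nc(v_\ell-\mathcal C v_\ell)}_{L^2(\Omega)}$, then iterate once more with $\Pi_\ell^0 D_\nc^2(v_\ell-\mathcal C v_\ell)=0$; both pieces are controlled by \eqref{e:CompanionApproxStab}. Either route reduces everything to the continuous inequality on $V$ plus the already-proven properties of $\mathcal C$.

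The main obstacle is essentially bookkeeping rather than a genuine difficulty: one must be careful that the generic constants hidden in $\lesssim$ do not secretly depend on $\operatorname{diam}(\Omega)$ in a way that spoils the stated scaling, and that the mesh-size factors $h_T^2\le\operatorname{meas}(T)\le\operatorname{meas}(\Omega)\lesssim\operatorname{diam}(\Omega)^2$ are handled uniformly. Since the constant $C_{dF}$ in the statement is allowed to be generic (and the explicit $\operatorname{diam}(\Omega)^2$ merely records the correct homogeneity), no delicate estimate is actually needed beyond Proposition~\ref{p:companion}, which is why this appears as a corollary.
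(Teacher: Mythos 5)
Your proposal is correct and follows essentially the same route as the paper: the paper's proof is exactly the triangle inequality $\norm{v_\ell}\leq\norm{v_\ell-\mathcal C v_\ell}+\norm{\mathcal C v_\ell}$, with the first term bounded by the approximation property \eqref{e:CompanionApproxStab} and the second by a Poincar\'e--Friedrichs estimate on $V$ together with the stability of $\mathcal C$. Your additional bookkeeping (taking $v=0$ in the minimum, tracking $h_T^2\lesssim\operatorname{diam}(\Omega)^2$) just makes explicit what the paper leaves implicit.
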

\begin{proof}
 The proof follows from the triangle inequality
\begin{equation*}
 \norm{v_\ell}
 \leq
 \norm{v_\ell - \mathcal C v_\ell} + \norm{\mathcal C v_\ell} .
\end{equation*}
The first term on the right-hand side can be bounded via
\eqref{e:CompanionApproxStab} while the second term 
for $\mathcal C v_\ell\in V$ is controlled by a 
Poincare-Friedrichs-type estimate and the stability of the
operator $\mathcal C$.
\end{proof}

\subsection{$L^2$ Error Estimate for the Morley FEM}
\label{ss:Linear}            
This section presents $L^2$ and best-approximation error estimates
for the Morley finite element discretisation of the linear biharmonic
equation. The companion operator from 
Subsection~\ref{ss:companion}
allows the proof of an $L^2$ error estimate for possibly singular
solutions of the biharmonic equation.
Given $f\in L^2(\Omega)$,
the weak formulation seeks $u\in V$ such that
\begin{equation}\label{e:LinProbExact}
  a(u,v) =  b(f,v)
  \quad\text{for all } v\in V.
\end{equation}
Throughout this paper, $0<s\leq 1$ indicates the elliptic regularity
of the solution to \eqref{e:LinProbExact} in the sense that
$\norm{u}_{H^{2+s}(\Omega)}\leq C(s) \norm{f}_{L^2(\Omega)}$.

The Morley finite element discretisation of
\eqref{e:LinProbExact} seeks $u_\ell\in V_\ell$
such that
\begin{equation}\label{e:LinProbDiscr}
  a_\nc(u_\ell,v_\ell) = b(f, v_\ell)
  \quad\text{for all } v_\ell \in V_\ell.
\end{equation}
The following best-approximation is a refined version of
 a result of \citet{Gudi2010}. An alternative proof of the version
 stated here is given in \citep{LiGuanMao2014}.

\begin{proposition}[best-approximation result]\label{p:LinBestAppx}
 The exact solution $u$ of \eqref{e:LinProbExact} and the
 discrete solution $u_\ell$ of \eqref{e:LinProbDiscr}
 satisfy
 \begin{equation*}
  \ennormnc{u-u_\ell}
  \lesssim \norm{(1-\Pi_\ell^0) D^2 u}_{L^2(\Omega)}
           + \osc_2(f,\tri_\ell)  .
 \end{equation*}
\end{proposition}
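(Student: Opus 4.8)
The plan is to derive the best-approximation estimate from a Strang-type identity together with the conforming companion operator $\mathcal{C}$ from Proposition~\ref{p:companion} and the Morley interpolation operator $\Imorl_\ell$. First I would split the error as $\ennormnc{u-u_\ell} \leq \ennormnc{u-\Imorl_\ell u} + \ennormnc{\Imorl_\ell u - u_\ell}$; the first term is controlled directly using the projection property \eqref{e:MorleyInterpolProjProp}, which gives $D^2_\nc \Imorl_\ell u = \Pi_\ell^0 D^2 u$ and hence $\ennormnc{u-\Imorl_\ell u} = \norm{(1-\Pi_\ell^0)D^2 u}_{L^2(\Omega)}$. For the discrete remainder $e_\ell := \Imorl_\ell u - u_\ell \in V_\ell$, I would test the discrete equation \eqref{e:LinProbDiscr} and use the projection property once more, writing
\begin{equation*}
 a_\nc(e_\ell,e_\ell) = a_\nc(\Imorl_\ell u, e_\ell) - b(f,e_\ell)
 = (\Pi_\ell^0 D^2 u, D^2_\nc e_\ell)_{L^2(\Omega)} - b(f,e_\ell)
 = (D^2 u, D^2_\nc e_\ell)_{L^2(\Omega)} - b(f,e_\ell),
\end{equation*}
where the last step uses that $D^2_\nc e_\ell$ is piecewise constant so the insertion of $\Pi_\ell^0$ is free.

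Next I would exploit the companion operator: since $\mathcal{C}e_\ell \in V$, the exact equation \eqref{e:LinProbExact} gives $(D^2 u, D^2 \mathcal{C}e_\ell)_{L^2(\Omega)} = b(f,\mathcal{C}e_\ell)$, so subtracting yields
\begin{equation*}
 a_\nc(e_\ell,e_\ell) = (D^2 u, D^2_\nc(e_\ell - \mathcal{C}e_\ell))_{L^2(\Omega)} + b(f, \mathcal{C}e_\ell - e_\ell).
\end{equation*}
Now the second property in \eqref{e:CompanionProj}, namely $\Pi_\ell^0 D^2_\nc(e_\ell - \mathcal{C}e_\ell) = 0$, lets me insert $(1-\Pi_\ell^0)$ in front of $D^2 u$ in the first term at no cost, bounding it by $\norm{(1-\Pi_\ell^0)D^2 u}_{L^2(\Omega)}\,\ennormnc{e_\ell - \mathcal{C}e_\ell}$, and then \eqref{e:CompanionApproxStab} controls $\ennormnc{e_\ell - \mathcal{C}e_\ell} \lesssim \min_{v\in V}\norm{D^2_\nc(e_\ell-v)}_{L^2(\Omega)} \lesssim \ennormnc{e_\ell - \Imorl_\ell u} + \ennormnc{\Imorl_\ell u - u} \lesssim \ennormnc{e_\ell} + \norm{(1-\Pi_\ell^0)D^2 u}_{L^2(\Omega)}$ (using $u\in V$ as a competitor, after shifting by $\Imorl_\ell u$). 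For the right-hand side term $b(f,\mathcal{C}e_\ell - e_\ell)$, the first property in \eqref{e:CompanionProj} gives $\Pi_\ell^0(e_\ell - \mathcal{C}e_\ell)=0$, so $b(f,\mathcal{C}e_\ell - e_\ell) = b((1-\Pi_\ell^0)f, \mathcal{C}e_\ell - e_\ell)$; weighting by mesh-size and using the $L^2$ approximation bound $\norm{h_\ell^{-2}(e_\ell-\mathcal{C}e_\ell)}_{L^2(\Omega)} \lesssim \ennormnc{e_\ell - \mathcal{C}e_\ell}$ from \eqref{e:CompanionApproxStab} yields a bound by $\osc_2(f,\tri_\ell)\,\ennormnc{e_\ell - \mathcal{C}e_\ell}$.

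Combining these estimates gives $\ennormnc{e_\ell}^2 \lesssim \big(\norm{(1-\Pi_\ell^0)D^2 u}_{L^2(\Omega)} + \osc_2(f,\tri_\ell)\big)\big(\ennormnc{e_\ell} + \norm{(1-\Pi_\ell^0)D^2 u}_{L^2(\Omega)}\big)$, and a Young inequality absorbs the $\ennormnc{e_\ell}$ on the right, leaving $\ennormnc{e_\ell} \lesssim \norm{(1-\Pi_\ell^0)D^2 u}_{L^2(\Omega)} + \osc_2(f,\tri_\ell)$; the triangle inequality with the interpolation term then finishes the proof. The main obstacle I anticipate is bookkeeping the competitor estimate for $\min_{v\in V}\norm{D^2_\nc(e_\ell-v)}$ so that it closes the loop without circularity — one must be careful to feed in $\Imorl_\ell u$ (or $\mathcal{C}\Imorl_\ell u$) rather than something depending on $u_\ell$, and to keep track that all constants are mesh-independent; everything else is a routine chain of Cauchy–Schwarz and the already-established operator bounds.
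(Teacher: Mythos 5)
Your overall strategy coincides with the paper's: split off the Morley interpolant (the paper even obtains a Pythagoras identity, rather than a triangle inequality, from the projection property \eqref{e:MorleyInterpolProjProp}), and treat the discrete remainder by testing with itself and using the companion $\mathcal{C}$ to pass between the discrete and the continuous equation. Your identity $a_\nc(e_\ell,e_\ell)=((1-\Pi_\ell^0)D^2u,D^2_\nc(e_\ell-\mathcal{C}e_\ell))_{L^2(\Omega)}+b(f,\mathcal{C}e_\ell-e_\ell)$ and the treatment of the Hessian term are correct, and your worry about circularity is unfounded: the trivial competitor $v=0$ in $\min_{v\in V}\norm{D^2_\nc(e_\ell-v)}$ already gives $\ennormnc{e_\ell-\mathcal{C}e_\ell}\lesssim\ennormnc{e_\ell}$. (In passing, your intermediate term $\ennormnc{e_\ell-\Imorl_\ell u}$ equals $\ennormnc{u_\ell}$, which is not controlled by $\ennormnc{e_\ell}$; but that detour is unnecessary.)

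The genuine gap is the load term. From $\Pi_\ell^0(e_\ell-\mathcal{C}e_\ell)=0$ you may only insert $(1-\Pi_\ell^0)$, so Cauchy--Schwarz produces the factor $\norm{h_\ell^2(1-\Pi_\ell^0)f}_{L^2(\Omega)}=\osc_0(f,\tri_\ell)$, not $\osc_2(f,\tri_\ell)$. Since $\mathcal P_0(\tri_\ell)\subset\mathcal P_2(\tri_\ell)$, one has $\osc_2\leq\osc_0$ and not the reverse (take $f$ affine and nonconstant: then $\osc_2(f,\tri_\ell)=0$ while $\osc_0(f,\tri_\ell)\neq0$), so the claimed bound by $\osc_2(f,\tri_\ell)\,\ennormnc{e_\ell-\mathcal{C}e_\ell}$ is false as stated. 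The missing ingredient is the efficiency estimate $\norm{h_\ell^2f}_{L^2(\Omega)}\lesssim\norm{(1-\Pi_\ell^0)D^2u}_{L^2(\Omega)}+\osc_2(f,\tri_\ell)$, proved by Verf\"urth's bubble-function technique; this is exactly what the paper invokes, bounding $b(f,\varphi_\ell-\mathcal{C}\varphi_\ell)\leq\norm{h_\ell^2f}\,\norm{h_\ell^{-2}(\varphi_\ell-\mathcal{C}\varphi_\ell)}$ without inserting $(1-\Pi_\ell^0)$ at all and then converting $\norm{h_\ell^2 f}$ into $\norm{(1-\Pi_\ell^0)D^2u}+\osc_2(f,\tri_\ell)$. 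Your argument becomes complete once you add this efficiency step; it is a nontrivial lower bound for the residual, not a routine Cauchy--Schwarz manipulation.
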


\begin{proof}
 The projection property \eqref{e:CompanionProj} of the 
 interpolation operator $\Imorl_\ell$ and the Pythagoras
theorem show that
\begin{equation*}
 \ennormnc{u-u_\ell}^2
  = \ennormnc{u_\ell-\Imorl_\ell u}^2
   + \ennormnc{u- \Imorl_\ell u}^2.
\end{equation*}
Since $\ennormnc{u- \Imorl_\ell u}=\norm{(1-\Pi_\ell^0)D^2 u}$, it remains
to estimate the first term on the right-hand side.
Set $\varphi_\ell:=u_\ell-\Imorl_\ell u$. The properties of the
companion operator from Proposition~\ref{p:companion}
show that
\begin{equation*}
 \ennormnc{u_\ell-\Imorl_\ell u}^2
 = a_\nc(u_\ell-u,\varphi_\ell)
 = b(f,\varphi_\ell-\mathcal C \varphi_\ell) 
    + ((1-\Pi_\ell^0)D^2 u,D^2 _\nc(\mathcal C -1)\varphi_\ell)_{L^2(\Omega)}.
\end{equation*}
The approximation and stability properties 
\eqref{e:CompanionApproxStab} show that this is bounded
by
\begin{equation*}
 (\norm{h_\ell^2 f} + \norm{(1-\Pi_\ell^0)D^2 u}) \ennormnc{\varphi_\ell}.
\end{equation*}
The efficiency $\norm{h_\ell^2 f} 
\lesssim \norm{(1-\Pi_\ell^0)D^2 u} + \osc_2(f,\tri_\ell)$
follows from the arguments of \citet{VerfBook1996}, see, e.g.,
\citep[Prop.~3.1]{Gallistl2014}. This concludes the proof.
\end{proof}

Error estimates for the Morley FEM in the $L^2$ norm are well-established
\citep{LascauxLesaint1975} for the case of a smooth
solution $u\in V\cap H^3(\Omega)$. The smoothness enters the
classical proofs in that traces of certain second-order
derivatives are assumed to exist. This smoothness assumption
is satisfied for the purely clamped case $\partial\Omega=\Gamma_C$
where it is known \citep{BlumRannacher1980,MelzerRannacher1980} that
$u\in H^{5/2+\varepsilon}$ for some $\varepsilon>0$.
For the more general boundary conditions considered here,
this smoothness assumption is not satisfied in general.
The new companion operator $\mathcal C$ from 
Proposition~\ref{p:companion}
allows the proof of an $L^2$ error estimate for any $u\in V$.

\begin{proposition}[$L^2$ control for the linear problem]\label{p:LinL2}
The exact solution $u$ of \eqref{e:LinProbExact} and the
discrete solution $u_\ell$ of \eqref{e:LinProbDiscr} satisfy
\begin{align*}
 \norm{u-u_\ell}
 \lesssim
 \hnull^s\left(\ennormnc{u-u_\ell} + \osc_2(f,\tri_\ell)\right).
\end{align*}
\end{proposition}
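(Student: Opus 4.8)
The plan is to run an Aubin--Nitsche duality argument, but with the new companion operator $\mathcal C$ playing the role of the conforming lift that makes the duality work without extra regularity. First I would introduce the dual problem: let $z\in V$ solve $a(z,w) = b(u-u_\ell, w)$ for all $w\in V$, so that by the elliptic regularity hypothesis (with parameter $s$) one has $\norm{z}_{H^{2+s}(\Omega)}\lesssim \norm{u-u_\ell}$, and in particular $\norm{(1-\Pi_\ell^0)D^2 z}_{L^2(\Omega)}\lesssim \hnull^s\,\norm{u-u_\ell}$ by a standard approximation estimate for the piecewise-constant $L^2$ projection. Testing the dual problem with $w = u - \mathcal C u_\ell \in V$ (note $\mathcal C u_\ell\in V$) gives
\begin{equation*}
  \norm{u-u_\ell}^2 = b(u-u_\ell, u - u_\ell) = a(z, u-\mathcal C u_\ell) + b(u-u_\ell, \mathcal C u_\ell - u_\ell).
\end{equation*}
The second term is immediately controlled: by \eqref{e:CompanionApproxStab}, $\norm{u_\ell - \mathcal C u_\ell}\lesssim \norm{h_\ell^2 (\cdots)}$ — more precisely $\norm{h_\ell^{-2}(u_\ell-\mathcal C u_\ell)}\lesssim \ennormnc{u-u_\ell}$ after the best-approximation reasoning of Proposition~\ref{p:LinBestAppx} — so that $\abs{b(u-u_\ell,\mathcal C u_\ell-u_\ell)}\lesssim \norm{u-u_\ell}\,\hnull^2\,\ennormnc{u-u_\ell}$, which is of the claimed order (and better).

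The main work is the first term $a(z,u-\mathcal C u_\ell)$. I would split it as $a_\nc(z - \mathcal C z_\ell,\, u - \mathcal C u_\ell)$ plus correction terms, or more cleanly write $a(z,u-\mathcal C u_\ell) = a_\nc(z, u - u_\ell) + a_\nc(z, u_\ell - \mathcal C u_\ell)$ and then symmetrically introduce the Morley interpolant and companion operator on the $z$-side. The key algebraic identity to exploit is the pair of projection properties \eqref{e:MorleyInterpolProjProp} and \eqref{e:CompanionProj}: since $\Pi_\ell^0 D^2_\nc(u_\ell-\mathcal C u_\ell)=0$ and $D^2_\nc\Imorl_\ell z = \Pi_\ell^0 D^2 z$, one gets $a_\nc(\Imorl_\ell z,\, u_\ell-\mathcal C u_\ell) = (\Pi_\ell^0 D^2 z,\, D^2_\nc(u_\ell-\mathcal C u_\ell)) = (D^2 z,\, \Pi_\ell^0 D^2_\nc(u_\ell-\mathcal C u_\ell)) = 0$, so the companion-correction term can be measured against $z - \Imorl_\ell z$, i.e.\ against $(1-\Pi_\ell^0)D^2 z$, which is $O(\hnull^s)$. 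Likewise the genuinely nonconforming part $a_\nc(z,u-u_\ell) - b(f,z)$-type residual should be handled by writing $a_\nc(z,u-u_\ell) = a_\nc(z - \mathcal C(\Imorl_\ell z\text{ or similar}),\,u-u_\ell) + a_\nc(\mathcal C(\cdots), u-u_\ell)$, using that $\mathcal C$ of something lands in $V$ so Galerkin orthogonality of the discrete problem \eqref{e:LinProbDiscr} applies to the conforming part, and the remaining term is bounded by $\norm{(1-\Pi_\ell^0)D^2 z}$ times $\ennormnc{u-u_\ell}$ using \eqref{e:CompanionApproxStab} on the $z$-side.

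Collecting everything, each term carries either a factor $\norm{(1-\Pi_\ell^0)D^2 z}_{L^2(\Omega)}\lesssim\hnull^s\norm{u-u_\ell}$ or a factor $\hnull^2$, multiplied by $\ennormnc{u-u_\ell}$ (or by $\ennormnc{u-u_\ell}+\osc_2(f,\tri_\ell)$ once the efficiency bound $\norm{h_\ell^2 f}\lesssim\norm{(1-\Pi_\ell^0)D^2u}+\osc_2(f,\tri_\ell)$ from the proof of Proposition~\ref{p:LinBestAppx} is invoked). Dividing through by $\norm{u-u_\ell}$ yields the assertion. I expect the main obstacle to be the careful bookkeeping of the nonconforming consistency term: one must choose exactly the right conforming function (built from $\mathcal C$ applied to a Morley interpolant, so that it both lies in $V$ and stays within $O(h_\ell^s)$ of $z$ in the energy norm) to absorb the inconsistency $a_\nc(z,u-u_\ell)-b(f,z)$, and to verify that the integral-mean properties $\Pi_\ell^0 D^2 \mathcal C = D^2_\nc$ and $\Pi_\ell^0(v_\ell-\mathcal C v_\ell)=0$ are precisely what cancel the leading-order contributions; the $L^2$-orthogonality of $u_\ell-\mathcal C u_\ell$ and its Hessian against piecewise constants is used in an essential way here, which is exactly why the plain HCT-averaging operator $\mathcal A$ would not suffice and the refined $\mathcal C$ is needed.
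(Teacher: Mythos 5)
Your proposal follows essentially the same route as the paper's proof: the same dual problem for $z$, the same splitting $\norm{e}^2 = b(e,u-\mathcal C u_\ell)+b(e,\mathcal C u_\ell-u_\ell)$ with $a(z,u-\mathcal C u_\ell)=a_\nc(z,u-u_\ell)+a_\nc(z,u_\ell-\mathcal C u_\ell)$, the same exploitation of the projection properties \eqref{e:MorleyInterpolProjProp} and \eqref{e:CompanionProj} to reduce everything to $\norm{(1-\Pi_\ell^0)D^2z}\lesssim\hnull^s\norm{e}$, and the same efficiency bound for $\norm{h_\ell^2 f}$. The only loose spot is the consistency term, where the clean identity is simply $a_\nc(z,u-u_\ell)=b(f,z-\Imorl_\ell z)$ (since $a_\nc(z,u_\ell)=a_\nc(\Imorl_\ell z,u_\ell)=b(f,\Imorl_\ell z)$ by \eqref{e:MorleyInterpolProjProp} and \eqref{e:LinProbDiscr}), which is then bounded by $\norm{h_\ell^2 f}\,\norm{(1-\Pi_\ell^0)D^2 z}$ --- no detour through $\mathcal C(\Imorl_\ell z)$ or any ``Galerkin orthogonality'' is needed.
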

\begin{proof}
 Let $e:=u-u_\ell $ and let $z\in V$ denote the solution of
 \begin{equation*}
    a(z,v) = b(e,v) \quad\text{for all }v\in V.
 \end{equation*}
Since $\Pi_\ell^0(u_\ell -\mathcal{C} u_\ell) = 0$
by Proposition~\ref{p:companion}, it holds that
\begin{equation}\label{e:duality1}
 \begin{aligned}
 \norm{e}^2
 &
  = b(\mathcal{C} u_\ell - u_\ell , e) + b(e,u-\mathcal{C} u_\ell) \\
 &
  = b(\mathcal{C} u_\ell - u_\ell ,(1-\Pi_\ell^0) e) 
     + a(z,u-\mathcal{C} u_\ell).
 \end{aligned}
\end{equation}
Piecewise Poincar\'e inequalities, the discrete Friedrichs
inequality \citep[Thm.~10.6.12]{BrennerScott2008},
 and \eqref{e:CompanionApproxStab} lead to
\begin{equation*}
   b(\mathcal{C} u_\ell  - u_\ell, (1-\Pi_\ell^0)e)
   \lesssim \hnull^3 \ennormnc{e}^2.
\end{equation*}
The second term of the right-hand side in \eqref{e:duality1}
satisfies
\begin{equation}\label{e:duality2}
 a(z,u-\mathcal{C} u_\ell) 
= a_\nc(z,u - u_\ell) + a_\nc(z,u_\ell - \mathcal{C} u_\ell).
\end{equation}
The projection property \eqref{e:MorleyInterpolProjProp}
of $\Imorl_\ell$, the problems 
\eqref{e:LinProbExact} and \eqref{e:LinProbDiscr},
the Cauchy inequality
and the approximation and stability properties 
\eqref{e:MorleyInterpolApprxStab} prove for the first
term of the right-hand side in \eqref{e:duality2} that
\begin{equation*}
a_\nc(z,u-u_\ell) = b(f,z-\Imorl_\ell z) 
   \lesssim \norm{h_\ell^2 f}_{L^2(\Omega)}
       \norm{ (1-\Pi_\ell^0) D^2 z}_{L^2(\Omega)} .
\end{equation*}
The integral mean property  \eqref{e:CompanionProj} 
of $\mathcal{C}$ and the approximation and stability
properties \eqref{e:CompanionApproxStab} prove
for the second term of \eqref{e:duality2} that
\begin{equation*}
 a_\nc(z,u_\ell - \mathcal{C} u_\ell)
= a_\nc(z - \Imorl_\ell z ,u_\ell - \mathcal{C} u_\ell)
\lesssim\ennormnc{u-u_\ell} \norm{(1-\Pi_\ell^0) D^2 z}_{L^2(\Omega)}.
\end{equation*}
The regularity estimates of 
\citep{BlumRannacher1980,Grisvard1985} and the stability
of the problem \eqref{e:LinProbExact} prove that
\begin{equation*}
 \norm{(1-\Pi_\ell^0) D^2 z}_{L^2(\Omega)}
\lesssim \hnull^s \norm{z}_{H^{2+s}(\Omega)}
\lesssim \hnull^s \norm{e}_{L^2(\Omega)}.
\end{equation*}
Efficiency estimates in the spirit of \citep{VerfBook1996} show that
\begin{equation*}
\norm{h_\ell^2 f}_{L^2(\Omega)}
 \lesssim \ennormnc{u-u_\ell} + \osc_2(f,\tri_\ell) .
\end{equation*}
The combination of the foregoing estimates concludes the proof.
\end{proof}

\section{Morley FEM for the Biharmonic Eigenvalue Problem}\label{s:evalest}

This section is devoted to the Morley finite element discretisation of
the biharmonic eigenvalue problem.
Subsection~\ref{ss:Cluster} describes an abstract framework for the
discretisation of selfadjoint eigenproblems. 
Subsection~\ref{ss:EVPdiscret} presents the finite element method along
with a new $L^2$ error estimate. Error estimates for the eigenfunctions
are given in Subsection~\ref{ss:nonstandardritz}--\ref{ss:evalest}.

\subsection{Abstract Approximation of Eigenvalue Clusters} \label{ss:Cluster}

Let $(H,a(\cdot,\cdot))$ be a separable Hilbert space over $\mathbb R$
 with induced norm
$\norm{\cdot}_a$ and let $b(\cdot,\cdot)$ be a scalar product on $H$
with induced norm $\norm{\cdot}_b$ such that the embedding
$(H,\norm{\cdot}_a)\hookrightarrow (H,\norm{\cdot}_b)$ is compact.
In the applications of this paper, $a$ and $b$ are the bilinear
forms defined in Subsection~\ref{ss:datastruct} and, hence,
no notational distinction is made for the possibly more general
bilinear forms $a$, $b$ in this subsection.
Consider the following eigenvalue problem: Find
eigenpairs $(\lambda,u)\in\mathbb R \times H$ with
$\norm{u}_b = 1$ such that
\begin{equation}\label{e:Cluster:AbstrEVP}
 a(u,v) = \lambda b(u,v)\quad\text{for all }v\in H.
\end{equation}
It is well known from the spectral theory of selfadjoint compact
operators
\citep{Chatelin1983,Kato1966}
that the eigenvalue problem \eqref{e:Cluster:AbstrEVP} has
countably many eigenvalues, which are real and positive
with $+\infty$ as only possible accumulation point.
Suppose that the eigenvalues are enumerated as
\begin{equation*}
 0<\lambda_1\leq\lambda_2\leq\lambda_3\leq\dots
\end{equation*}
and let $(u_1,u_2,u_3,\dots)$ be some $b$-orthonormal system of
corresponding eigenfunctions. For any $j\in\mathbb N$, the eigenspace
corresponding to $\lambda_j$ is defined as
\begin{equation*}
E(\lambda_j) 
:= \{ u\in H \mid (\lambda_j,u)
       \text{ satisfies } \eqref{e:Cluster:AbstrEVP}\}  
=
\operatorname{span}\{u_k \mid k\in\mathbb N \text{ and } \lambda_k =\lambda_j\}
.
\end{equation*}
In the present case of an eigenvalue problem of 
(the inverse of) a compact operator,
the spaces $E(\lambda_j)$ have finite dimension.
The discretisation of \eqref{e:Cluster:AbstrEVP} is based on a family
(over a countable index set $I$) of separable 
(not necessarily finite-dimensional) Hilbert spaces 
$H_\ell$  with scalar products
$a_\nc(\cdot,\cdot)$ and $b_\nc(\cdot,\cdot)$ on $H+H_\ell$ with
induced norms $\norm{\cdot}_{a,\nc}$ and $\norm{\cdot}_{b,\nc}$ such that
$a_\nc$ and $b_\nc$ coincide with $a$ and $b$ when restricted to
$H$
\begin{equation*}
 a_\nc|_{H\times H} = a \quad\text{and}\quad b_\nc|_{H\times H}=b.
\end{equation*}
The discrete eigenvalue problem seeks eigenpairs 
$(\lambda_\ell,u_\ell)\in\mathbb R \times H_\ell$ with
$\norm{u_\ell}_{b,\nc} = 1$ such that
\begin{equation}\label{e:Cluster:DiscrEVP}
 a_\nc(u_\ell,v_\ell) = \lambda_\ell b_\nc(u_\ell,v_\ell)
  \quad\text{for all }v_\ell\in H_\ell.
\end{equation}
The discrete eigenvalues can be enumerated
\begin{equation*}
 0<\lambda_{\ell,1}\leq\lambda_{\ell,2}\leq\lambda_{\ell,3}\dots
\end{equation*}
with corresponding $b_\nc$-orthonormal eigenfunctions
$(u_{\ell,1},u_{\ell,2},u_{\ell,3}\dots)$.
For a cluster of eigenvalues
$\lambda_{n+1},\dots,$ $\lambda_{n+N}$ of length
$N\in\mathbb{N}$, define the index set
$J:=\{n+1,\dots,n+N\}$ and the spaces
\begin{equation*}
W := \operatorname{span}\{u_j\mid j\in J\} \quad\text{and}\quad
W_\ell := \operatorname{span}\{u_{\ell,j}\mid j\in J\}.
\end{equation*}
The eigenspaces $E(\lambda_j)$ may differ for different $j\in J$.

Assume that the cluster is contained in a compact interval $[A,B]$ in the
sense that
\begin{equation*}
 \{\lambda_j\mid j\in J\}
 \cup
 \{\lambda_{\ell,j} \mid \ell\in I, j\in J\}
 \subseteq [A,B] .
\end{equation*}
This implies
\begin{equation}\label{e:Cluster:supleqAB}
\sup_{\ell\in I}\max_{(j,k)\in J^2}\max
  \Big\{\lambda_k^{-1}\lambda_{\ell,j},\lambda_{\ell,j}^{-1}\lambda_k\Big\}
  \leq B/A.
\end{equation}
Recall that $\operatorname{dim}(H_\ell) \in \mathbb N \cup \{\infty\}$
and let $J^C:=\{1,\dots,\dim(H_\ell)\} \setminus J$ denote the complement
of $J$.
Assume that the cluster is separated from the remaining part of
the spectrum in the sense that there exists a separation bound
\begin{equation*}
   M_J:= \sup_{\ell\in I}
          \sup_{j\in J^C}
          \max_{k\in J} \frac{\lambda_k}{\abs{\lambda_{\ell,j}-\lambda_k}}
        < \infty
 .
\end{equation*}
In particular, this assumption requires that the definition of the
cluster $J$ does not split a multiple eigenvalue.
Given $f\in H$, let $u\in H$ denote the unique solution to the
linear problem
\begin{equation*}
  a(u ,v) = b(f,v) \quad\text{for all } v \in H .
\end{equation*}
The quasi-Ritz projection $R_\ell u \in H_\ell$ is defined
as the unique solution to
\begin{equation*}
 a_\nc (R_\ell u , v_\ell) = b_\nc(f,v_\ell)
 \quad\text{for all } v_\ell \in H_\ell.
\end{equation*}
Let $P_\ell$ denote the $b_\nc$-orthogonal projection onto
$W_\ell$ and define
\begin{equation}\label{e:Cluster:defLambda}
 \Lambda_\ell  := P_\ell \circ R_\ell.
\end{equation}
For any eigenfunction $u\in W$, the function
$\Lambda_\ell u \in W_\ell$ is regarded as its approximation.
This approximation does not depend on the 
basis of $W_\ell$. Notice that $\Lambda_\ell u$ is neither computable
without knowledge of $u$ nor necessarily an eigenfunction.

The following result is essentially contained in the book
of \citet{StrangFix1973} and in \citep{CarstensenGedicke2011}
for a conforming finite element
discretisations.
The version stated here is proven in
\citep{Gallistl2014nc}.

\begin{proposition}\label{p:Cluster:GlLambdalDiff}
Any eigenpair $(\lambda,u)\in \mathbb R\times W$
of \eqref{e:Cluster:AbstrEVP}
with $\norm{u}_b = 1$ satisfies
\begin{equation*}
\begin{aligned}
  \norm{R_\ell u-\Lambda_\ell u}_{b,\nc}
&  \leq M_J \norm{u-R_\ell u}_{b,\nc} \quad\text{and}
\\
  \norm{u - P_\ell u}_{b,\nc}
   \leq
  \norm{u - \Lambda_\ell u}_{b,\nc} 
&  \leq (1+M_J) \norm{u-R_\ell u}_{b,\nc}.
\end{aligned}
\end{equation*}
\end{proposition}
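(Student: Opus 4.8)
The plan is to work entirely in the $b_\nc$-inner product on $H+H_\ell$, exploiting the self-adjointness of the cluster projection $P_\ell$ and the near-invariance of $W_\ell$ under $R_\ell$. First I would establish the first inequality $\norm{R_\ell u - \Lambda_\ell u}_{b,\nc} \leq M_J \norm{u - R_\ell u}_{b,\nc}$. Since $\Lambda_\ell u = P_\ell R_\ell u$, the quantity $R_\ell u - \Lambda_\ell u = (1-P_\ell) R_\ell u$ is the $b_\nc$-orthogonal projection of $R_\ell u$ onto the $b_\nc$-orthogonal complement of $W_\ell$, i.e.\ onto $\operatorname{span}\{u_{\ell,j} \mid j \in J^C\}$. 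I would expand $R_\ell u = \sum_{k} c_k u_{\ell,k}$ in the discrete eigenbasis; then $(1-P_\ell)R_\ell u = \sum_{k\in J^C} c_k u_{\ell,k}$. The key identity is that $R_\ell$ is the $a_\nc$-Ritz projection of $u$ for the source problem with datum $f = \lambda u$ (because $(\lambda,u)$ solves \eqref{e:Cluster:AbstrEVP}, so $a(u,v) = \lambda b(u,v) = b(\lambda u, v)$ for all $v\in H$). Testing the definition of $R_\ell u$ with $v_\ell = u_{\ell,k}$ and using $a_\nc(u_{\ell,k},\cdot) = \lambda_{\ell,k} b_\nc(u_{\ell,k},\cdot)$ gives $\lambda_{\ell,k} c_k = \lambda\, b_\nc(u, u_{\ell,k})$, hence $c_k = \tfrac{\lambda}{\lambda_{\ell,k}} b_\nc(u,u_{\ell,k})$.

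On the other hand, the $b_\nc$-orthogonal projection of $u$ onto $\operatorname{span}\{u_{\ell,k}\}$ has coefficient $b_\nc(u,u_{\ell,k})$ in front of $u_{\ell,k}$, so the $k$-th coefficient of $u - R_\ell u$ has absolute value $\bigl|1 - \tfrac{\lambda}{\lambda_{\ell,k}}\bigr| \, |b_\nc(u,u_{\ell,k})| = \tfrac{|\lambda_{\ell,k} - \lambda|}{\lambda_{\ell,k}} |b_\nc(u,u_{\ell,k})|$ plus whatever component of $u$ lies $b_\nc$-orthogonal to the whole discrete space (which only enlarges $\norm{u-R_\ell u}_{b,\nc}$). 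For $k \in J^C$, I would bound $|c_k| = \tfrac{\lambda}{\lambda_{\ell,k}}|b_\nc(u,u_{\ell,k})| = \tfrac{\lambda}{|\lambda_{\ell,k}-\lambda|}\cdot \tfrac{|\lambda_{\ell,k}-\lambda|}{\lambda_{\ell,k}}|b_\nc(u,u_{\ell,k})| \leq M_J \cdot \tfrac{|\lambda_{\ell,k}-\lambda|}{\lambda_{\ell,k}}|b_\nc(u,u_{\ell,k})|$, using the definition of $M_J$ (with $\lambda = \lambda_k$ for the appropriate $k \in J$, since $u\in W$; more precisely, if $u$ is a single eigenfunction with $\lambda = \lambda_j$ some $j\in J$, or a linear combination across the cluster, one applies the estimate componentwise). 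Summing the squares over $k\in J^C$ via Parseval and comparing term-by-term with the expansion of $u - R_\ell u$ yields the first inequality.

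For the second chain of inequalities, the left-hand bound $\norm{u - P_\ell u}_{b,\nc} \leq \norm{u - \Lambda_\ell u}_{b,\nc}$ is immediate: $P_\ell u$ is the $b_\nc$-best approximation of $u$ in $W_\ell$ and $\Lambda_\ell u \in W_\ell$. For the right-hand bound I would use the triangle inequality $\norm{u - \Lambda_\ell u}_{b,\nc} \leq \norm{u - R_\ell u}_{b,\nc} + \norm{R_\ell u - \Lambda_\ell u}_{b,\nc}$ and insert the first inequality to get the factor $1 + M_J$. The main obstacle, such as it is, is bookkeeping: handling the case where $u\in W$ is a genuine combination of eigenfunctions across several distinct eigenvalues in the cluster, so that $\lambda$ is not a single number. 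This is resolved by decomposing $u = \sum_{j\in J}\beta_j u_j$ along the $b$-orthonormal cluster eigenbasis, noting $R_\ell$ and $P_\ell$ are linear, and applying the scalar estimate to each $u_j$ with its own $\lambda_j$; the definition of $M_J$ quantifies over all $k\in J$ so the bound is uniform. Since the result is stated to be already proven in \citep{Gallistl2014nc}, I would present this as a short self-contained argument rather than reconstructing that reference in detail.
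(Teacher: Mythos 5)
Your argument is correct and is essentially the standard proof that the paper itself delegates to the reference \citep{Gallistl2014nc}: expand $R_\ell u$ in the $b_\nc$-orthonormal discrete eigenbasis, read off $c_k = (\lambda/\lambda_{\ell,k})\, b_\nc(u,u_{\ell,k})$ from the definition of $R_\ell$ and the discrete eigenvalue equation, bound the tail $\sum_{k\in J^C}\lvert c_k\rvert^2$ termwise against the Bessel lower bound for $\norm{u-R_\ell u}_{b,\nc}^2$ via the definition of $M_J$, and finish with best approximation plus the triangle inequality. Two minor remarks: the expansion $R_\ell u=\sum_k c_k u_{\ell,k}$ tacitly uses that the discrete eigenfunctions form a complete $b_\nc$-orthonormal system in $H_\ell$, which is an implicit assumption of the abstract framework (and holds in the applications, where the embedding into the $b$-space is compact); and your concern about $u$ mixing several eigenvalues is moot, since the proposition is stated for a genuine eigenpair, so $\lambda=\lambda_{k_0}$ for a single $k_0\in J$ and the quotient $\lambda/\lvert\lambda_{\ell,k}-\lambda\rvert\le M_J$ for $k\in J^C$ is exactly what the separation bound provides.
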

\begin{proof}
 See \citep{Gallistl2014nc}.
\end{proof}

The following algebraic identity applies frequently
in the analysis. It states the important property that,
although $\Lambda_\ell u$ is no eigenfunction in general,
$\Lambda_\ell u$ satisfies an equation that is similar to an
eigenfunction property.

\begin{lemma}\label{l:LambdaPLemma}
 Any eigenpair
 $(\lambda, u)\in \mathbb{R}\times H$
 of \eqref{e:Cluster:AbstrEVP}
 satisfies
 \begin{equation*}
  a_\nc(\Lambda_\ell u,v_\ell) 
   = \lambda b_\nc(P_\ell u,v_\ell)
  \quad\text{for all } v_\ell \in H_\ell
   .
 \end{equation*}
 In other words, $R_\ell$ and $P_\ell$ commute,
 $P_\ell\circ R_\ell = R_\ell\circ P_\ell$.
 \end{lemma}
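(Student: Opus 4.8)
The plan is to unwind the definitions of $R_\ell$, $P_\ell$, and $\Lambda_\ell$ and use the eigenfunction property of $u$ together with the defining relation of the quasi-Ritz projection. First I would observe that, since $(\lambda,u)$ solves \eqref{e:Cluster:AbstrEVP}, the function $u$ is the solution of the linear problem $a(u,v)=b(\lambda u,v)$ for all $v\in H$; that is, $u$ plays the role of ``$u$'' in the definition of $R_\ell$ with right-hand side $f=\lambda u$. Hence by the definition of the quasi-Ritz projection, $R_\ell u\in H_\ell$ satisfies $a_\nc(R_\ell u,v_\ell)=b_\nc(\lambda u,v_\ell)=\lambda b_\nc(u,v_\ell)$ for all $v_\ell\in H_\ell$.

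Next I would insert the $b_\nc$-orthogonal projection. Since $P_\ell$ is the $b_\nc$-orthogonal projection onto $W_\ell$ and $v_\ell\in H_\ell$ need not lie in $W_\ell$, this is where care is needed: one cannot simply replace $u$ by $P_\ell u$ on the right-hand side for arbitrary $v_\ell$. The key step is therefore to test only against $v_\ell\in W_\ell$ first. For such $v_\ell$, self-adjointness of $P_\ell$ with respect to $b_\nc$ and $P_\ell v_\ell = v_\ell$ give $b_\nc(u,v_\ell)=b_\nc(u,P_\ell v_\ell)=b_\nc(P_\ell u,v_\ell)$, so that $a_\nc(R_\ell u,v_\ell)=\lambda b_\nc(P_\ell u,v_\ell)$ for all $v_\ell\in W_\ell$. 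To identify the left-hand side with $a_\nc(\Lambda_\ell u,v_\ell)$, note $\Lambda_\ell u=P_\ell R_\ell u$, so $R_\ell u-\Lambda_\ell u\in W_\ell^{\perp_b}$; one then needs that $W_\ell^{\perp_b}$ is also $a_\nc$-orthogonal to $W_\ell$. This follows because $W_\ell$ is spanned by discrete eigenfunctions: for $w_\ell\in W_\ell$ and $v_\ell\in W_\ell$ one has, expanding $w_\ell$ in the $b_\nc$-orthonormal eigenbasis and using \eqref{e:Cluster:DiscrEVP}, that $a_\nc(w_\ell,v_\ell)=\sum_{j\in J}\lambda_{\ell,j} b_\nc(\text{coeff}) \cdots$ — more cleanly, $a_\nc(w_\ell,\cdot)$ restricted to $H_\ell$ acts as $b_\nc$ against a linear combination of $W_\ell$-eigenfunctions, hence $a_\nc(R_\ell u - \Lambda_\ell u, v_\ell)=0$ for all $v_\ell\in W_\ell$ precisely because $R_\ell u-\Lambda_\ell u\perp_{b_\nc}W_\ell$ and each eigenfunction in $W_\ell$ is $b_\nc$-orthogonal to it. Combining, $a_\nc(\Lambda_\ell u,v_\ell)=a_\nc(R_\ell u,v_\ell)=\lambda b_\nc(P_\ell u,v_\ell)$ for all $v_\ell\in W_\ell$.

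It remains to extend the identity from $v_\ell\in W_\ell$ to all $v_\ell\in H_\ell$. Decompose an arbitrary $v_\ell=P_\ell v_\ell + (1-P_\ell)v_\ell$. On the $W_\ell$ component both sides are handled by the previous paragraph. For the component $(1-P_\ell)v_\ell\in W_\ell^{\perp_b}$: the right-hand side $\lambda b_\nc(P_\ell u,(1-P_\ell)v_\ell)=0$ since $P_\ell u\in W_\ell$; and the left-hand side $a_\nc(\Lambda_\ell u,(1-P_\ell)v_\ell)=0$ since $\Lambda_\ell u\in W_\ell$ and, by the eigenfunction structure of $W_\ell$ just used, $a_\nc(w_\ell,\cdot)$ vanishes on $W_\ell^{\perp_b}\cap H_\ell$ for $w_\ell\in W_\ell$. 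Adding the two components yields the claimed identity for all $v_\ell\in H_\ell$. The final assertion $P_\ell\circ R_\ell=R_\ell\circ P_\ell$ then follows by comparing this identity with the defining relation of $R_\ell(P_\ell u)$: both $\Lambda_\ell u=P_\ell R_\ell u$ and $R_\ell P_\ell u$ solve $a_\nc(\cdot,v_\ell)=\lambda b_\nc(P_\ell u,v_\ell)$ for all $v_\ell\in H_\ell$, so by uniqueness of the quasi-Ritz solution they coincide, and since eigenfunctions span $H$ the operator identity follows by linearity.

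The main obstacle is the passage through the discrete eigenfunction structure of $W_\ell$: one must exploit that $W_\ell$ is an invariant subspace of the discrete operator so that $a_\nc$ and $b_\nc$ ``decouple'' $W_\ell$ from its $b_\nc$-orthogonal complement inside $H_\ell$. Once that orthogonality bookkeeping is set up, the rest is a routine manipulation of the defining variational equations.
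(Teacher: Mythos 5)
Your proof is correct, and since the paper itself only cites \citep[Lemma~2.2]{Gallistl2014} here, your argument supplies essentially the standard reasoning behind that reference: the whole lemma reduces to the fact that $W_\ell$ is spanned by discrete eigenfunctions, so that $P_\ell$ is self-adjoint with respect to $a_\nc$ as well as $b_\nc$ on $H_\ell$, giving $a_\nc(P_\ell R_\ell u, v_\ell)=a_\nc(R_\ell u,P_\ell v_\ell)=\lambda b_\nc(u,P_\ell v_\ell)=\lambda b_\nc(P_\ell u,v_\ell)$. Your split into $v_\ell\in W_\ell$ and $v_\ell\in W_\ell^{\perp_{b_\nc}}\cap H_\ell$ is just a reorganisation of this one-line computation, and your closing uniqueness argument for $P_\ell\circ R_\ell=R_\ell\circ P_\ell$ is the intended reading of that (slightly informal) assertion.
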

\begin{proof}
The proof is given in \citep[Lemma~2.2]{Gallistl2014}.
\end{proof}

The following theorem of \citet{KnyazevOsborn2006} gives an abstract
eigenvalue error estimate in case $H_\ell \subseteq H$.

\begin{theorem}[Corollary~3.4 of \citep{KnyazevOsborn2006}]
\label{t:Knyazev}
 Suppose $H_\ell \subseteq H$ and let, for $p\in\mathbb N$,
 $\lambda_p$ be an eigenvalue of \eqref{e:Cluster:AbstrEVP}
 with multiplicity $q\in\mathbb N$, so that
\begin{equation*}
 \lambda_{p-1}<\lambda_p=\dots=\lambda_{p+q-1}<\lambda_{p+q} 
\end{equation*}
(with the convention $\lambda_0:=0$) and suppose that
\begin{equation*}
 \min_{j={1,\dots,p-1}} \abs{\lambda_{\ell,j} - \lambda_p} \neq 0.
\end{equation*}
Let $T:H\to H$ denote the solution operator of the associated linear
problem, i.e., for given $f\in H$, $Tf\in H$ solves
\begin{equation*}
  a(Tf,v) = b(f,v) \quad\text{for all } v\in H.
\end{equation*}
Then, for any $k\in\{p,\dots,p+q-1\}$, the following estimate holds
\begin{equation*}
\begin{aligned}
&
\frac{\lambda_{\ell,k} - \lambda_p}{\lambda_{\ell,k}}
\\
&\quad
\leq
\bigg(
  1+  \max_{j={1,\dots,p-1}}
     \frac{\lambda_{\ell,j}^2\lambda_p^2}{\abs{\lambda_{\ell,j}-\lambda_p}^2}
     \sup_{\substack{f\in \operatorname{span}\{u_{\ell,1},\dots ,u_{\ell,p-1}\}\\ \norm{f}_a = 1 }} \norm{(1-R_\ell) T f}_a^2
\bigg) 
\sup_{\substack{u\in E(\lambda_p) \\ \norm{u}_a=1}}
           \inf_{v_\ell\in H_\ell} \norm{u-v_\ell}_a^2  
\end{aligned}
\end{equation*}
 where the maximum and supremum in the parentheses are
 $0$ for $p=1$.
\endproof
\end{theorem}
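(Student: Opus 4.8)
This estimate is Corollary~3.4 of \citet{KnyazevOsborn2006}, so in the present paper it is invoked by reference; the following lines only indicate how one can recover it within the notation set up above. Since $H_\ell\subseteq H$, the Courant--Fischer min--max principle already gives $\lambda_{\ell,k}\geq\lambda_k=\lambda_p$ for $k\in\{p,\dots,p+q-1\}$, so the plan is to bound $\lambda_{\ell,k}$ from above by feeding a well-chosen $k$-dimensional trial space $S_\ell\subseteq H_\ell$ into $\lambda_{\ell,k}\leq\max_{0\neq v\in S_\ell}a(v,v)/b(v,v)$. I would take
\[
 S_\ell:=L_\ell\oplus\widehat E_\ell,\qquad L_\ell:=\operatorname{span}\{u_{\ell,1},\dots,u_{\ell,p-1}\},
\]
where $\widehat E_\ell$ is a $(k-p+1)$-dimensional subspace of $(1-P_\ell^L)R_\ell\bigl(E(\lambda_p)\bigr)$ and $P_\ell^L$ is the $b$-orthogonal projection onto $L_\ell$. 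The hypothesis $\min_{j<p}\abs{\lambda_{\ell,j}-\lambda_p}\neq 0$ together with interlacing makes $(1-P_\ell^L)R_\ell$ injective on $E(\lambda_p)$ in the regime where the parenthetical factor in the assertion does not already exceed $1$, so that $\dim S_\ell=k$; in the complementary coarse regime there is nothing to prove.

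The algebraic fact I would exploit next is that the splitting $S_\ell=L_\ell\oplus\widehat E_\ell$ is simultaneously $a$- and $b$-orthogonal: for $\ell=\sum_{j<p}\alpha_j u_{\ell,j}$ and $w\in\widehat E_\ell$ one has $b(\ell,w)=0$ by construction, hence also $a(\ell,w)=\sum_{j<p}\alpha_j\lambda_{\ell,j}\,b(u_{\ell,j},w)=0$ because each $u_{\ell,j}$ is a discrete eigenfunction. Combining this with $a(\ell,\ell)\leq\lambda_p b(\ell,\ell)$ for $\ell\in L_\ell$ and with $\norm{w}_b^2\geq 1-\mathcal O(\varepsilon^2)$, where $\varepsilon^2:=\sup_{u\in E(\lambda_p),\,\norm{u}_a=1}\inf_{v_\ell\in H_\ell}\norm{u-v_\ell}_a^2$, a short computation reduces the whole estimate to a bound for $a(w,w)-\lambda_p b(w,w)$ with $w=(1-P_\ell^L)R_\ell u$ for $u\in E(\lambda_p)$.

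For that I would write $w=R_\ell u-P_\ell^L R_\ell u$. The first summand is the $a$-best approximation of $u$ in $H_\ell$ and contributes the term $1$ in the parenthesis through $a(R_\ell u,R_\ell u)-\lambda_p b(R_\ell u,R_\ell u)\leq\norm{(1-R_\ell)u}_a^2\lesssim\lambda_p\varepsilon^2$. The second summand is the leakage into $L_\ell$: testing the equation for $R_\ell u$ and the eigenequation for $u_{\ell,j}$ against each other yields $\lambda_{\ell,j}b(R_\ell u,u_{\ell,j})=\lambda_p b(u,u_{\ell,j})$, hence
\[
 b(R_\ell u,u_{\ell,j})=\frac{\lambda_p}{\lambda_{\ell,j}-\lambda_p}\,b\bigl((1-R_\ell)u,u_{\ell,j}\bigr),
\]
and an Aubin--Nitsche step rewrites $b((1-R_\ell)u,u_{\ell,j})=a\bigl((1-R_\ell)u,(1-R_\ell)Tu_{\ell,j}\bigr)$, bounded by $\norm{(1-R_\ell)u}_a\norm{(1-R_\ell)Tu_{\ell,j}}_a$ with $\norm{(1-R_\ell)Tu_{\ell,j}}_a\leq\lambda_{\ell,j}^{1/2}\sup_{\norm{f}_a=1}\norm{(1-R_\ell)Tf}_a$. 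Substituting this into $a(P_\ell^L R_\ell u,P_\ell^L R_\ell u)-\lambda_p b(P_\ell^L R_\ell u,P_\ell^L R_\ell u)=\sum_{j<p}(\lambda_{\ell,j}-\lambda_p)b(R_\ell u,u_{\ell,j})^2$ then produces exactly the weight $\tfrac{\lambda_{\ell,j}^2\lambda_p^2}{\abs{\lambda_{\ell,j}-\lambda_p}^2}$ (the square of the norm of the resolvent $(T-\lambda_p^{-1})^{-1}$ restricted to the $j$-th discrete eigenspace) times $\sup_f\norm{(1-R_\ell)Tf}_a^2$ and $\varepsilon^2$. Collecting the two contributions, dividing by $\lambda_{\ell,k}\geq\lambda_p$ and using $\tfrac{\lambda_{\ell,k}-\lambda_p}{\lambda_{\ell,k}}=1-\tfrac{\lambda_p}{\lambda_{\ell,k}}$ would give the claim.

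I expect the main obstacle to be this last step: quantifying sharply how much the approximated eigenfunction $R_\ell u$ leaks into the span of the lower discrete eigenfunctions. This needs both the exact identity linking $b(R_\ell u,u_{\ell,j})$ to $b((1-R_\ell)u,u_{\ell,j})$ and the duality estimate that isolates the factor $\norm{(1-R_\ell)Tf}_a$, and it is also where one must dispose of the degenerate case in which the first $p-1$ discrete eigenvalues are not yet separated from $\lambda_p$ (so that either $\dim S_\ell=k$ or the sign of $a(\ell,\ell)-\lambda_p b(\ell,\ell)$ could fail); there the right-hand side of the estimate exceeds $1$ and the bound is trivial.
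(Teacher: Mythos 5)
The paper gives no proof of this statement at all: it is quoted verbatim as Corollary~3.4 of \citet{KnyazevOsborn2006} and the proof obligation is discharged by the citation (the only nontrivial remark the paper adds is Remark~\ref{r:Cluster:KOdiminfOK}, which explains why the result survives $\dim H_\ell=\infty$). You correctly identify this, and your sketch is a faithful reconstruction of the Knyazev--Osborn mechanism: min--max applied to a $k$-dimensional trial space spanned by the lower discrete eigenfunctions together with the $b$-orthogonal complement in $H_\ell$ of the Ritz image of $E(\lambda_p)$, the resolvent-type identity $(\lambda_{\ell,j}-\lambda_p)\,b(R_\ell u,u_{\ell,j})=\lambda_p\,b((1-R_\ell)u,u_{\ell,j})$, and an Aubin--Nitsche step to extract the factor $\norm{(1-R_\ell)Tf}_a$. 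Two caveats if you intend this as more than a pointer to the literature. First, your dismissal of the degenerate case (some $\lambda_{\ell,j}>\lambda_p$ for $j<p$, which breaks both the sign of $a(\ell,\ell)-\lambda_p b(\ell,\ell)$ and possibly the injectivity of $(1-P_\ell^L)R_\ell$ on $E(\lambda_p)$) by claiming the right-hand side then exceeds $1$ is not substantiated: the left-hand side is indeed $<1$, but you have not shown the right-hand side is $\geq 1$ in that regime, and this is precisely the bookkeeping Knyazev--Osborn carry out carefully. Second, the crude bounds you write produce the weight $\lambda_p^2\lambda_{\ell,j}/\abs{\lambda_{\ell,j}-\lambda_p}$ rather than $\lambda_p^2\lambda_{\ell,j}^2/\abs{\lambda_{\ell,j}-\lambda_p}^2$, so the constants need to be tracked more precisely to land on the stated estimate. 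Neither issue matters for the paper, which (reasonably) does not reprove the result.
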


\begin{remark}
In this paper, the first supremum will usually be estimated through
(a power of)
some Friedrichs-type constant although it can be seen that in case
of a finite element space $V_\ell$ this quantity even decays as a
certain power of the maximum mesh-size.
\end{remark}

\begin{remark}\label{r:Cluster:KOdiminfOK}
In \citep{KnyazevOsborn2006} the result of Theorem~\ref{t:Knyazev} 
is stated for a finite-dimensional space $H_\ell$, but it
is valid even if $H_\ell$ has
infinite dimension. Only the finite dimension of the eigenspaces
is required. One way to see this is to trace carefully the arguments
in the proof of \citet{KnyazevOsborn2006}. For the reader's convenience,
another argument is given here that reduces the stated result for
$\dim H_\ell = \infty$ to the finite-dimensional case.
To this end, consider the finite-dimensional subspace
\begin{equation*}
\widetilde H_\ell := \operatorname{span}
\{ u_{\ell,1},\dots,u_{\ell,p+q-1},R_\ell u_p,\dots,R_\ell u_{p+q-1}, 
   R_\ell T u_{\ell,p},\dots R_\ell T {u_{\ell,{p-1}}}
\}
\subseteq H_\ell .
\end{equation*}
The finite-dimensional space $\widetilde H_\ell$ is constructed in such a way that the
first $p+q-1$ eigenvalues $\lambda_{\ell,1},\dots,\lambda_{\ell,p+q-1}$
that are relevant for the statement of Theorem~\ref{t:Knyazev}
are attained in $\widetilde H_\ell$ and similarly all further quantities
in the estimate are attained in this finite-dimensional space.
For instance,
\begin{equation*}
\sup_{\substack{u\in E(\lambda_p) \\ \norm{u}_a=1}}
           \inf_{v_\ell\in H_\ell} \norm{u-v_\ell}_a^2 
=
\sup_{\substack{u\in E(\lambda_p) \\ \norm{u}_a=1}}
           \norm{u-R_\ell u}_a^2 
=
\sup_{\substack{u\in \operatorname{span}\{u_p,\dots,u_{p+q-1}\} \\ \norm{u}_a=1}}
           \norm{u-R_\ell u}_a^2 
\end{equation*}
is realised in $\widetilde H$.
Theorem~\ref{t:Knyazev} can be employed for
$\widetilde H_\ell$ in its original version and is thereby also valid
for $H_\ell$ because the claimed inequality is the same.
\end{remark}

\begin{remark}
 The conformity assumption $H_\ell\subseteq H$ is essential for 
 the proof of Theorem~\ref{t:Knyazev} and the result may be
 not true in general for nonconforming approximations where
 $H_\ell\not\subseteq H$. Subsection~\ref{ss:evalest} will apply
 Theorem~\ref{t:Knyazev} to a modified setting.
\end{remark}

\begin{remark}
In Subsection~\ref{ss:evalest} below, Theorem~\ref{t:Knyazev}
will be applied to the case that
$ H_\ell := V\subseteq \widehat V_\ell :=V+V_\ell=:H$ 
where $V_\ell$ is a nonconforming finite element space and
$V$ itself is a subspace of the enhanced space $\widehat V_\ell$.
\end{remark}

\begin{remark}[normalisation]\label{r:normalis}
The eigenvalue problems in this paper are based on the normalisation
$\norm{\cdot}_{b,\nc} = 1$ and typically approximation quantities like
\begin{equation*}
\sup_{\substack{w\in W\\ \norm{w}_{b,\nc} = 1}}
\inf_{v_\ell\in W_\ell}         \norm{w-v_\ell}_{a,\nc}^2
\end{equation*}
arise in the analysis.
To see that this quantity essentially describes the angle
$\sin_{a,\nc}^2\angle(W,W_\ell)$ up to some scaling, consider the expansion
of $w$
in terms of the eigenfunctions of $W$. Then the eigenvalue problem
implies
\begin{equation*}
\begin{aligned}
\sin_{a,\nc}^2\angle(W,W_\ell)
&=
\sup_{w\in W\setminus\{0\}}
\frac{\inf_{v_\ell\in W_\ell}\norm{w-v_\ell}_{a,\nc}^2}{\norm{w}_{b,\nc}^2}
\frac{\norm{w}_{b,\nc}^2}{\norm{w}_{a,\nc}^2}
\\
&
\leq 
\frac{1}{\lambda_{n+1}}
\sup_{\substack{w\in W\\ \norm{w}_{b,\nc} = 1}}
\inf_{v_\ell\in W_\ell}         \norm{w-v_\ell}_{a,\nc}^2
\\
&
\leq 
\frac{\lambda_{n+N}}{\lambda_{n+1}} \sin_{a,\nc}^2\angle(W,W_\ell)
\leq \frac{B}{A}\sin_{a,\nc}^2\angle(W,W_\ell) .
\end{aligned}
\end{equation*}
This means that the error quantities are comparable up to a factor
described by the ratio of the cluster bounds.
\end{remark}

\subsection{Morley FEM Discretisation for the Eigenvalue Problem}
\label{ss:EVPdiscret}

The weak form of the biharmonic eigenvalue problem seeks
eigenpairs $(\lambda,u)\in \mathbb R\times V$ with $\norm{u}=1$
such that
\begin{equation}\label{e:ExactBihEVP}
 a(u,v) = \lambda b(u,v)\quad\text{for all } v\in V.
\end{equation}

The Morley finite element discretisation of
problem \eqref{e:ExactBihEVP} seeks 
$(\lambda_\ell,u_\ell)\in\mathbb R\times V_\ell$ 
with $\norm{u_\ell} = 1$ such that
\begin{equation}\label{e:DiscrBihEVP}
 a_\nc(u_\ell,v_\ell) = \lambda_\ell b(u_\ell,v_\ell)
 \quad\text{for all } v_\ell\in V_\ell.
\end{equation}
Recall the notation from Subsection~\ref{ss:Cluster} for
$H= V$ and $H_\ell = V_\ell$ and
the exact and discrete eigenvalues
\begin{equation*}
 0<\lambda_1\leq\lambda_2\leq\dots
 \quad\text{and}\quad
 0<\lambda_{\ell,1}\leq\dots\leq\lambda_{\ell,\dim(V_\ell)}
\end{equation*}
and their corresponding $b$-orthonormal systems of eigenfunctions
\begin{equation*}
(u_1,u_2,u_3,\dots)
 \quad\text{and}\quad 
(u_{\ell,1},u_{\ell,2},\dots,u_{\ell,\dim(V_\ell)}).
\end{equation*}
The eigenvalue cluster is described by the index set
$J:=\{n+1,\dots,n+N\}$ and the spaces
$W := \operatorname{span}\{u_j\mid j\in J\}$ and
$W_\ell := \operatorname{span}\{u_{\ell,j}\mid j\in J\}$.
The cluster is contained in the interval $[A,B]$.
Furthermore, the following separation condition is assumed
(cf.\ Subsection~\ref{ss:Cluster}).
\begin{equation}
  \label{e:separationMorley}
   M_J:= \sup_{\ell\in I}
          \sup_{j\in J^C}
          \max_{k\in J} \frac{\lambda_k}{\abs{\lambda_{\ell,j}-\lambda_k}}
        < \infty
 .
\end{equation}

\begin{proposition}[$L^2$ control]\label{p:EVPL2control}
 Provided $\hnull\ll 1$, any eigenpair 
$(\lambda,u)\in\mathbb R\times W$ of \eqref{e:ExactBihEVP}
with $\norm{u}=1$ satisfies for some constant $C_{L^2}$ that
\begin{equation*}
 \norm{u-P_\ell u}
\leq
 \norm{u-\Lambda_\ell u}
 \leq C_{L^2}(1+M_J)\hnull^s\ennormnc{u-\Lambda_\ell u} .
\end{equation*}
\end{proposition}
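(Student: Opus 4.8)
\textbf{Proof plan for Proposition~\ref{p:EVPL2control}.}
The plan is to imitate the duality argument of Proposition~\ref{p:LinL2}, but adapted to the eigenvalue setting where the exact right-hand side of the linear problem solved by $\Lambda_\ell u$ involves $P_\ell u$ rather than $u$ itself. The first inequality $\norm{u-P_\ell u}\le \norm{u-\Lambda_\ell u}$ is immediate because $P_\ell$ is the $b$-orthogonal projection onto $W_\ell$ and $\Lambda_\ell u\in W_\ell$. For the second inequality, set $e:=u-\Lambda_\ell u$ and introduce the dual solution $z\in V$ with $a(z,v)=b(e,v)$ for all $v\in V$. Since $\Pi_\ell^0(\Lambda_\ell u - \mathcal C \Lambda_\ell u)=0$ by Proposition~\ref{p:companion}, I would split
\begin{equation*}
\norm{e}^2 = b(\mathcal C\Lambda_\ell u - \Lambda_\ell u, (1-\Pi_\ell^0)e) + a(z, u - \mathcal C\Lambda_\ell u),
\end{equation*}
exactly as in \eqref{e:duality1}. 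The first term is controlled by piecewise Poincar\'e, the discrete Friedrichs inequality, and \eqref{e:CompanionApproxStab}, giving a bound $\lesssim \hnull^3\ennormnc{e}^2$, which is of higher order and absorbs into the left-hand side once $\hnull\ll1$.

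For the main term $a(z,u-\mathcal C\Lambda_\ell u)$, I would split it (as in \eqref{e:duality2}) into $a_\nc(z,u-\Lambda_\ell u) + a_\nc(z,\Lambda_\ell u - \mathcal C\Lambda_\ell u)$. The second piece is handled by the integral-mean property \eqref{e:CompanionProj} of $\mathcal C$ together with the projection property \eqref{e:MorleyInterpolProjProp} of $\Imorl_\ell$: write $a_\nc(z,\Lambda_\ell u-\mathcal C\Lambda_\ell u)=a_\nc(z-\Imorl_\ell z,\Lambda_\ell u-\mathcal C\Lambda_\ell u)$ and bound by $\ennormnc{u-\Lambda_\ell u}\,\norm{(1-\Pi_\ell^0)D^2 z}$ using \eqref{e:CompanionApproxStab}. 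The first piece $a_\nc(z,u-\Lambda_\ell u)$ is where the eigenvalue structure enters: using Lemma~\ref{l:LambdaPLemma}, $a_\nc(\Lambda_\ell u,\Imorl_\ell z)=\lambda b_\nc(P_\ell u,\Imorl_\ell z)$, while $a(u,\Imorl_\ell z)=\lambda b(u,\Imorl_\ell z)$ from \eqref{e:ExactBihEVP}; combined with the Hessian projection property of $\Imorl_\ell$ this rewrites $a_\nc(z,u-\Lambda_\ell u)$ as $\lambda\, b(u - P_\ell u,\Imorl_\ell z) + (\text{a term involving } z - \Imorl_\ell z)$. The term $\lambda\, b(u-P_\ell u,\Imorl_\ell z)$ is estimated by $\lambda\norm{u-P_\ell u}\,(\norm{z}+\norm{z-\Imorl_\ell z})\lesssim \lambda\norm{u-\Lambda_\ell u}\,\norm{e}$ since $\norm{u-P_\ell u}\le\norm{u-\Lambda_\ell u}$ and $\norm{z}\lesssim\norm{e}$; the remaining $z-\Imorl_\ell z$ contribution is controlled by \eqref{e:MorleyInterpolApprxStab} as $\lesssim \lambda\norm{h_\ell^2}_\infty\norm{e}\,(\dots)$, again of higher order. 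Finally, the elliptic regularity of the dual problem and \citep{BlumRannacher1980,Grisvard1985} give $\norm{(1-\Pi_\ell^0)D^2 z}\lesssim\hnull^s\norm{z}_{H^{2+s}}\lesssim\hnull^s\norm{e}$, and the $\lambda$-factors are uniformly bounded by $B$ since the cluster lies in $[A,B]$.

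Collecting the estimates, one divides through by $\norm{e}$ to obtain $\norm{e}\lesssim \hnull^s\ennormnc{e} + \hnull^3\ennormnc{e} + (\text{higher-order }\hnull\text{ terms})$, and for $\hnull$ small enough the higher-order terms are dominated by $\hnull^s\ennormnc{e}$, yielding the claimed bound $\norm{u-\Lambda_\ell u}\le C_{L^2}(1+M_J)\hnull^s\ennormnc{u-\Lambda_\ell u}$; the factor $(1+M_J)$ appears because the various occurrences of $\norm{u-P_\ell u}$ or of quantities tied to $R_\ell u$ are related back to $\norm{u-\Lambda_\ell u}$ (or vice versa) through Proposition~\ref{p:Cluster:GlLambdalDiff}. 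The main obstacle I anticipate is the careful bookkeeping in the first piece $a_\nc(z,u-\Lambda_\ell u)$: because $\Lambda_\ell u$ is \emph{not} an eigenfunction, one cannot simply invoke Galerkin orthogonality, and one must use Lemma~\ref{l:LambdaPLemma} to extract the correct ``almost-eigenfunction'' identity and then keep track of the mismatch $u-P_\ell u$ versus $u-\Lambda_\ell u$ and of all the $\lambda$-dependent constants so that the final constant genuinely depends only on $B/A$, $M_J$, and the generic constants. A secondary subtlety is verifying that the nonconformity terms ($z-\Imorl_\ell z$ against the piecewise Hessian of $u-\Lambda_\ell u$, and the companion correction) are all genuinely higher order in $\hnull$ so that the absorption argument closes.
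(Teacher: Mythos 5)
Your first inequality and the general duality framework are fine, but the argument has a genuine gap at the consistency term, which is precisely where the eigenvalue structure enters. When you rewrite $a_\nc(z,u-\Lambda_\ell u)$ via Lemma~\ref{l:LambdaPLemma} (note in passing that you may not test \eqref{e:ExactBihEVP} with $\Imorl_\ell z\notin V$; you must test with $z\in V$ itself, which still leads to the same place), the correct identity reads
\begin{equation*}
a_\nc(z,u-\Lambda_\ell u)=\lambda\, b(u-P_\ell u,\, z)+\lambda\, b(P_\ell u,\, z-\Imorl_\ell z),
\end{equation*}
and the first summand is fatal. You bound it (in your notation, $\lambda b(u-P_\ell u,\Imorl_\ell z)$) by $\lambda\norm{u-P_\ell u}\,\norm{z}\lesssim\lambda\norm{u-\Lambda_\ell u}\,\norm{e}=\lambda\norm{e}^2$. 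This is exactly the size of the left-hand side $\norm{e}^2$ and carries no power of $\hnull$ at all, so it is not among your ``higher-order $\hnull$ terms'' and cannot be absorbed: the generic constant times $\lambda\le B$ need not be below $1$. The only way to make this term small is to already know $\norm{u-P_\ell u}\lesssim(1+M_J)\hnull^s\ennormnc{u-\Lambda_\ell u}$, i.e.\ the statement being proved, so the argument is circular. This is the structural reason a direct Aubin--Nitsche argument for $\Lambda_\ell u$ fails: $\Lambda_\ell u$ solves the discrete problem with data $\lambda P_\ell u$ rather than $\lambda u$, so Galerkin orthogonality against $u$ is violated by an $L^2$-error-sized amount.

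The paper sidesteps this entirely. Its proof is the three-line combination of Proposition~\ref{p:Cluster:GlLambdalDiff} ($\norm{u-\Lambda_\ell u}\le(1+M_J)\norm{u-R_\ell u}$ --- this is where the factor $(1+M_J)$ actually originates), Proposition~\ref{p:LinL2} applied to the quasi-Ritz projection $R_\ell u$ with data $f=\lambda u$ (the duality argument lives there, and it closes because $R_\ell u$ satisfies the discrete linear problem with the \emph{same} data as $u$), and Proposition~\ref{p:LinBestAppx} to convert $\ennormnc{u-R_\ell u}$ back into $\ennormnc{u-\Lambda_\ell u}$ up to oscillation terms, which are absorbed for $\hnull\ll1$. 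If you wish to keep your structure, you must invoke Proposition~\ref{p:Cluster:GlLambdalDiff} \emph{before} entering the duality argument rather than inside it; the secondary nonconformity terms you worry about at the end are indeed harmless and are handled exactly as in Proposition~\ref{p:LinL2}.
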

\begin{proof}
 The combination of Proposition~\ref{p:Cluster:GlLambdalDiff} with
 Proposition~\ref{p:LinL2} and Proposition~\ref{p:LinBestAppx}
 leads to
\begin{equation*}
  \norm{u-\Lambda_\ell u}
 \lesssim (1+M_J)\hnull^s
    (\ennormnc{u-\Lambda_\ell u} + \osc_2(\lambda u,\tri_\ell)).
\end{equation*}
Provided $\hnull\ll1$, the oscillation term can be absorbed.
\end{proof}

The following proposition is based on the comparison result from
Proposition~\ref{p:LinBestAppx} and states a best-approximation
property for $\Lambda_\ell u$.

\begin{proposition}[best-approximation result]\label{p:EVPcomparison}
Provided  $\hnull\ll 1$,
any eigenfunction $u\in W$ of \eqref{e:ExactBihEVP} with
$\norm{u}=1$ satisfies
\begin{equation*}
 \ennormnc{u-\Lambda_\ell u}
 \lesssim \norm{(1-\Pi_\ell^0) D^2u}_{L^2(\Omega)}.
\end{equation*}
\end{proposition}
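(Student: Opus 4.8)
The plan is to reduce the best-approximation result for the eigenvalue cluster to the linear best-approximation result of Proposition~\ref{p:LinBestAppx}. The function $\Lambda_\ell u$ is, by definition \eqref{e:Cluster:defLambda}, the $b$-orthogonal projection onto $W_\ell$ of the quasi-Ritz projection $R_\ell u$, where $R_\ell u$ solves $a_\nc(R_\ell u,v_\ell)=b(f,v_\ell)$ for all $v_\ell\in V_\ell$ with $f:=\lambda u$. So $R_\ell u$ is precisely the Morley discrete solution $u_\ell$ of \eqref{e:LinProbDiscr} with right-hand side $f=\lambda u$. First I would invoke Proposition~\ref{p:LinBestAppx} with this $f$ to obtain
\begin{equation*}
 \ennormnc{u-R_\ell u}
 \lesssim \norm{(1-\Pi_\ell^0)D^2 u}_{L^2(\Omega)} + \osc_2(\lambda u,\tri_\ell).
\end{equation*}

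Next I would pass from $R_\ell u$ to $\Lambda_\ell u$ by the triangle inequality $\ennormnc{u-\Lambda_\ell u}\leq \ennormnc{u-R_\ell u}+\ennormnc{R_\ell u-\Lambda_\ell u}$. For the second term I would combine the $b,\nc$-norm bound $\norm{R_\ell u-\Lambda_\ell u}_{b,\nc}\leq M_J\norm{u-R_\ell u}_{b,\nc}$ from Proposition~\ref{p:Cluster:GlLambdalDiff} with the fact that $R_\ell u-\Lambda_\ell u\in V_\ell$ lies in the finite-dimensional discrete space, on which the inverse-type estimate $\ennormnc{v_\ell}\lesssim \hnull^{-2}\norm{v_\ell}$ holds. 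The $b$-norm error $\norm{u-R_\ell u}$ is in turn controlled by the $L^2$ estimate of Proposition~\ref{p:LinL2}, which gives $\norm{u-R_\ell u}\lesssim \hnull^s(\ennormnc{u-R_\ell u}+\osc_2(\lambda u,\tri_\ell))$. Chaining these, $\ennormnc{R_\ell u-\Lambda_\ell u}\lesssim M_J\hnull^{s-2}\hnull^2(\ennormnc{u-R_\ell u}+\osc)$; the powers of $\hnull$ combine favorably since $s>0$, so for $\hnull\ll 1$ this is a lower-order contribution and can be absorbed. Finally I would absorb the oscillation term $\osc_2(\lambda u,\tri_\ell)=\norm{h_\ell^2(1-\Pi_\ell^2)\lambda u}$, which scales like $\hnull^2\norm{u}$ (times $\lambda$), against the left-hand side once $\hnull$ is small enough, exactly as in Proposition~\ref{p:EVPL2control}; note $\lambda\in[A,B]$ so the constant depends only on the cluster bounds.

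The main obstacle I anticipate is keeping track of the powers of $\hnull$ when moving from the $b,\nc$-norm estimate for $R_\ell u-\Lambda_\ell u$ back to the energy norm: one must verify that the inverse estimate on $V_\ell$ combined with the $L^2$-to-energy duality bound from Proposition~\ref{p:LinL2} genuinely produces a positive power $\hnull^s$ of smallness rather than merely $\hnull^0$, so that the term is truly absorbable. A secondary subtlety is that Proposition~\ref{p:Cluster:GlLambdalDiff} and Proposition~\ref{p:LinL2} are stated with $u$ normalised by $\norm{u}_b=1$, so one should check that replacing $f$ by $\lambda u$ introduces only the bounded factor $\lambda\leq B$, which does not affect the generic constants. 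Apart from that, everything is assembly of results already available in the excerpt together with a small-mesh absorption argument identical in spirit to the proof of Proposition~\ref{p:EVPL2control}.
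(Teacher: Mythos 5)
Your overall skeleton (triangle inequality $\ennormnc{u-\Lambda_\ell u}\le\ennormnc{u-R_\ell u}+\ennormnc{R_\ell u-\Lambda_\ell u}$, Proposition~\ref{p:LinBestAppx} with $f=\lambda u$ for the first term, absorption of $\osc_2(\lambda u,\tri_\ell)$ for $\hnull\ll1$) coincides with the paper's proof. The gap is exactly where you suspected it: your treatment of $\ennormnc{R_\ell u-\Lambda_\ell u}$ does not close. The chain ``inverse estimate $\ennormnc{v_\ell}\lesssim \norm{h_\ell^{-2}v_\ell}$, then Proposition~\ref{p:Cluster:GlLambdalDiff}, then Proposition~\ref{p:LinL2}'' yields a factor of order $\hnull^{-2}\cdot M_J\cdot\hnull^{s}=M_J\,\hnull^{s-2}$, which blows up as $\hnull\to0$ because $s\le1<2$; the extra factor $\hnull^{2}$ you insert in ``$\hnull^{s-2}\hnull^{2}$'' is unaccounted for --- nothing in Propositions~\ref{p:Cluster:GlLambdalDiff} or~\ref{p:LinL2} supplies it. (On adaptive meshes the situation is even worse, since the inverse estimate involves $\min_{T}h_T$, not $\hnull$.) So this term is not absorbable by your route.

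The paper avoids the inverse estimate altogether by testing with $\varphi_\ell:=R_\ell u-\Lambda_\ell u\in V_\ell$ itself: the definition of $R_\ell$ and Lemma~\ref{l:LambdaPLemma} give the identity
\begin{equation*}
\ennormnc{R_\ell u-\Lambda_\ell u}^2
= a_\nc(R_\ell u-\Lambda_\ell u,\varphi_\ell)
= \lambda\, b(u-P_\ell u,\varphi_\ell),
\end{equation*}
so that Cauchy--Schwarz plus the discrete Friedrichs inequality of Corollary~\ref{c:dFMorley} (a mesh-size-\emph{independent} bound $\norm{\varphi_\ell}\lesssim\ennormnc{\varphi_\ell}$, i.e.\ the direction opposite to an inverse estimate) yield $\ennormnc{R_\ell u-\Lambda_\ell u}\lesssim\lambda\norm{u-P_\ell u}$. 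The $L^2$ control of Proposition~\ref{p:EVPL2control} then gives $\lambda(1+M_J)\hnull^{s}\ennormnc{u-\Lambda_\ell u}$, which carries a genuine positive power of $\hnull$ and can be absorbed. Replacing your inverse-estimate step by this Galerkin-type identity repairs the argument; without it, the proof as written fails.
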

\begin{proof}
Recall that the quasi-Ritz projection
$R_\ell u$ solves \eqref{e:LinProbDiscr} with
right-hand side $f=\lambda u$.
The triangle inequality proves
\begin{equation*}
 \ennormnc{u-\Lambda_\ell u}
\leq
\ennormnc{u-R_\ell u} + \ennormnc{R_\ell u - \Lambda_\ell u} .
\end{equation*}
Set $\varphi_\ell :=R_\ell u - \Lambda_\ell u$. The definition
of $R_\ell$ and the discrete problem
(cf.\ Lemma~\ref{l:LambdaPLemma}) prove that
\begin{equation*}
 \ennormnc{R_\ell u - \Lambda_\ell u}^2
=
a_\nc(R_\ell u - \Lambda_\ell u,\varphi_\ell)
=
\lambda b(u-P_\ell u, \varphi_\ell) .
\end{equation*}
Hence, the Cauchy and discrete Friedrichs inequalities
(Corollary~\ref{c:dFMorley}) and the $L^2$
control from Proposition~\ref{p:EVPL2control} prove that
\begin{equation*}
 \ennormnc{R_\ell u - \Lambda_\ell u}
\lesssim
\lambda (1+M_J) \hnull^s
\ennormnc{u-\Lambda_\ell u}.
\end{equation*}
The combination of the foregoing estimates with 
Proposition~\ref{p:LinBestAppx} results in
\begin{equation*}
  \ennormnc{u-\Lambda_\ell u}
\lesssim
\norm{(1-\Pi_\ell^0) D^2 u}_{L^2(\Omega)}
+
\lambda (1+M_J) \hnull^s \ennormnc{u-\Lambda_\ell u}
+
\osc_2 (\lambda u,\tri_\ell) .
\end{equation*}
If $\hnull\ll 1$ is sufficiently small, the higher-order terms
on the right-hand side can be absorbed.
\end{proof}

\subsection{A Nonstandard Quasi-Ritz Projection}
\label{ss:nonstandardritz}

This subsection introduces the setting which is necessary for the 
eigenvalue estimates of Subsection~\ref{ss:evalest}.

Define $\widehat V_\ell := V + V_\ell$ as the sum of the continuous
and the discrete space.
Given $f\in V$, let $u\in V$ denote the solution to
\eqref{e:LinProbExact}, namely
\begin{equation*}
 a(u,v) = b(f,v) \quad\text{for all }v\in V .
\end{equation*}
The quasi-Ritz projection 
$\widehat R_\ell u\in\widehat V_\ell$ is defined as the solution of
\begin{equation*}
 a_\nc(\widehat R_\ell u, \hat v_\ell ) = b(f,\hat v_\ell)
 \quad\text{for all } \hat v_\ell \in \widehat V_\ell. 
\end{equation*}

\begin{remark}
 This definition corresponds to the definition
 of $R_\ell$ of Subsection~\ref{ss:Cluster} with $H_\ell$ replaced
 by $\widehat V_\ell$. 
 It should be 
 emphasised that in the present case there is an inclusion
 $V\subseteq \widehat V_\ell$. This is an admissible choice in the
 framework of Subsection~\ref{ss:Cluster}.
\end{remark}

This setting leads to a new view on
nonconforming finite element schemes in the following sense:
Both $V$ and $V_\ell$ are subspaces of the space $\widehat V_\ell$
and the solutions $u\in V$ and $u_\ell \in V_\ell$ of 
\eqref{e:LinProbExact} and \eqref{e:LinProbDiscr}
are ``conforming approximations'' of 
$\widehat R_\ell u$.%
To the best of the author's know\-ledge, this is a new approach to
nonconforming finite elements
that has not been studied in the existing literature.

It is crucial that the nonconforming interpolation operator
$\Imorl_\ell$ is defined on $\widehat V_\ell$ as well as
$\widehat V_{\ell+m} = V + V_{\ell+m}$ with respect to a refined
triangulation $\tri_{\ell+m}$.
This operator and the conforming companion operator $\mathcal C $ from
Proposition~\ref{p:companion} establish suitable connections
between the spaces 
$V$, $V_\ell$, $\widehat V_\ell$, $V_{\ell+m}$ and $\widehat V_{\ell+m}$.
Those two operators displayed in Figure~\ref{f:evalestdiagram}
are the core of the analysis of $\widehat R_\ell$ 
which is essential to derive eigenvalue error estimates.
\begin{figure}
\centering
\begin{tikzpicture}[node distance=2.8cm, auto]
\node (A) {$V_\ell$};
\node (Z)[left of=A]{$V_{\ell+m}$};
\node (Y)[below of=Z]{$\widehat V_{\ell+m} := V + V_{\ell+m}$};
\node(B)[right of=A] {$ V $};
\node (C) [below of=A] {$\widehat V_\ell := V+V_\ell$};
\draw[->] (Z) -- (A) node[above,midway] {\footnotesize$\Imorl_\ell$};
\draw[transform canvas={yshift=0.5ex},->] (A) --(B) node[above,midway] {\footnotesize $\mathcal C $};
\draw[transform canvas={yshift=-0.5ex},->](B) -- (A) node[below,midway] {\footnotesize$\Imorl_\ell$}; 
\draw[transform canvas={xshift=-0.5ex},->] (C)--(A) node[left,midway]{\footnotesize$\Imorl_\ell$};
\draw[transform canvas={xshift=0.5ex},right hook->] (A)--(C) node[right,midway]{\footnotesize$\iota$};
\draw[right hook->](B)--(C) node[left,midway]  {\footnotesize$\iota$};
\draw[transform canvas={xshift=-0.5ex},->] (Y)--(Z) node[left,midway]{\footnotesize$\Imorl_{\ell+m}$};
\draw[transform canvas={xshift=0.5ex},right hook->] (Z)--(Y) node[right,midway]{\footnotesize$\iota$};
\draw[->] (Y) -- (A) node[left,midway] {\footnotesize$\Imorl_\ell$};
\end{tikzpicture}
 \caption[Mappings between the spaces $\widehat V_\ell$, 
          $\widehat V_{\ell+m}$, $V$, $V_\ell$
          and $V_{\ell+m}$]
         {Mappings between the spaces $\widehat V_\ell$, 
          $\widehat V_{\ell+m}$, $V$, $V_\ell$
          and $V_{\ell+m}$;
          $\iota$ is the inclusion. \label{f:evalestdiagram}}
\end{figure}
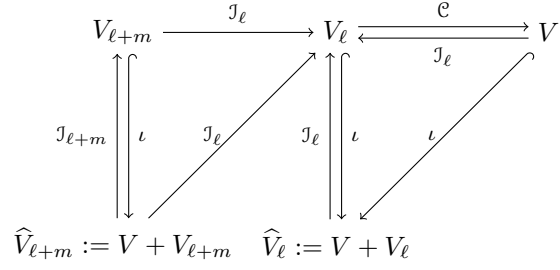

The following proposition gives an $L^2$ error estimate for the
quasi-Ritz projection $\widehat R_\ell$.

\begin{proposition}[$L^2$ error estimate for $\widehat R_\ell$]
\label{p:L2pesudoLinPoisson}
Let $u\in V$ solve the linear problem
\eqref{e:LinProbExact} with right-hand
side $f\in V$. Then, $\widehat R_\ell u$ satisfies the following
$L^2$ error estimate
\begin{equation*}
 \norm{u - \widehat R_\ell u}
\lesssim
\hnull^s
\ennormnc{u - \widehat R_\ell u}.
\end{equation*}
\end{proposition}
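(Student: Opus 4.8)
The plan is to follow the Aubin--Nitsche duality argument, but carried out entirely inside the enlarged space $\widehat V_\ell = V + V_\ell$, using the conforming companion operator $\mathcal C$ and the Morley interpolation operator $\Imorl_\ell$ as the bridge between the nonconforming and conforming worlds. Set $e := u - \widehat R_\ell u \in \widehat V_\ell$ and let $z \in V$ solve the dual problem $a(z,v) = b(e,v)$ for all $v \in V$. Since $f \in V \subseteq \widehat V_\ell$, the Galerkin orthogonality $a_\nc(e,\hat v_\ell) = 0$ holds for all $\hat v_\ell \in \widehat V_\ell$; in particular it holds for $z$ itself (because $z \in V \subseteq \widehat V_\ell$), and also for the Morley interpolant $\Imorl_\ell e \in V_\ell \subseteq \widehat V_\ell$.

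First I would write $\norm{e}^2 = b(e,u - \widehat R_\ell u)$ and insert the companion operator applied to the discrete part. More precisely, since $e = u - \widehat R_\ell u$ and $\Pi_\ell^0(\widehat R_\ell u - \mathcal C(\cdots))$-type orthogonality is not directly available for a general element of $\widehat V_\ell$, I instead test the dual problem with $e$ is not allowed ($e \notin V$ in general), so I would split
\begin{equation*}
 \norm{e}^2 = b(e, u - \Imorl_\ell u) + b(e, \Imorl_\ell u - \widehat R_\ell u).
\end{equation*}
For the first term, $u - \Imorl_\ell u$ is controlled in $L^2$ by $\hnull^2 \norm{D_\nc^2 (1-\Imorl_\ell)u}$ via Proposition~\ref{p:IlEstimate}, and $\norm{D_\nc^2(1-\Imorl_\ell)u} = \norm{(1-\Pi_\ell^0)D^2 u} \le \ennormnc{e}$ by the projection property \eqref{e:MorleyInterpolProjProp} and Pythagoras; combined with a Friedrichs inequality this gives a bound of the form $\hnull^s \norm{e}\,\ennormnc{e}$ after invoking the elliptic regularity estimate $\norm{z}_{H^{2+s}} \lesssim \norm{e}$ for the dual solution together with $\norm{(1-\Pi_\ell^0)D^2 z} \lesssim \hnull^s\norm{z}_{H^{2+s}}$. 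The second term $b(e, \Imorl_\ell u - \widehat R_\ell u)$ is the delicate one: here $w_\ell := \Imorl_\ell u - \widehat R_\ell u$ lies in $\widehat V_\ell$, and I would use the dual problem $b(e, w_\ell) = a(z, w_\ell)$ — valid because $b$ and $b_\nc$ agree, but one must be careful that $a(z,\cdot)$ is only defined on $V$. The fix is to write $b(e,w_\ell) = a_\nc(z, w_\ell)$ using that $z \in V$ and $a_\nc|_{V\times V} = a$, noting $a_\nc(z,\cdot)$ extends to $\widehat V_\ell$; then apply Galerkin orthogonality $a_\nc(e, w_\ell) = 0$ to get $a_\nc(z,w_\ell) = a_\nc(z, w_\ell) - a_\nc(e,w_\ell) = a_\nc(z - e, w_\ell)$... this needs rearrangement.

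A cleaner route, which I expect to be the intended one: start from $\norm{e}^2 = a(z,e)$ — but $e \notin V$, so instead use $\norm{e}^2 = b(e,e) = b(e, u) - b(e, \widehat R_\ell u)$ and handle each piece by inserting $\Imorl_\ell$ and $\mathcal C$ appropriately, exactly mirroring the proof of Proposition~\ref{p:LinL2}. Concretely, I would mimic \eqref{e:duality1}--\eqref{e:duality2}: write $\norm{e}^2 = a_\nc(z, u - \widehat R_\ell u) + (\text{piecewise consistency terms})$, where the consistency terms vanish because $z \in V$ is conforming and $\Pi_\ell^0 D_\nc^2 \Imorl_\ell = D_\nc^2 \Imorl_\ell$ kills the relevant cross terms. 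Then $a_\nc(z, u - \widehat R_\ell u) = a_\nc(z - \Imorl_\ell z, u - \widehat R_\ell u)$ by Galerkin orthogonality applied to $\Imorl_\ell z \in V_\ell \subseteq \widehat V_\ell$, and Cauchy--Schwarz gives $\le \norm{D_\nc^2(1-\Imorl_\ell)z}\,\ennormnc{e} = \norm{(1-\Pi_\ell^0)D^2 z}\,\ennormnc{e} \lesssim \hnull^s \norm{z}_{H^{2+s}}\,\ennormnc{e} \lesssim \hnull^s \norm{e}\,\ennormnc{e}$, using the regularity estimate of \citep{BlumRannacher1980,Grisvard1985} for the dual solution $z$ and the stability $\norm{z}_{H^{2+s}} \lesssim \norm{e}$. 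Dividing by $\norm{e}$ finishes the proof.

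The main obstacle I anticipate is the bookkeeping of the consistency/nonconformity terms: one must verify that testing the dual problem against the Morley interpolant of $z$ (rather than against $u - \widehat R_\ell u$ directly, which is illegitimate since that difference is not in $V$) produces only terms of the form $((1-\Pi_\ell^0)D^2 z, D_\nc^2 (\cdots))$ and $(\text{something}, (1-\Pi_\ell^0) e)$, both of which carry an extra power of $\hnull$. This is precisely where the projection property \eqref{e:MorleyInterpolProjProp} of $\Imorl_\ell$ and the integral-mean properties \eqref{e:CompanionProj} of $\mathcal C$ do the essential work, just as in Proposition~\ref{p:LinL2}; the difference here is merely that the "discrete" space is $\widehat V_\ell$ and the companion operator is applied to the $\widehat V_\ell$-element $\widehat R_\ell u$ — but since $\mathcal C$ is only defined on $V_\ell$, one should either decompose $\widehat R_\ell u$ suitably or, more simply, avoid $\mathcal C$ altogether on the nonconforming side by exploiting that $\widehat R_\ell u \in \widehat V_\ell$ already and only the conforming dual solution $z$ needs no correction. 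Once the orthogonality structure is set up correctly, the remaining estimates are the standard interpolation and regularity bounds already available from Proposition~\ref{p:IlEstimate} and the cited regularity results, so no further difficulty arises.
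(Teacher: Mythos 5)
Your argument breaks down at the orthogonality you invoke. The Galerkin orthogonality $a_\nc(u-\widehat R_\ell u,\hat v_\ell)=0$ does \emph{not} hold for all $\hat v_\ell\in\widehat V_\ell$: by definition of $\widehat R_\ell$ one has $a_\nc(u-\widehat R_\ell u,\hat v_\ell)=a_\nc(u,\hat v_\ell)-b(f,\hat v_\ell)$, which is the (generally nonzero) consistency error whenever $\hat v_\ell\notin V$; if it vanished on all of $\widehat V_\ell$, testing with $\hat v_\ell=u-\widehat R_\ell u\in\widehat V_\ell$ would force $u=\widehat R_\ell u$. The orthogonality holds only against $V$, i.e.\ $u$ is the $a_\nc$-orthogonal projection of $\widehat R_\ell u$ onto $V$, as the paper's remark states. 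This invalidates your step $a_\nc(z,e)=a_\nc(z-\Imorl_\ell z,e)$, since $\Imorl_\ell z\in V_\ell\setminus V$. Worse, the quantity you pivot on is degenerate: because $z\in V$, the \emph{correct} orthogonality gives $a_\nc(z,u-\widehat R_\ell u)=0$ identically, so your ``cleaner route'' collapses to $\norm{e}^2=(\text{consistency terms})$, and those terms --- which you assert vanish --- would have to carry the entire estimate. The auxiliary identity $b(e,w_\ell)=a_\nc(z,w_\ell)$ for $w_\ell\in\widehat V_\ell\setminus V$ is likewise unavailable: the dual problem only yields $b(e,v)=a(z,v)$ for $v\in V$.

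The missing idea is to apply duality only to a genuinely conforming surrogate of the error. With $\hat e:=u-\widehat R_\ell u$ the paper splits
\begin{equation*}
\norm{\hat e}^2=b((1-\Imorl_\ell)\hat e,\hat e)+b((1-\mathcal C)\Imorl_\ell\hat e,\hat e)+b(\mathcal C\Imorl_\ell\hat e,\hat e),
\end{equation*}
bounds the first two terms by $\hnull^2\ennormnc{\hat e}\,\norm{\hat e}$ via \eqref{e:MorleyInterpolApprxStab} and \eqref{e:CompanionApproxStab}, and for the last term uses $\mathcal C\Imorl_\ell\hat e\in V$ together with the legitimate orthogonality $a(z,\hat e)=0$ to write $b(\mathcal C\Imorl_\ell\hat e,\hat e)=a(z,\mathcal C\Imorl_\ell\hat e)=a_\nc(z,(\Imorl_\ell-1)\hat e)+a_\nc(z,(\mathcal C-1)\Imorl_\ell\hat e)$. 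The projection properties \eqref{e:MorleyInterpolProjProp} and \eqref{e:CompanionProj} then allow $D^2z$ to be replaced by $(1-\Pi_\ell^0)D^2z$ in both terms, and elliptic regularity for the dual solution $z$ delivers the factor $\hnull^s$. Your instinct to combine $\Imorl_\ell$, $\mathcal C$ and the dual solution is the right one, but without this decomposition the power of $\hnull$ cannot be extracted.
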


\begin{remark}
 The conformity $V\subseteq \widehat V_\ell$ shows that $u$ is
 the $a_\nc$-orthogonal projection of $\widehat R_\ell u$ onto
 $V$. Therefore, one may think of using a standard duality argument
 for the proof of the $L^2$ error control. 
 Indeed, this procedure can be
 applied, but it will not immediately lead to a right-hand side that is 
 explicit in the mesh-size $\hnull$.
 Therefore, the proof of Proposition~\ref{p:L2pesudoLinPoisson}
 employs a different technique based on the operators
 $\Imorl_\ell$ and $\mathcal C$ to obtain an estimate in terms of 
 $\hnull$.
\end{remark}

\begin{proof}[Proof of Proposition~\ref{p:L2pesudoLinPoisson}]
 Set $\hat e:= u - \widehat R_\ell u$ and 
 let $z\in V$ denote the solution to
\begin{equation*}
 a(z,w) = b(\hat e,w) \quad\text{for all } w\in V.
\end{equation*}
With the companion operator $\mathcal C$ from 
Proposition~\ref{p:companion} and the 
interpolation operator $\Imorl_\ell$, it follows that
\begin{equation}\label{e:rhatL2a}
 \norm{\hat e}^2
=
b((1-\mathcal C )\Imorl_\ell \hat e,\hat e)
+b((1-\Imorl_\ell)\hat e,\hat e)
+b(\mathcal C  \Imorl_\ell \hat e, \hat e) .
\end{equation}
The Cauchy inequality and the
error estimates \eqref{e:MorleyInterpolApprxStab} and
\eqref{e:CompanionApproxStab} bound the first two
terms on the right-hand side as
\begin{equation*}
 b((1-\mathcal C )\Imorl_\ell \hat e,\hat e)
+b((1-\Imorl_\ell)\hat e,\hat e)
\lesssim
\hnull^2 \ennormnc{\hat e} \norm{\hat e} .
\end{equation*}
Since $a(z,\hat e) =a(\hat e,z)=a(u-\widehat R_\ell u, z)=0$
 by the definition of $\widehat R_\ell$,
the remaining term of \eqref{e:rhatL2a} satisfies
\begin{equation*}
\begin{aligned}
 b(\mathcal C  \Imorl_\ell \hat e, \hat e)
&
= 
a(z,\mathcal C  \Imorl_\ell \hat e)
\\
&
=
a_\nc(z,(\Imorl_\ell-1) \hat e)
+
a_\nc(z,(\mathcal C -1) \Imorl_\ell \hat e).
\end{aligned}
\end{equation*}
The projection properties
\eqref{e:MorleyInterpolProjProp} and
\eqref{e:CompanionProj}
imply that
$D^2_\nc(\Imorl_\ell-1) \hat e$ as well as 
$D^2_\nc(\mathcal C -1) \Imorl_\ell \hat e$
are $L^2$-orthogonal onto piecewise constants.
This and the elliptic regularity 
show that
\begin{equation*}
\begin{aligned}
 &a_\nc(z,(\Imorl_\ell-1) \hat e)
+
a_\nc(z,(\mathcal C -1) \Imorl_\ell \hat e)
\\
&\quad
=
 ((1-\Pi_\ell^0) D^2 z,D^2_\nc(\Imorl_\ell-1) \hat e)_{L^2(\Omega)}
+
 ((1-\Pi_\ell^0) D^2 z,D^2_\nc(\mathcal C -1) \Imorl_\ell \hat e)_{L^2(\Omega)}
\\
&\quad
\lesssim
\hnull^s \norm{z}_{H^{2+s}(\Omega)}\ennormnc{\hat e}
\lesssim
\hnull^s \norm{\hat e}\ennormnc{\hat e} .
\end{aligned}
\end{equation*}
The combination of the above estimates concludes the proof.
\end{proof}

The next proposition states that the error
$u-\widehat R_\ell u$ in the energy norm is comparable
with the best-approximation of $Du$ by piecewise constants.

\begin{proposition}[comparison for $\widehat R_\ell$]
                    \label{p:PseudoBA}
Let $u\in V$ solve \eqref{e:LinProbExact} with right-hand
side $f\in V$. Then the quasi-Ritz projection
$\widehat R_\ell u$ satisfies
\begin{equation*}
 \ennormnc{u-\widehat R_\ell u}
\lesssim
\norm{(1-\Pi_\ell^0) D^2 u}_{L^2(\Omega)}
+
\osc_2 (f,\tri_\ell).
\end{equation*}
\end{proposition}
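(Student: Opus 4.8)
The plan is to mimic the proof of Proposition~\ref{p:LinBestAppx}, but now working in the enhanced space $\widehat V_\ell = V + V_\ell$ where $V$ plays the role of the ``discrete'' space. First I would use the projection property \eqref{e:MorleyInterpolProjProp} of the Morley interpolation operator, $\Pi_\ell^0 D^2_\nc = D^2_\nc\Imorl_\ell$, together with the fact that $u\in V\subseteq\widehat V_\ell$ solves the continuous problem while $\widehat R_\ell u$ solves the problem posed on $\widehat V_\ell$. Since $u$ is the $a_\nc$-orthogonal projection of $\widehat R_\ell u$ onto $V$, a Pythagoras-type splitting should give
\begin{equation*}
 \ennormnc{u-\widehat R_\ell u}^2
 = \ennormnc{\widehat R_\ell u - \Imorl_\ell u}^2
 + \ennormnc{u-\Imorl_\ell u}^2,
\end{equation*}
using that $D^2_\nc(\widehat R_\ell u - \Imorl_\ell u)$ and $D^2_\nc(u-\Imorl_\ell u) = (1-\Pi_\ell^0)D^2 u$ are $L^2$-orthogonal (the latter is a piecewise-constant-free quantity, the former has piecewise-constant Hessian on the image of $\Imorl_\ell$ acting on $\widehat V_\ell$). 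The term $\ennormnc{u-\Imorl_\ell u} = \norm{(1-\Pi_\ell^0)D^2 u}_{L^2(\Omega)}$ is exactly the first term in the claimed bound, so it remains to control $\ennormnc{\widehat R_\ell u - \Imorl_\ell u}$.

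For that term, set $\varphi_\ell := \widehat R_\ell u - \Imorl_\ell u\in\widehat V_\ell$ and test the defining equations. Using $a_\nc(\widehat R_\ell u,\varphi_\ell) = b(f,\varphi_\ell)$ and $a(u,\cdot)=b(f,\cdot)$ on $V$, I would write
\begin{equation*}
 \ennormnc{\varphi_\ell}^2
 = a_\nc(\widehat R_\ell u - u,\varphi_\ell)
 = b(f,\varphi_\ell) - a_\nc(u,\varphi_\ell).
\end{equation*}
Now I insert the companion operator $\mathcal C$ from Proposition~\ref{p:companion} applied to $\Imorl_\ell$ of something, or rather split $\varphi_\ell$ via $\mathcal C\Imorl_\ell\varphi_\ell\in V$: the key point is that $a_\nc(u,\varphi_\ell) = a(u,\mathcal C\Imorl_\ell\varphi_\ell) + a_\nc(u,\varphi_\ell-\mathcal C\Imorl_\ell\varphi_\ell)$ and $a(u,\mathcal C\Imorl_\ell\varphi_\ell) = b(f,\mathcal C\Imorl_\ell\varphi_\ell)$. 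Hence
\begin{equation*}
 \ennormnc{\varphi_\ell}^2
 = b\big(f,(1-\mathcal C\Imorl_\ell)\varphi_\ell\big)
 - a_\nc\big(u,(1-\mathcal C\Imorl_\ell)\varphi_\ell\big).
\end{equation*}
The integral-mean properties \eqref{e:MorleyInterpolProjProp}, \eqref{e:CompanionProj} show that $D^2_\nc(1-\mathcal C\Imorl_\ell)\varphi_\ell$ is $L^2$-orthogonal to piecewise constants, so the second term equals $((1-\Pi_\ell^0)D^2 u, D^2_\nc(1-\mathcal C\Imorl_\ell)\varphi_\ell)_{L^2(\Omega)}$. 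Then the combined approximation/stability bounds \eqref{e:MorleyInterpolApprxStab} and \eqref{e:CompanionApproxStab} give $\norm{h_\ell^2(1-\mathcal C\Imorl_\ell)\varphi_\ell}_{L^2(\Omega)} \lesssim \norm{D^2_\nc(1-\mathcal C\Imorl_\ell)\varphi_\ell}_{L^2(\Omega)}\cdot\text{(something)}$, more precisely both terms are bounded by a constant times $\ennormnc{\varphi_\ell}$, yielding
\begin{equation*}
 \ennormnc{\varphi_\ell}^2
 \lesssim \big(\norm{h_\ell^2 f}_{L^2(\Omega)} + \norm{(1-\Pi_\ell^0)D^2 u}_{L^2(\Omega)}\big)\,\ennormnc{\varphi_\ell}.
\end{equation*}
Dividing by $\ennormnc{\varphi_\ell}$ and invoking the efficiency estimate $\norm{h_\ell^2 f}_{L^2(\Omega)} \lesssim \norm{(1-\Pi_\ell^0)D^2 u}_{L^2(\Omega)} + \osc_2(f,\tri_\ell)$ (the same Verf\"urth-type argument cited for Proposition~\ref{p:LinBestAppx}) finishes the proof.

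The step I expect to need the most care is verifying that the relevant Hessian differences are genuinely $L^2$-orthogonal to piecewise constants \emph{on the enhanced space} $\widehat V_\ell$ — in particular that $\Imorl_\ell$ restricted to $\widehat V_\ell$ still has image on which $\Pi_\ell^0 D^2_\nc$ acts as identity, which is \eqref{e:MorleyInterpolProjProp} but must be applied to arguments in $V+V_\ell$ rather than only in $V$; and correspondingly that $\mathcal C$ applied to $\Imorl_\ell\varphi_\ell\in V_\ell$ retains property \eqref{e:CompanionProj}. Both are already built into the cited statements, so the argument is essentially a bookkeeping exercise once the right splitting is chosen; the only subtlety is choosing to interpolate-then-enrich ($\mathcal C\Imorl_\ell$) rather than attempting to apply $\mathcal C$ directly to $\varphi_\ell\in\widehat V_\ell$, since $\mathcal C$ is only defined on $V_\ell$.
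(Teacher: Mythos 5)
Your overall strategy is the paper's: split off the interpolation error, test the remainder $\hat\varphi_\ell:=\widehat R_\ell u-\Imorl_\ell u$ against itself, insert $\mathcal C\Imorl_\ell\hat\varphi_\ell\in V$, and use the two projection properties together with the efficiency bound for $\norm{h_\ell^2 f}_{L^2(\Omega)}$. However, two of your claimed identities are false, and both fail for the same reason: unlike $u_\ell$ in Proposition~\ref{p:LinBestAppx}, the quasi-Ritz projection $\widehat R_\ell u$ lies in $\widehat V_\ell=V+V_\ell$ and does \emph{not} have a piecewise constant Hessian, so $D^2_\nc(\widehat R_\ell u-\Imorl_\ell u)$ is not $L^2$-orthogonal to $(1-\Pi_\ell^0)D^2u$. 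Concretely, (i) the Pythagoras identity $\ennormnc{u-\widehat R_\ell u}^2=\ennormnc{\widehat R_\ell u-\Imorl_\ell u}^2+\ennormnc{u-\Imorl_\ell u}^2$ does not hold, since $\widehat R_\ell u-\Imorl_\ell u$ is not in the image of $\Imorl_\ell$, contrary to your parenthetical justification; and (ii) the identity $\ennormnc{\hat\varphi_\ell}^2=a_\nc(\widehat R_\ell u-u,\hat\varphi_\ell)$ requires $a_\nc(u-\Imorl_\ell u,\hat\varphi_\ell)=((1-\Pi_\ell^0)D^2u,D^2_\nc\hat\varphi_\ell)_{L^2(\Omega)}=0$, which is generally wrong for the same reason.

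Both defects are repairable without changing the architecture. For (i) the paper simply uses the triangle inequality, which suffices since only an upper bound is needed. For (ii) the correct manipulation exploits the self-adjointness of $\Pi_\ell^0$ via \eqref{e:MorleyInterpolProjProp}: $a_\nc(\Imorl_\ell u,\hat\varphi_\ell)=(\Pi_\ell^0D^2u,D^2_\nc\hat\varphi_\ell)_{L^2(\Omega)}=a_\nc(u,\Imorl_\ell\hat\varphi_\ell)$, whence $\ennormnc{\hat\varphi_\ell}^2=b(f,\hat\varphi_\ell)-a_\nc(u,\Imorl_\ell\hat\varphi_\ell)=b(f,\hat\varphi_\ell-\mathcal C\Imorl_\ell\hat\varphi_\ell)-a_\nc(u,(1-\mathcal C)\Imorl_\ell\hat\varphi_\ell)$. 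Alternatively you may keep your version and carry the extra term $((1-\Pi_\ell^0)D^2u,D^2_\nc\hat\varphi_\ell)_{L^2(\Omega)}\leq\norm{(1-\Pi_\ell^0)D^2u}_{L^2(\Omega)}\ennormnc{\hat\varphi_\ell}$, which is absorbed into the final bound. With either repair the remaining steps --- the orthogonality of $D^2_\nc(1-\mathcal C\Imorl_\ell)\hat\varphi_\ell$ to piecewise constants, the combined approximation and stability of $\Imorl_\ell$ and $\mathcal C$, and the efficiency $\norm{h_\ell^2f}_{L^2(\Omega)}\lesssim\norm{(1-\Pi_\ell^0)D^2u}_{L^2(\Omega)}+\osc_2(f,\tri_\ell)$ --- are exactly as in the paper and are correct as you state them.
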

\begin{proof}
 The triangle inequality shows for the
 nonconforming interpolation operator $\Imorl_\ell$ that
\begin{equation*}
 \ennormnc{u-\widehat R_\ell u}
  \leq \ennormnc{\widehat R_\ell u-\Imorl_\ell u}
   + \ennormnc{u- \Imorl_\ell u}.
\end{equation*}
Since $\ennormnc{u- \Imorl_\ell u}=\norm{(1-\Pi_\ell^0)D^2 u}$
by the projection property \eqref{e:MorleyInterpolProjProp},
it remains to estimate the first term on the right-hand side.
Set $\hat \varphi_\ell:=\widehat R_\ell u-\Imorl_\ell u$.
The definition of $\widehat R_\ell$, the projection property
\eqref{e:MorleyInterpolProjProp}
and the properties of the companion operator from 
Proposition~\ref{p:companion} yield
\begin{equation*}
\begin{aligned}
 \ennormnc{\widehat R_\ell u-\Imorl_\ell u}^2
 &
 = a_\nc(\widehat R_\ell u-\Imorl_\ell u,\hat \varphi_\ell)
 \\
 &
 = b(f,\hat \varphi_\ell) - a_\nc(u, \Imorl_\ell \hat \varphi_\ell)
 \\
 &
 = b(f,\hat \varphi_\ell-\mathcal C \Imorl_\ell\hat \varphi_\ell) 
    - a_\nc(u,  (1-\mathcal C ) \Imorl_\ell\hat \varphi_\ell) .
\end{aligned}
\end{equation*}
The triangle inequality and the
approximation and stability properties 
\eqref{e:MorleyInterpolApprxStab} and 
\eqref{e:CompanionApproxStab} show for the first term that
\begin{equation*}
b(f,\hat \varphi_\ell-\mathcal C \Imorl_\ell\hat \varphi_\ell)
\lesssim
 \norm{h_\ell^2 f} \ennormnc{\hat \varphi_\ell}.
\end{equation*}
The known efficiency %
\begin{equation*}
 \norm{h_\ell^2 f} 
 \lesssim \norm{(1-\Pi_\ell^0)D^2 u} + \osc_2(f,\tri_\ell)
\end{equation*}
follows from the arguments of \citet{VerfBook1996}.

The projection property \eqref{e:CompanionProj} of 
$\mathcal C $ and \eqref{e:CompanionApproxStab} reveal
\begin{equation*}
 a_\nc(u, (1-\mathcal C ) \Imorl_\ell\hat \varphi_\ell)
=
 ((1-\Pi_\ell^0) D^2 u, D^2 _\nc (1-\mathcal C ) \Imorl_\ell\hat \varphi_\ell)_{L^2(\Omega)}.
\end{equation*}
This and the stability properties 
\eqref{e:MorleyInterpolApprxStab} and 
\eqref{e:CompanionApproxStab} conclude the proof.
\end{proof}

\subsection{Eigenvalue Error Estimates}\label{ss:evalest}

This section extends the results of the foregoing 
subsection to eigenvalue problems.
This leads to eigenvalue error estimates for the Morley finite
element method.

Note that $\widehat V_\ell$ equipped with the scalar product
$a_\nc$ is a Hilbert space.
The space $\widehat V_\ell$ is a subspace of the finite product
$H^2(\tri_\ell):=\prod_{T\in\tri_\ell} H^2(\operatorname{int}(T))$
and  the embedding
$(\widehat V_\ell, \ennormnc{\cdot})\to (L^2(\Omega), \norm{\cdot})$
is compact for a fixed triangulation $\tri_\ell$
(for more details on such broken Sobolev spaces see \citep{BuffaOrtner2009}).
Hence, the eigenvalue problem
\begin{equation}\label{e:extEVPPoiss}
 a_\nc(\hat u_\ell, \hat v_\ell) 
= \hat \lambda_\ell b(\hat u_\ell, \hat v_\ell)
 \quad\text{for all } \hat v_\ell\in\widehat V_\ell
\end{equation}
has a countable and discrete spectrum
\begin{equation*}
 0<\hat\lambda_{\ell,1}\leq\hat\lambda_{\ell,2}\leq\cdots
\end{equation*}
with corresponding $b$-orthonormal eigenfunctions
$(\hat u_{\ell,1},\hat u_{\ell,2},\dots)$.
For an eigenvalue cluster described by the index set 
$J=\{n+1,\dots,n+N\}$,
the set 
$\widehat W_\ell := \operatorname{span}\{\hat u_{\ell,j}\mid j\in J\}$
describes the corresponding invariant subspace with
the $L^2$ projection $\widehat P_\ell$ onto $\widehat W_\ell$ and let
$\widehat\Lambda_\ell := \widehat P_\ell \circ \widehat R_\ell$.

The eigenvalue problem \eqref{e:extEVPPoiss} is related
to the (inverse of) a compact operator for each triangulation
$\tri_\ell$. 
The first important observation is that the spectrum is robust
under mesh-refinement.
\begin{proposition}\label{p:extSpecRobustPois}
Let $(\tri_\ell)_{\ell\in\mathbb N_0}$ be a sequence of
nested triangulations with $\hnull\ll 1$.
Then any $j\in\mathbb N$ and the constant $C$ from
the estimate in \eqref{e:MorleyInterpolApprxStab} satisfy

\begin{equation}\label{e:twosided}
  \frac{\lambda_{\ell,j}}{1+ C\norm{h_\ell}_\infty^4 \lambda_{\ell,j}}
 \leq
 \hat\lambda_{\ell,j}
 \leq \lambda_{\ell,j} . 
\end{equation}
In particular, if $\norm{h_\ell}_\infty\to 0$ as $\ell\to\infty$,
one has convergence
 $\hat\lambda_{\ell,j} \to \lambda_j$.
\end{proposition}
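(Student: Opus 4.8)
The plan is to establish the two-sided bound in \eqref{e:twosided} by comparing the Rayleigh quotients of the two eigenvalue problems \eqref{e:DiscrBihEVP} and \eqref{e:extEVPPoiss} via the min-max principle of Courant--Fischer, using the inclusion $V_\ell\subseteq\widehat V_\ell$ in one direction and the Morley interpolation operator $\Imorl_\ell:\widehat V_\ell\to V_\ell$ in the other. Recall that for both problems the eigenvalues admit the characterisation
\begin{equation*}
 \lambda_{\ell,j}
 = \min_{\substack{S\subseteq V_\ell\\ \dim S = j}}
   \max_{\substack{v_\ell\in S\setminus\{0\}}}
   \frac{\ennormnc{v_\ell}^2}{\norm{v_\ell}^2},
 \qquad
 \hat\lambda_{\ell,j}
 = \min_{\substack{\widehat S\subseteq \widehat V_\ell\\ \dim \widehat S = j}}
   \max_{\substack{\hat v\in \widehat S\setminus\{0\}}}
   \frac{\ennormnc{\hat v}^2}{\norm{\hat v}^2}.
\end{equation*}

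For the upper bound $\hat\lambda_{\ell,j}\leq\lambda_{\ell,j}$, I would simply note that any $j$-dimensional subspace $S\subseteq V_\ell$ is also a $j$-dimensional subspace of $\widehat V_\ell$; hence the minimum defining $\hat\lambda_{\ell,j}$ runs over a larger collection of subspaces and is therefore no larger. This is the easy direction and uses nothing beyond $V_\ell\subseteq\widehat V_\ell$ together with the fact that $a_\nc$ and $b$ agree on both spaces.

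For the lower bound, the key device is $\Imorl_\ell$. Given a $j$-dimensional subspace $\widehat S\subseteq\widehat V_\ell$ that is (near-)optimal for $\hat\lambda_{\ell,j}$, I would show that $\Imorl_\ell$ restricted to $\widehat S$ is injective for $\hnull\ll1$, so that $\Imorl_\ell\widehat S\subseteq V_\ell$ is again $j$-dimensional, and then control the Rayleigh quotient on $\Imorl_\ell\widehat S$ in terms of that on $\widehat S$. The projection property \eqref{e:MorleyInterpolProjProp} gives $\ennormnc{\Imorl_\ell\hat v} = \norm{\Pi_\ell^0 D^2_\nc\hat v}\leq\ennormnc{\hat v}$, controlling the numerator, while for the denominator the triangle inequality and \eqref{e:MorleyInterpolApprxStab} yield
\begin{equation*}
 \norm{\hat v} \leq \norm{\Imorl_\ell\hat v} + \norm{(1-\Imorl_\ell)\hat v}
 \leq \norm{\Imorl_\ell\hat v} + C\,\norm{h_\ell}_\infty^2\,\ennormnc{\hat v},
\end{equation*}
so $\norm{\Imorl_\ell\hat v} \geq \norm{\hat v} - C\norm{h_\ell}_\infty^2\ennormnc{\hat v}$. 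For $\hat v$ in the optimal $\widehat S$ one has $\ennormnc{\hat v}^2\leq\hat\lambda_{\ell,j}\norm{\hat v}^2$, whence $\norm{\Imorl_\ell\hat v}\geq(1-C\norm{h_\ell}_\infty^2\hat\lambda_{\ell,j}^{1/2})\norm{\hat v}$; combining the two estimates and using $\hat\lambda_{\ell,j}\leq\lambda_{\ell,j}$ (already proven) to replace $\hat\lambda_{\ell,j}$ by $\lambda_{\ell,j}$, then taking the maximum over $\hat v\in\widehat S$ and the minimum over admissible $\widehat S$, delivers
\begin{equation*}
 \lambda_{\ell,j} \leq \frac{\hat\lambda_{\ell,j}}{(1-C\norm{h_\ell}_\infty^2\lambda_{\ell,j}^{1/2})^2}
 = \frac{\hat\lambda_{\ell,j}}{1 - 2C\norm{h_\ell}_\infty^2\lambda_{\ell,j}^{1/2} + C^2\norm{h_\ell}_\infty^4\lambda_{\ell,j}},
\end{equation*}
which after rearranging and a mild crude bound absorbing the linear-in-$\norm{h_\ell}_\infty^2$ term gives the stated $\lambda_{\ell,j}/(1+C\norm{h_\ell}_\infty^4\lambda_{\ell,j})\leq\hat\lambda_{\ell,j}$ (possibly with an adjusted constant; the precise bookkeeping of which power of $\norm{h_\ell}_\infty$ and $\lambda_{\ell,j}$ appears is where the cited constant $C$ from \eqref{e:MorleyInterpolApprxStab} enters). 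The convergence $\hat\lambda_{\ell,j}\to\lambda_j$ then follows because $\lambda_{\ell,j}\to\lambda_j$ by the a priori theory (Proposition~\ref{p:EVPcomparison} bounds $\lambda_{\ell,j}$ uniformly and classical results give convergence), and the squeeze in \eqref{e:twosided} forces $\hat\lambda_{\ell,j}$ to the same limit once $\norm{h_\ell}_\infty^4\lambda_{\ell,j}\to0$.

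The main obstacle is the injectivity of $\Imorl_\ell$ on the optimal subspace $\widehat S$ and the associated requirement $\hnull\ll1$: if $\Imorl_\ell$ collapsed dimension, $\Imorl_\ell\widehat S$ would be an inadmissible test space. This is precisely handled by the estimate above, since $\norm{\Imorl_\ell\hat v}\geq(1-C\norm{h_\ell}_\infty^2\lambda_{\ell,j}^{1/2})\norm{\hat v}>0$ whenever $\norm{h_\ell}_\infty^2\lambda_{\ell,j}^{1/2}<1$, i.e., for $j$ fixed and the mesh fine enough; so the smallness hypothesis $\hnull\ll1$ in the proposition is exactly what guarantees injectivity and makes the Rayleigh-quotient comparison valid. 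A secondary technical point is that $\widehat S$ realising the min-max is genuinely $j$-dimensional and can be taken as a span of eigenfunctions $\hat u_{\ell,1},\dots,\hat u_{\ell,j}$, which is legitimate since \eqref{e:extEVPPoiss} is the eigenproblem of a compact selfadjoint resolvent on $(\widehat V_\ell,\ennormnc{\cdot})$.
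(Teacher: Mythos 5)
Your overall strategy --- min--max in both directions, with the inclusion $V_\ell\subseteq\widehat V_\ell$ for the upper bound and the Morley interpolation $\Imorl_\ell:\widehat V_\ell\to V_\ell$ applied to the span of the first $j$ eigenfunctions of \eqref{e:extEVPPoiss} for the lower bound --- is exactly the methodology of \citep{CarstensenGallistl2014} that the paper invokes, and the upper bound, the injectivity discussion, and the final squeeze argument are all fine. However, there is a genuine quantitative gap in your lower bound: your estimate delivers only an $O(\norm{h_\ell}_\infty^2)$ correction, not the $O(\norm{h_\ell}_\infty^4)$ correction claimed in \eqref{e:twosided}, and the ``mild crude bound absorbing the linear-in-$\norm{h_\ell}_\infty^2$ term'' does not exist: from
$\lambda_{\ell,j}\bigl(1-C\norm{h_\ell}_\infty^2\lambda_{\ell,j}^{1/2}\bigr)^2\leq\hat\lambda_{\ell,j}$
one only gets $\lambda_{\ell,j}-\hat\lambda_{\ell,j}\lesssim \norm{h_\ell}_\infty^2\lambda_{\ell,j}^{3/2}$, and no adjustment of the constant turns an $h^2$ term into an $h^4$ term uniformly as the mesh is refined. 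The loss occurs because you discard the Pythagoras gain in the numerator: you bound $\ennormnc{\Imorl_\ell\hat v}^2\leq\ennormnc{\hat v}^2\leq\hat\lambda_{\ell,j}\norm{\hat v}^2$, whereas the projection property \eqref{e:MorleyInterpolProjProp} gives the exact identity $\ennormnc{\Imorl_\ell\hat v}^2=\ennormnc{\hat v}^2-t^2$ with $t:=\ennormnc{(1-\Imorl_\ell)\hat v}$, while the denominator only loses $\norm{\Imorl_\ell\hat v}\geq\norm{\hat v}-C\norm{h_\ell}_\infty^2\,t$. Keeping both, the Rayleigh quotient of $\Imorl_\ell\hat v$ for normalised $\hat v$ is bounded by
\begin{equation*}
\frac{\hat\lambda_{\ell,j}-t^2}{\bigl(1-C\norm{h_\ell}_\infty^2 t\bigr)^2},
\end{equation*}
and maximising this expression over $t$ shows the maximum is attained at $t=C\norm{h_\ell}_\infty^2\hat\lambda_{\ell,j}$ (which is $O(\norm{h_\ell}_\infty^2)$, not $O(\hat\lambda_{\ell,j}^{1/2})$ as in your worst case), with value $\hat\lambda_{\ell,j}/(1-C^2\norm{h_\ell}_\infty^4\hat\lambda_{\ell,j})$; rearranging this yields precisely $\lambda_{\ell,j}/(1+C^2\norm{h_\ell}_\infty^4\lambda_{\ell,j})\leq\hat\lambda_{\ell,j}$. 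This optimisation step is the heart of the lower-bound argument in \citep{CarstensenGallistl2014} and is what the paper's proof delegates to that reference; without it, \eqref{e:twosided} as stated is not established (although your weaker $h^2$ bound would still suffice for the qualitative convergence $\hat\lambda_{\ell,j}\to\lambda_j$ and for Corollary~\ref{c:separationMhat}).
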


\begin{proof}
 The min-max principle \citep{WeinsteinStenger1972}
 shows, for any $j\in\mathbb N$, that
\begin{equation*}
 \hat\lambda_{\ell,j} \leq \min \{\lambda_j,\lambda_{\ell,j}\} .
\end{equation*}
An application of the methodology of
\citep[Thms.~1--2]{CarstensenGallistl2014} yields
the lower eigenvalue bound in case that
$\norm{h_\ell}_\infty$ is sufficiently small
\begin{equation*}
 \frac{\lambda_{\ell,j}}{1+ C\norm{h_\ell}_\infty^4 \lambda_{\ell,j}}
 \leq
 \hat\lambda_{\ell,j}
\end{equation*}
for some constant $C\approx 1$. 
In fact, the arguments from \citep{CarstensenGallistl2014} can be
applied in this modified setting because the Morley interpolation
operator $\Imorl_{\ell}$ is defined for functions in $\widehat V_\ell$
and satisfies the projection property \eqref{e:MorleyInterpolProjProp}
and the approximation and stability property
 \eqref{e:MorleyInterpolApprxStab}.

Altogether one has the two-sided estimate
\eqref{e:twosided}.
This implies the convergence
$\abs{\lambda_{\ell,j} - \hat\lambda_{\ell,j}}\to 0$ as $\ell\to\infty$.
The triangle inequality and the a~priori estimates
of \citep{CarstensenGallistl2014}
prove $\hat\lambda_{\ell,j} \to \lambda_j$.
\end{proof}

The robustness implies the following separation bound.

\begin{corollary} \label{c:separationMhat}
 Provided $\hnull\ll 1$,
 there exists a separation constant for the cluster $J$ in the
sense that
\begin{equation}\label{e:separationHat}
 \widehat M_J:= \sup_{\tri_\ell\in \mathbb T}
    \max_{j\in \mathbb N \setminus J}
    \max_{k\in J}
    \max\left\{
     \frac{\hat \lambda_{k,\ell}}{\abs{\lambda_j-\hat\lambda_{k,\ell}}} ,
     \frac{\hat \lambda_{k,\ell}}{\abs{\lambda_{\ell,j}-\hat\lambda_{k,\ell}}},
     \frac{\lambda_k}{\abs{\hat\lambda_{\ell,j}-\lambda_k}},
     \frac{\lambda_k}{\abs{\lambda_{\ell,j}-\lambda_k}}
        \right\}       
        < \infty .
\end{equation}
This formula uses the convention
$\lambda_{\ell,j} := \lambda_{\ell,\dim (V_\ell)}$
for $j>\dim (V_\ell)$.
\end{corollary}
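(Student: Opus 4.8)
The plan is to establish \eqref{e:separationHat} by combining the spectral robustness from Proposition~\ref{p:extSpecRobustPois} with the separation assumptions \eqref{e:separationMorley} already granted for the Morley scheme and the continuous problem, plus the convergence $\hat\lambda_{\ell,j}\to\lambda_j$. The four quotients inside the maximum pair up naturally: the last one is finite by hypothesis \eqref{e:separationMorley} (it is exactly $M_J$), so the real work concerns the first three, each of which compares a $\hat\lambda_{k,\ell}$ (for $k\in J$) or a $\lambda_k$ (for $k\in J$) against an eigenvalue of the \emph{other} family carrying an index $j\notin J$.

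First I would treat the first quotient $\hat\lambda_{k,\ell}/\lvert\lambda_j-\hat\lambda_{k,\ell}\rvert$ for $k\in J$, $j\notin J$. Since $\hat\lambda_{k,\ell}$ lies in $[A,B]$ by the upper bound $\hat\lambda_{k,\ell}\le\lambda_{\ell,k}$ together with $\hat\lambda_{k,\ell}\ge\lambda_{\ell,k}/(1+C\hnull^4\lambda_{\ell,k})$ and the a~priori convergence $\lambda_{\ell,k}\to\lambda_k\in[A,B]$, the numerator is uniformly bounded. For the denominator, the triangle inequality gives $\lvert\lambda_j-\hat\lambda_{k,\ell}\rvert\ge\lvert\lambda_j-\lambda_k\rvert-\lvert\lambda_k-\hat\lambda_{k,\ell}\rvert$; since $j\notin J$ and $k\in J$ do not split a multiple eigenvalue (this is precisely the content of the separation assumption), $\lvert\lambda_j-\lambda_k\rvert$ is bounded below by a positive constant depending only on the cluster, and $\lvert\lambda_k-\hat\lambda_{k,\ell}\rvert\to0$ by Proposition~\ref{p:extSpecRobustPois}, so for $\hnull$ sufficiently small the denominator is at least half of $\lvert\lambda_j-\lambda_k\rvert$. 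For the finitely many coarse triangulations where $\hnull$ is not yet small enough, finiteness of each individual quotient follows because $\hat\lambda_{k,\ell}\neq\lambda_j$ (which holds since $\hat\lambda_{k,\ell}\le\lambda_k<\lambda_j$ or $\hat\lambda_{k,\ell}\le\lambda_{\ell,k}$ and the strict separation of $\lambda_j$ from the cluster, combined with robustness, keeps them apart), and the supremum over that finite set is automatically finite; alternatively one simply notes that the triangulations in $\mathbb T$ with $\hnull\ge\delta$ can be excluded by refining $\tri_0$ enough at the outset. The second quotient $\hat\lambda_{k,\ell}/\lvert\lambda_{\ell,j}-\hat\lambda_{k,\ell}\rvert$ is handled identically, using $\lvert\lambda_{\ell,j}-\hat\lambda_{k,\ell}\rvert\ge\lvert\lambda_j-\lambda_k\rvert-\lvert\lambda_{\ell,j}-\lambda_j\rvert-\lvert\lambda_k-\hat\lambda_{k,\ell}\rvert$ and the a~priori convergence $\lambda_{\ell,j}\to\lambda_j$ together with the fact that $j\notin J$ forces $\lambda_j$ away from the cluster. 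The third quotient $\lambda_k/\lvert\hat\lambda_{\ell,j}-\lambda_k\rvert$ is symmetric: $\lvert\hat\lambda_{\ell,j}-\lambda_k\rvert\ge\lvert\lambda_j-\lambda_k\rvert-\lvert\hat\lambda_{\ell,j}-\lambda_j\rvert$, and $\hat\lambda_{\ell,j}\to\lambda_j$ by robustness.

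I expect the main obstacle to be bookkeeping rather than a deep difficulty: one must be careful that the index $j$ ranges over \emph{all} of $\mathbb N\setminus J$, including arbitrarily large $j$, so the lower bound on $\lvert\lambda_j-\lambda_k\rvert$ for $k\in J$ must be uniform in $j$. This is fine because for large $\lambda_j$ the gap $\lvert\lambda_j-\lambda_k\rvert$ is itself large (the $\lambda_j\to\infty$), so the danger is only the finitely many $j$ with $\lambda_j$ near the cluster, where \eqref{e:separationMorley} and the strict inequalities $\lambda_n<\lambda_{n+1}$, $\lambda_{n+N}<\lambda_{n+N+1}$ (implicit in the cluster not splitting a multiple eigenvalue) do the job. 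The same uniformity is needed in the discrete indices $j>\dim V_\ell$, where the convention $\lambda_{\ell,j}:=\lambda_{\ell,\dim V_\ell}$ places those quotients among already-controlled ones. Once these uniform lower bounds are in place, the numerators being bounded by $B$ (by robustness, $\hat\lambda_{k,\ell}\le\lambda_{\ell,k}\le B$ and $\lambda_k\le B$), every quotient is bounded by a constant depending only on $\Omega$, $\tri_0$, and the cluster, which is exactly the assertion.
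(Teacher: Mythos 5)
Your argument is correct and is essentially the paper's: the corollary is stated there without a written proof beyond the remark that the robustness of Proposition~\ref{p:extSpecRobustPois} implies it, and your combination of the two-sided estimate \eqref{e:twosided}, the standing separation assumption \eqref{e:separationMorley}, the uniform gap between the continuous cluster and the rest of the continuous spectrum, and triangle inequalities is exactly that intended argument made explicit. One small repair: for the second quotient your chain $\abs{\lambda_{\ell,j}-\hat\lambda_{k,\ell}}\geq\abs{\lambda_j-\lambda_k}-\abs{\lambda_{\ell,j}-\lambda_j}-\abs{\lambda_k-\hat\lambda_{k,\ell}}$ is not uniform over large $j\in J^C$ (where $\lambda_{\ell,j}$ need not be close to $\lambda_j$); bound instead $\abs{\lambda_{\ell,j}-\hat\lambda_{k,\ell}}\geq\abs{\lambda_{\ell,j}-\lambda_k}-\abs{\lambda_k-\hat\lambda_{k,\ell}}\geq\lambda_k/M_J-\abs{\lambda_k-\hat\lambda_{k,\ell}}$, which uses \eqref{e:separationMorley} directly and only the finitely many indices $k\in J$.
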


\begin{remark}
 The separation condition \eqref{e:separationHat} implies
 \eqref{e:separationMorley}  with $M_J\leq \widehat M_J$.
\end{remark}

This separation constant allows the use of the framework of
Subsection~\ref{ss:Cluster} where the space $V$ is
approximated by $\widehat V_\ell$.

\begin{proposition}[$L^2$ error estimate for $\widehat \Lambda_\ell$]
                   \label{p:L2pesudoevpPoiss}
 Provided $\hnull\ll 1$, 
 any eigenpair $(\lambda,u)\in \mathbb R \times W$
 of \eqref{e:ExactBihEVP} with $\norm{u}=1$ satisfies
\begin{equation*}
\norm{u-\Lambda_\ell u}
+
\norm{u-\widehat \Lambda_\ell u}
\lesssim
(1+\widehat M_J) \hnull^s
\norm{(1-\Pi_\ell^0) D^2 u} .
\end{equation*}
\end{proposition}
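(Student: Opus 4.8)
The plan is to bound the two summands $\norm{u-\Lambda_\ell u}$ and $\norm{u-\widehat\Lambda_\ell u}$ separately. In each case the recipe is the same: first trade the $L^2$ error for the energy error at the cost of a factor $\hnull^s(1+\widehat M_J)$, then bound the energy error by $\norm{(1-\Pi_\ell^0)D^2 u}$ plus an oscillation term, and finally argue that the oscillation term is of higher order and can be absorbed. Note that, since $\norm{u}=1$, the eigenfunction $u$ solves the linear problem \eqref{e:LinProbExact} with right-hand side $f=\lambda u\in V$.

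For the first summand I would simply concatenate Proposition~\ref{p:EVPL2control}, which for $\hnull\ll 1$ gives $\norm{u-\Lambda_\ell u}\leq C_{L^2}(1+M_J)\hnull^s\ennormnc{u-\Lambda_\ell u}$, with the best-approximation result of Proposition~\ref{p:EVPcomparison}, namely $\ennormnc{u-\Lambda_\ell u}\lesssim\norm{(1-\Pi_\ell^0)D^2 u}$, and then use $M_J\leq\widehat M_J$ (the remark following Corollary~\ref{c:separationMhat}). For the second summand the key point is that the abstract framework of Subsection~\ref{ss:Cluster} applies to the nonstandard pair $H:=V\subseteq\widehat V_\ell=:H_\ell$: in this reading the quasi-Ritz projection $\widehat R_\ell$, the $L^2$-projection $\widehat P_\ell$, and $\widehat\Lambda_\ell=\widehat P_\ell\circ\widehat R_\ell$ are precisely the abstract $R_\ell$, $P_\ell$ and $\Lambda_\ell$ of that subsection, the continuous eigenvalue problem is \eqref{e:ExactBihEVP}, and the separation constant required there is bounded by $\widehat M_J$ thanks to the term $\lambda_k/\abs{\hat\lambda_{\ell,j}-\lambda_k}$ in \eqref{e:separationHat}. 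Proposition~\ref{p:Cluster:GlLambdalDiff} then yields $\norm{u-\widehat P_\ell u}\leq\norm{u-\widehat\Lambda_\ell u}\leq(1+\widehat M_J)\norm{u-\widehat R_\ell u}$.

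It remains to estimate $\norm{u-\widehat R_\ell u}$. Proposition~\ref{p:L2pesudoLinPoisson}, applied with $f=\lambda u$, gives $\norm{u-\widehat R_\ell u}\lesssim\hnull^s\ennormnc{u-\widehat R_\ell u}$, and Proposition~\ref{p:PseudoBA} gives $\ennormnc{u-\widehat R_\ell u}\lesssim\norm{(1-\Pi_\ell^0)D^2 u}+\osc_2(\lambda u,\tri_\ell)$; concatenation with the previous step yields $\norm{u-\widehat\Lambda_\ell u}\lesssim(1+\widehat M_J)\hnull^s(\norm{(1-\Pi_\ell^0)D^2 u}+\osc_2(\lambda u,\tri_\ell))$. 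For the oscillation I would use the iterated Poincar\'e estimate $\inf_{q\in\mathcal P_2(T)}\norm{v-q}_{L^2(T)}\lesssim h_T^2\norm{(1-\Pi_\ell^0)D^2 v}_{L^2(T)}$; since $\norm{(1-\Pi_\ell^2)u}_{L^2(T)}=\inf_{q\in\mathcal P_2(T)}\norm{u-q}_{L^2(T)}$, this gives $\osc_2(\lambda u,\tri_\ell)=\lambda\norm{h_\ell^2(1-\Pi_\ell^2)u}\lesssim\lambda\hnull^4\norm{(1-\Pi_\ell^0)D^2 u}$, which for $\hnull\ll1$ is dominated by $\hnull^s\norm{(1-\Pi_\ell^0)D^2 u}$ because $s\leq1<4$, so the oscillation is absorbed exactly as in the proof of Proposition~\ref{p:EVPcomparison}. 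Combining the estimates for the two summands and using $M_J\leq\widehat M_J$ finishes the proof. The main obstacle is really just the first step for the second summand --- verifying that the hypotheses of Subsection~\ref{ss:Cluster} are met for $H_\ell=\widehat V_\ell$ with $V\subseteq\widehat V_\ell$ and that the relevant separation constant is controlled by $\widehat M_J$; after that, everything reduces to a routine concatenation of Propositions~\ref{p:L2pesudoLinPoisson}, \ref{p:PseudoBA}, \ref{p:EVPL2control} and~\ref{p:EVPcomparison} together with the oscillation bookkeeping.
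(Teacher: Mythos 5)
Your proposal is correct and follows essentially the same route as the paper: Proposition~\ref{p:EVPL2control} plus Proposition~\ref{p:EVPcomparison} for $\norm{u-\Lambda_\ell u}$, and Proposition~\ref{p:Cluster:GlLambdalDiff} in the setting $H=V\subseteq\widehat V_\ell=H_\ell$ combined with Propositions~\ref{p:L2pesudoLinPoisson} and~\ref{p:PseudoBA} for $\norm{u-\widehat\Lambda_\ell u}$. The only (harmless) deviation is that you bound $\osc_2(\lambda u,\tri_\ell)\lesssim\lambda\hnull^4\norm{(1-\Pi_\ell^0)D^2u}$ directly via an iterated Poincar\'e inequality, whereas the paper absorbs it through $\osc_2(\lambda u,\tri_\ell)\lesssim\hnull\norm{u-\Lambda_\ell u}$; both are valid.
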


\begin{proof}
 An immediate consequence of Proposition~\ref{p:Cluster:GlLambdalDiff}
 (where $H_\ell$ is replaced by $\widehat V_\ell$
  and $\Lambda_\ell$ is replaced  by $\widehat \Lambda_\ell$)
and Proposition~\ref{p:PseudoBA} 
reads
\begin{equation*}
 \norm{u-\widehat\Lambda_\ell u}
 \leq 
 (1+\widehat M_J)
 \norm{u-\widehat R_\ell u}
 \lesssim
 (1+\widehat M_J)
 \hnull^s(
 \norm{(1-\Pi_\ell^0) D^2 u}_{L^2(\Omega)}
+
\osc_2(\lambda u,\tri_\ell) 
 ).
\end{equation*}
Proposition~\ref{p:EVPL2control},
the best approximation result of 
Proposition~\ref{p:EVPcomparison} and $M_J\leq \widehat M_J$ imply
\begin{equation*}
 \norm{u-\Lambda_\ell u}
 \leq C_{L^2}(1+\widehat M_J)\hnull^s\norm{(1-\Pi_\ell^0) D^2 u} .
\end{equation*}
The sum of the preceding two displayed formulas concludes
the proof:
Since $\hnull\ll 1$,
the oscillation term 
$\osc_2(\lambda u,\tri_\ell)\lesssim \hnull \norm{u-\Lambda_\ell u}$
 can be absorbed.
\end{proof}

The next result states that the error of the eigenfunction approximation
$\widehat\Lambda_\ell u$ in $\widehat V_\ell$ is comparable with
the best-approximation of the Hessian by piecewise constants.

\begin{proposition}[comparison result for $\widehat \Lambda_\ell$]
                   \label{p:PseudoEVP_BApoiss}
Provided $\hnull\ll 1$, any eigenpair
 $(\lambda,u)\in \mathbb R \times W$ of \eqref{e:ExactBihEVP} 
 with $\norm{u}=1$ satisfies
\begin{equation*}
\ennormnc{(1-\widehat \Lambda_\ell) u}
\lesssim 
\norm{(1-\Pi_\ell^0) D^2 u} .
\end{equation*}
\end{proposition}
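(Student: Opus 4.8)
The plan is to follow the same structure as the proof of Proposition~\ref{p:PseudoBA}, but now at the level of the eigenvalue problem in $\widehat V_\ell$, using that $\widehat \Lambda_\ell u = \widehat P_\ell \widehat R_\ell u$ together with the fact (Lemma~\ref{l:LambdaPLemma}) that $\widehat R_\ell$ and $\widehat P_\ell$ commute. First I would apply the triangle inequality to split
\begin{equation*}
 \ennormnc{(1-\widehat\Lambda_\ell)u}
 \leq \ennormnc{u - \widehat R_\ell u} + \ennormnc{\widehat R_\ell u - \widehat\Lambda_\ell u}.
\end{equation*}
The first term is already controlled: Proposition~\ref{p:PseudoBA} with $f=\lambda u$ gives $\ennormnc{u-\widehat R_\ell u}\lesssim \norm{(1-\Pi_\ell^0)D^2 u} + \osc_2(\lambda u,\tri_\ell)$, and since $\hnull\ll 1$ the oscillation term is of higher order and can be absorbed at the end (it is bounded by $\hnull\,\norm{u-\widehat\Lambda_\ell u}\lesssim \hnull\,\ennormnc{u-\widehat\Lambda_\ell u}$ via the discrete Friedrichs inequality of Corollary~\ref{c:dFMorley}).

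The main work is the second term $\ennormnc{\widehat R_\ell u - \widehat\Lambda_\ell u}$. Setting $\hat\varphi_\ell := \widehat R_\ell u - \widehat\Lambda_\ell u \in \widehat V_\ell$ and testing the defining equations of $\widehat R_\ell$ and of $\widehat\Lambda_\ell$ (via Lemma~\ref{l:LambdaPLemma}, which reads $a_\nc(\widehat\Lambda_\ell u,\hat v_\ell) = \lambda\, b(\widehat P_\ell u,\hat v_\ell)$) yields
\begin{equation*}
 \ennormnc{\widehat R_\ell u - \widehat\Lambda_\ell u}^2
 = a_\nc(\widehat R_\ell u - \widehat\Lambda_\ell u, \hat\varphi_\ell)
 = \lambda\, b(u - \widehat P_\ell u, \hat\varphi_\ell).
\end{equation*}
Now the Cauchy--Schwarz inequality, the discrete Friedrichs inequality (Corollary~\ref{c:dFMorley}) to bound $\norm{\hat\varphi_\ell}\lesssim\ennormnc{\hat\varphi_\ell}$, and the $L^2$ estimate of Proposition~\ref{p:L2pesudoevpPoiss} for $\norm{u-\widehat P_\ell u}\leq\norm{u-\widehat\Lambda_\ell u}\lesssim (1+\widehat M_J)\hnull^s\norm{(1-\Pi_\ell^0)D^2 u}$ give
\begin{equation*}
 \ennormnc{\widehat R_\ell u - \widehat\Lambda_\ell u}
 \lesssim \lambda\,(1+\widehat M_J)\,\hnull^s\,\norm{(1-\Pi_\ell^0)D^2 u}.
\end{equation*}

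Combining the two contributions produces
\begin{equation*}
 \ennormnc{(1-\widehat\Lambda_\ell)u}
 \lesssim \norm{(1-\Pi_\ell^0)D^2 u} + \lambda(1+\widehat M_J)\hnull^s\ennormnc{(1-\widehat\Lambda_\ell)u} + \osc_2(\lambda u,\tri_\ell),
\end{equation*}
where I have re-expressed the right-hand side of Proposition~\ref{p:L2pesudoevpPoiss} back in terms of $\ennormnc{(1-\widehat\Lambda_\ell)u}$ is not needed; rather I keep $\norm{(1-\Pi_\ell^0)D^2 u}$ on the right. For $\hnull$ small enough the term $\lambda(1+\widehat M_J)\hnull^s\ennormnc{(1-\widehat\Lambda_\ell)u}$ is absorbed into the left-hand side, and $\osc_2(\lambda u,\tri_\ell)\lesssim\hnull\norm{u-\widehat\Lambda_\ell u}$ is likewise absorbed, leaving the claimed estimate. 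The only delicate point is the bookkeeping with the constants: one must make sure the smallness threshold on $\hnull$ depends only on $\lambda$ (equivalently on the cluster bound $B$) and on $\widehat M_J$, which is legitimate since $\lambda\le B$ for $u\in W$ and $\widehat M_J$ is finite by Corollary~\ref{c:separationMhat}; no circularity arises because Proposition~\ref{p:L2pesudoevpPoiss} has already been established independently.
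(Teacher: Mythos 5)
Your proposal is correct and follows essentially the same route as the paper: the same triangle-inequality split into $u-\widehat R_\ell u$ (handled by Proposition~\ref{p:PseudoBA}) and $\widehat R_\ell u-\widehat\Lambda_\ell u$, the same identity $\ennormnc{\hat\varphi_\ell}^2=\lambda\, b(u-\widehat P_\ell u,\hat\varphi_\ell)$ via Lemma~\ref{l:LambdaPLemma}, and the same absorption of the higher-order term using Corollary~\ref{c:dFMorley} and Proposition~\ref{p:L2pesudoevpPoiss} for $\hnull\ll1$. No gaps.
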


\begin{proof}
 The triangle inequality gives
\begin{equation*}
 \ennormnc{(1-\widehat \Lambda_\ell) u}
\leq
\ennormnc{(1-\widehat R_\ell) u}
+
\ennormnc{(\widehat R_\ell -\widehat\Lambda_\ell) u} .
\end{equation*}
Proposition~\ref{p:PseudoBA} implies that the first
term on the right-hand side is controlled by
$\norm{(1-\Pi_0) D^2u)}$.
Set $\hat\varphi_\ell := (\widehat R_\ell -\widehat\Lambda_\ell) u$.
The definition of $\widehat R_\ell$ (note that the right-hand side 
is $f:=\lambda u$) and 
Lemma~\ref{l:LambdaPLemma} 
(with $H_\ell$ replaced by $\widehat V_\ell$) lead to
\begin{equation*}
 \ennormnc{(\widehat R_\ell -\widehat\Lambda_\ell) u}^2
=
a_\nc((\widehat R_\ell -\widehat\Lambda_\ell) u, \hat\varphi_\ell)
=
\lambda b(u-\widehat P_\ell u,\hat\varphi_\ell)
\leq
\lambda \norm{u-\widehat P_\ell u} \,\norm{\hat\varphi_\ell}
.
\end{equation*}
The discrete Friedrichs inequality 
(Corollary~\ref{c:dFMorley}) shows that 
$\norm{\hat\varphi_\ell}\lesssim \ennormnc{\hat\varphi_\ell}$.
The $L^2$ error estimate
from Proposition~\ref{p:L2pesudoevpPoiss}
concludes the proof. Indeed, the resulting higher-order term
$(1+\widehat M_J)\lambda\hnull^s \ennormnc{(1-\widehat \Lambda_\ell) u}$
can be absorbed for $\hnull\ll 1$.
\end{proof}

The tools developed in this section lead to
the following eigenvalue error estimate

\begin{theorem}[eigenvalue error estimates]\label{t:RobustEstimates}
Provided $\hnull\ll 1$, it holds that
\begin{equation*}
\begin{aligned}
\max_{j\in J}
 \frac{\abs{\lambda_j - \lambda_{\ell,j}}}{\max\{\lambda_j,\lambda_{\ell,j}\}}
&\lesssim
 (1+ \widehat M_J^2 B^2)
  \sin^2_{a,\nc}\angle (W,W_\ell)
\\
&\lesssim
 (1+ \widehat M_J^2 B^2)
   \sup_{\substack{w\in W \\ \ennormnc{ w}=1}}
   \norm{(1-\Pi_\ell^0)D^2w}_{L^2(\Omega)}^2 .
\end{aligned}
 \end{equation*}

\end{theorem}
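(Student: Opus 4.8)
The plan is to interpose the auxiliary eigenvalue problem \eqref{e:extEVPPoiss} on $\widehat V_\ell=V+V_\ell$ between the exact problem on $V$ and the discrete one on $V_\ell$, so as to turn the single nonconforming problem into two genuinely \emph{conforming} ones to which the Knyazev--Osborn estimate of Theorem~\ref{t:Knyazev} applies. Since $V\subseteq\widehat V_\ell$ and $V_\ell\subseteq\widehat V_\ell$, the min--max principle (as in Proposition~\ref{p:extSpecRobustPois}) yields $\hat\lambda_{\ell,j}\leq\min\{\lambda_j,\lambda_{\ell,j}\}$ for every $j$, whence
\begin{equation*}
 \frac{\abs{\lambda_j-\lambda_{\ell,j}}}{\max\{\lambda_j,\lambda_{\ell,j}\}}
 \leq
 \frac{\lambda_j-\hat\lambda_{\ell,j}}{\lambda_j}
 +\frac{\lambda_{\ell,j}-\hat\lambda_{\ell,j}}{\lambda_{\ell,j}} ,
\end{equation*}
and it is enough to bound the two relative eigenvalue defects on the right, for each $j$ in the cluster.

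The first defect I would estimate by applying Theorem~\ref{t:Knyazev} with the Hilbert space $H:=(\widehat V_\ell,a_\nc)$, whose spectrum is $(\hat\lambda_{\ell,j})_j$, and the conforming subspace $H_\ell:=V$, whose eigenvalues are exactly the continuous $\lambda_j$; the second defect with the same $H$ and $H_\ell:=V_\ell$, whose eigenvalues are the Morley values $\lambda_{\ell,j}$. Provided $\hnull\ll1$, Proposition~\ref{p:extSpecRobustPois} makes the discrete and the auxiliary spectra track the continuous one, so the non-degeneracy hypothesis of Theorem~\ref{t:Knyazev} holds by the separation \eqref{e:separationHat}, the multiplicity block containing $\lambda_j$ is not split, and the pertinent exact eigenspace $E(\hat\lambda_{\ell,p})$ is contained in $\widehat W_\ell$. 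The prefactor of Theorem~\ref{t:Knyazev} is $\lesssim1+\widehat M_J^2B^2$: the fraction occurring there is $\leq B^2\widehat M_J^2$ by \eqref{e:separationHat} and $\{\lambda_j,\lambda_{\ell,j}\}\subseteq[A,B]$, while the remaining supremum is a Friedrichs-type constant, controlled by a Poincaré--Friedrichs bound as in Corollary~\ref{c:dFMorley} (cf.\ the remark after Theorem~\ref{t:Knyazev}). Because $W\subseteq V$ and $W_\ell\subseteq V_\ell$, the best-approximation terms of Theorem~\ref{t:Knyazev} are dominated by principal-angle gaps, giving
\begin{align*}
 \frac{\lambda_j-\hat\lambda_{\ell,j}}{\lambda_j}
 &\lesssim(1+\widehat M_J^2B^2)\,\sin_{a,\nc}^2\angle(\widehat W_\ell,W),
 \\
 \frac{\lambda_{\ell,j}-\hat\lambda_{\ell,j}}{\lambda_{\ell,j}}
 &\lesssim(1+\widehat M_J^2B^2)\,\sin_{a,\nc}^2\angle(\widehat W_\ell,W_\ell).
\end{align*}

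It remains to relate the two auxiliary gaps (using the symmetry \eqref{e:SinVertausch}) to $\sin_{a,\nc}\angle(W,W_\ell)$ and, for the second assertion, to $\sup_{w\in W,\ennormnc w=1}\norm{(1-\Pi_\ell^0)D^2w}_{L^2(\Omega)}$. For the gap between $W$ and $\widehat W_\ell$ I would use that $\widehat\Lambda_\ell w\in\widehat W_\ell$ for $w\in W$, so Proposition~\ref{p:PseudoEVP_BApoiss} (after the harmless rescaling between the $b$- and the $a_\nc$-normalisation on $W$, which costs only a factor depending on $[A,B]$) gives $\sin_{a,\nc}\angle(W,\widehat W_\ell)\leq\sup_{w\in W,\ennormnc w=1}\ennormnc{(1-\widehat\Lambda_\ell)w}\lesssim\sup_{w\in W,\ennormnc w=1}\norm{(1-\Pi_\ell^0)D^2w}$. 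Since every $v_\ell\in V_\ell$ is a piecewise quadratic, one has $D_\nc^2v_\ell\in\mathcal P_0(\tri_\ell)$, and together with the projection property \eqref{e:MorleyInterpolProjProp} of $\Imorl_\ell$ this yields the identity $\norm{(1-\Pi_\ell^0)D^2w}=\inf_{v_\ell\in V_\ell}\ennormnc{w-v_\ell}\leq\inf_{w_\ell\in W_\ell}\ennormnc{w-w_\ell}$, hence $\sup_{w\in W,\ennormnc w=1}\norm{(1-\Pi_\ell^0)D^2w}\leq\sin_{a,\nc}\angle(W,W_\ell)$; the reverse comparison $\sin_{a,\nc}\angle(W,W_\ell)\lesssim\sup_{w\in W,\ennormnc w=1}\norm{(1-\Pi_\ell^0)D^2w}$ follows from the best-approximation bound $\ennormnc{(1-\Lambda_\ell)u}\lesssim\norm{(1-\Pi_\ell^0)D^2u}$ of Proposition~\ref{p:EVPcomparison}, applied to a $b$-orthonormal eigenbasis of $W$ and summed by linearity (with a constant that depends on the cluster length). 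For the gap between $W_\ell$ and $\widehat W_\ell$, the triangle inequality \eqref{e:SinDreiecksungl} for the principal angles --- all of $W$, $W_\ell$ and $\widehat W_\ell$ have dimension $N$ --- together with the previous step gives $\sin_{a,\nc}\angle(W_\ell,\widehat W_\ell)\leq\sin_{a,\nc}\angle(W_\ell,W)+\sin_{a,\nc}\angle(W,\widehat W_\ell)\lesssim\sin_{a,\nc}\angle(W,W_\ell)$. Substituting these bounds into the two displayed estimates and taking $\max_{j\in J}$ proves both inequalities of the theorem.

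The crux is that Theorem~\ref{t:Knyazev} is a conforming result with no direct nonconforming counterpart, so the whole argument hinges on the observation that $V$, $V_\ell$ and the invariant subspaces $W$, $W_\ell$ all embed into $\widehat V_\ell$ and that the auxiliary spectrum is robust under refinement, so that the \emph{uniform} separation constant $\widehat M_J$ of Corollary~\ref{c:separationMhat} is available. Apart from this conceptual move, the two steps needing real care are (i) checking that the Knyazev--Osborn prefactor is bounded by $1+\widehat M_J^2B^2$ uniformly over $\tri_\ell\in\mathbb T$, and (ii) transferring the auxiliary gaps $\sin_{a,\nc}\angle(\,\cdot\,,\widehat W_\ell)$ back to $\sin_{a,\nc}\angle(W,W_\ell)$, which relies on the comparison result Proposition~\ref{p:PseudoEVP_BApoiss} for $\widehat\Lambda_\ell$ and on the Morley-specific fact that discrete Hessians are piecewise constant.
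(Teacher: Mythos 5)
Your proposal follows essentially the same route as the paper's proof: it interposes the auxiliary eigenvalue problem on $\widehat V_\ell = V+V_\ell$, applies Theorem~\ref{t:Knyazev} twice with the conforming subspaces $V$ and $V_\ell$ of $\widehat V_\ell$, bounds the prefactor by $1+\widehat M_J^2B^2$ times a discrete Friedrichs constant, and reduces the auxiliary angles to $\sin_{a,\nc}\angle(W,W_\ell)$ via \eqref{e:SinVertausch}--\eqref{e:SinDreiecksungl} and Propositions~\ref{p:PseudoEVP_BApoiss} and~\ref{p:EVPcomparison} --- exactly the content of Lemma~\ref{l:EVerrorPois} and the concluding argument. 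The only cosmetic differences are that you merge the two sign cases into a single sum of relative defects and that you spell out the identity $\norm{(1-\Pi_\ell^0)D^2w}=\inf_{v_\ell\in V_\ell}\ennormnc{w-v_\ell}$, which the paper leaves implicit.
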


The proof of Theorem~\ref{t:RobustEstimates} 
requires the following Lemma
with the constant $C_{dF}$ from the discrete Friedrichs inequality
of Corollary~\ref{c:dFMorley}.

\begin{lemma} \label{l:EVerrorPois}
The separation condition \eqref{e:separationHat}
from Corollary~\ref{c:separationMhat} implies
\begin{equation*}
 \max_{j\in J}
 \frac{\abs{\lambda_j - \lambda_{\ell,j}}}{\max\{\lambda_j,\lambda_{\ell,j}\}}
\leq 2
(1+\widehat M_J^2 B^2 C_{dF}^4) 
  \left(
  \sin^2_{a,\nc}\angle(W,\widehat W_\ell)
  +
  \sin^2_{a,\nc}\angle(W,W_\ell)
 \right) .
\end{equation*}
\end{lemma}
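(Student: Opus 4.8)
The plan is to compare the exact cluster $\{\lambda_j : j\in J\}$ and the discrete cluster $\{\lambda_{\ell,j} : j\in J\}$ through the auxiliary eigenvalue problem \eqref{e:extEVPPoiss} in $\widehat V_\ell = V+V_\ell$. The key point is that $V\subseteq\widehat V_\ell$ and $V_\ell\subseteq\widehat V_\ell$, so that both the exact eigenproblem \eqref{e:ExactBihEVP} and the discrete eigenproblem \eqref{e:DiscrBihEVP} become \emph{conforming} approximations of \eqref{e:extEVPPoiss}. Thus I would apply Theorem~\ref{t:Knyazev} twice: once with the ``coarse'' space $H_\ell := V$ inside $H := \widehat V_\ell$, yielding a bound for $\abs{\lambda_j - \hat\lambda_{\ell,j}}$ in terms of $\sin^2_{a,\nc}\angle(W,\widehat W_\ell)$; and once with $H_\ell := V_\ell$ inside $H := \widehat V_\ell$, yielding a bound for $\abs{\lambda_{\ell,j} - \hat\lambda_{\ell,j}}$ in terms of $\sin^2_{a,\nc}\angle(W_\ell,\widehat W_\ell)$. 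The triangle inequality for the eigenvalue differences then combines these, and a further triangle inequality \eqref{e:SinDreiecksungl} for the principal angles controls $\sin^2_{a,\nc}\angle(W_\ell,\widehat W_\ell)$ by $\sin^2_{a,\nc}\angle(W,\widehat W_\ell) + \sin^2_{a,\nc}\angle(W,W_\ell)$, using the symmetry \eqref{e:SinVertausch}. Here one needs to know that $\dim \widehat W_\ell = \dim W = \dim W_\ell = N$, which follows from the spectral robustness (Proposition~\ref{p:extSpecRobustPois}) for $\hnull\ll 1$ and the fact that $J$ does not split a multiple eigenvalue.

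Next I would track the constants. In each application of Theorem~\ref{t:Knyazev}, the factor in parentheses is of the form $1 + (\text{separation factor})^2 \cdot \sup_f \norm{(1-R_\ell)Tf}_a^2$, where the supremum ranges over normalised $f$ in the span of the lower discrete eigenfunctions. The separation factors $\lambda_{\ell,j}^2\lambda_k^2/\abs{\lambda_{\ell,j}-\lambda_k}^2$ (and their analogues with $\hat\lambda$) are each bounded by $\widehat M_J^2 B^2$ using \eqref{e:separationHat} and the cluster bound $\lambda_k, \hat\lambda_{\ell,j}\le B$. The remaining supremum term I would bound crudely by a power of the discrete Friedrichs constant: since $T$ is the solution operator, $\norm{(1-R_\ell)Tf}_a \le \norm{Tf}_a \lesssim \norm{f}_b \le C_{dF}\operatorname{diam}(\Omega)^2\norm{f}_a$ (via Corollary~\ref{c:dFMorley}), which is $\lesssim C_{dF}^2$ after absorbing the domain diameter into the generic constant, so that term contributes the factor $C_{dF}^4$. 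This produces exactly the stated constant $2(1+\widehat M_J^2 B^2 C_{dF}^4)$ once the normalisation (Remark~\ref{r:normalis}) is used to convert the $\norm{\cdot}_a$-normalised best-approximation quantities $\sup_{\norm{u}_a=1}\inf_{v_\ell}\norm{u-v_\ell}_a^2$ appearing in Theorem~\ref{t:Knyazev} into the $\sin^2_{a,\nc}$ quantities in the claim — a step where the ratio $B/A$ could a priori enter, but since we are dividing by $\max\{\lambda_j,\lambda_{\ell,j}\}$ the $\lambda_j^{-1}$ scaling in Remark~\ref{r:normalis} is precisely accounted for.

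The main obstacle I anticipate is the verification that Theorem~\ref{t:Knyazev} genuinely applies in the nonconforming-but-embedded setting $V\subseteq\widehat V_\ell$ and $V_\ell\subseteq\widehat V_\ell$: one must check the hypothesis $\min_{j\le p-1}\abs{\lambda_{\ell,j}-\lambda_p}\neq 0$ (here with $\lambda_{\ell,j}$ being the auxiliary eigenvalues $\hat\lambda_{\ell,j}$ in one application and the true discrete eigenvalues $\lambda_{\ell,j}$ in the other), which is where the separation constant $\widehat M_J$ and spectral robustness are essential, and one must also invoke Remark~\ref{r:Cluster:KOdiminfOK} to handle the possibly infinite dimension of $\widehat V_\ell$. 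A secondary subtlety is that Theorem~\ref{t:Knyazev} as stated gives a one-sided bound $\lambda_{\ell,k}-\lambda_p \ge 0$ for a single multiplicity-$q$ eigenvalue, whereas the lemma wants a bound for a whole cluster with possibly distinct $\lambda_j$; I would handle this by applying the theorem separately to each distinct eigenvalue inside the cluster (the hypothesis that $J$ does not split a multiple eigenvalue guarantees that each such sub-application sees all its copies), and then take the maximum over $j\in J$, at the cost of the harmless factor $2$ already present in the statement. Once these structural points are in place, the rest is the bookkeeping of constants sketched above.
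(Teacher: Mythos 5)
Your proposal is correct and follows essentially the same route as the paper: both applications of Theorem~\ref{t:Knyazev} inside $\widehat V_\ell$ (with $V$ and with $V_\ell$ as the conforming subspace), the min--max principle, Remark~\ref{r:Cluster:KOdiminfOK} for $\dim\widehat V_\ell=\infty$, the bounds $\widehat M_J^2B^2$ and $C_{dF}^4$ for the two factors, and the sine triangle inequality \eqref{e:SinDreiecksungl}. The only refinement in the paper is how the constant $2$ arises: instead of summing both Knyazev bounds via the triangle inequality for $\abs{\lambda_j-\lambda_{\ell,j}}$ (which yields $3$ after squaring \eqref{e:SinDreiecksungl}), it uses the min--max monotonicity $\hat\lambda_{\ell,j}\le\min\{\lambda_j,\lambda_{\ell,j}\}$ to treat the two signs of $\lambda_j-\lambda_{\ell,j}$ as separate cases, so that for each $j$ only one of the two bounds is needed.
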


\begin{proof}
Notice that,
in contrast to the case of conforming finite element methods,
the sign of
$\lambda_j -\lambda_{\ell,j}$ is not known
in the present case of nonconforming methods.

The min-max principle and
Theorem~\ref{t:Knyazev}
(where $H$ is replaced by $\widehat V_\ell$
and $H_\ell$ is replaced by $V$)  prove
\begin{equation} \label{e:PoisNC:KnyOsbEstCaseGE}
 \lambda_j -\lambda_{\ell,j}
\leq
\lambda_j - \hat\lambda_{\ell,j}
\leq
  \lambda_j
  (1+  \widehat M_J^2 B^2  C_{dF}^4)
    \sin^2_{a,\nc}\angle(\widehat W_\ell,W) .
\end{equation}
Here, Theorem~\ref{t:Knyazev} has been applied to the
case that the eigenvalues in $V$ are Ritz values of the eigenvalues
in $\widehat V_\ell$.
Notice carefully that Theorem~\ref{t:Knyazev}
does not require a finite dimension of the ``approximating'' subspace 
(in this case $V$) as pointed out in Remark~\ref{r:Cluster:KOdiminfOK}.

Since the eigenvalue cluster $J$ is finite and, therefore,
the spaces $\widehat W_\ell$ and $W$ have equal finite dimension,
the identity \eqref{e:SinVertausch} implies that
\begin{equation*}%
\sin^2_{a,\nc}\angle(\widehat W_\ell,W)
=
\sin^2_{a,\nc}\angle(W,\widehat W_\ell).
\end{equation*}

In order to bound the modulus $\abs{\lambda_j -\lambda_{\ell,j}}$,
consider also the reverse sign.
Notice that the nonconforming finite element space $V_\ell$ acts as 
a conforming subspace of $\widehat V_\ell$.
The min-max principle and
Theorem~\ref{t:Knyazev} 
(where $H$ is replaced by $\widehat V_\ell$)
then prove
\begin{equation*}
 \lambda_{\ell,j} -\lambda_j
\leq
\lambda_{\ell,j} - \hat\lambda_{\ell,j}
\leq
  \lambda_{\ell,j}
  (1+  \widehat M_J^2 B^2  C_{dF}^4)
    \sin^2_{a,\nc}\angle(\widehat W_\ell,W_\ell) .
\end{equation*}

The formulas \eqref{e:SinVertausch}--\eqref{e:SinDreiecksungl} imply
\begin{equation*}
\begin{split}
\sin^2_{a,\nc}\angle(\widehat W_\ell,W_\ell) \big/ 2
&
\leq
\sin^2_{a,\nc}\angle(\widehat W_\ell,W)
   +\sin^2_{a,\nc}\angle(W,W_\ell)
\\
&
=
\sin^2_{a,\nc}\angle(W,\widehat W_\ell)
   +\sin^2_{a,\nc}\angle(W,W_\ell).
\end{split}
\end{equation*}
\end{proof}

\begin{proof}[Proof of Theorem~\ref{t:RobustEstimates}]
For any $j\in J$, 
Lemma~\ref{l:EVerrorPois} implies
\begin{equation*}
 \frac{\abs{\lambda_j - \lambda_{\ell,j}}}{\max\{\lambda_j,\lambda_{\ell,j}\}}
\leq 2
 (1+ \widehat M_J^2 B^2 C_{dF}^4)
 \left(
 \sin^2_{a,\nc}\angle(W,\widehat W_\ell) 
   +
  \sin^2_{a,\nc}\angle(W,W_\ell) 
 \right).
 \end{equation*}
Proposition~\ref{p:PseudoEVP_BApoiss} shows
\begin{equation*}
\sin^2_{a,\nc}\angle(W,\widehat W_\ell) 
  \lesssim \sin^2_{a,\nc}\angle(W,W_\ell).
\end{equation*}
This proves the first stated inequality. The second inequality
follows from Proposition~\ref{p:EVPcomparison}.
\end{proof}

\begin{remark}
Similar eigenvalue error estimates can be proven for the
nonconforming $\mathcal P_1$ finite element method for the
eigenvalues of the Laplacian or the Stokes operator with the
operators described in \citep{Gallistl2014nc}.
The error estimates of \citep{BoffiDuranGardiniGastaldi2014} for
the eigenvalues of the Laplacian are based on a different methodology.
The authors make use of a conforming $\mathcal P_1$ subspace which makes
a generalisation to the Stokes or the biharmonic eigenvalue problem
appear difficult. On the other hand, they require
less restrictions on the initial mesh-size.
\end{remark}

\section{Adaptive Finite Element Method}\label{s:AFEM}

As an application of the $L^2$ and eigenvalue error estimates developed
in the foregoing sections, this section presents optimal convergence
rates for the adaptive Morley FEM for eigenvalue clusters.

\subsection{Adaptive Algorithm and Optimal Convergence Rates}
\label{ss:AdaptAlg}

This subsection introduces the adaptive algorithm and states 
the optimality result.

For any triangle $T\in\tri_\ell$, the
explicit residual-based error estimator
consists of the
sum of the residuals of the computed discrete eigenfunctions
$(u_{\ell,j})_{j\in J}$,
\begin{equation*}
\begin{aligned}
 \eta_\ell^2(T) :=
  \sum_{j\in J}
     \bigg( h_T^4\norm{\lambda_{\ell,j} u_{\ell,j}}_{L^2(T)}^2 
  &
  +  
       \sum_{F\in\faces(T)\cap\faces_\ell(\Omega\cup\Gamma_C)}
          h_T \norm{[D^2_\nc u_{\ell,j}]_F\tau_F}_{L^2(F)}^2
  \\
  &
  +     \sum_{F\in\faces(T)\cap\faces_\ell(\Gamma_S)}
          h_T \norm{([D^2_\nc u_{\ell,j}]_F\tau_F)\cdot\tau_F}_{L^2(F)}^2\bigg) 
.
\end{aligned}
\end{equation*}
Let, for any subset $\mathcal{K}\subseteq\tri$,
\begin{equation*}
 \eta_\ell^2(\mathcal K) := \sum_{T\in\mathcal K} \eta_\ell^2(T) .
\end{equation*}
This type of error estimator was introduced by
\citet{BeiNiirSten2007,BeiNiirSten2010} and
\citet{HuShi2009} for linear problems.
The methodology to consider the sum of the residuals of the
computed eigenfunctions was first employed in \citep{DaiHeZhou2012v2}
for the case of a multiple eigenvalue.

The adaptive algorithm is driven by this computable error
estimator and runs the following loop.

\begin{algorithm}[AFEM for the biharmonic eigenvalue problem]
  \label{a:AFEM}
\textcolor{white}{.}
\newline
\textbf{Input:} Initial triangulation $\tri_0$, bulk parameter $0<\theta\le 1$.

\noindent
\textbf{for} {$\ell=0,1,2,\dots$}

{\it Solve.}
  Compute discrete eigenpairs 
  $(\lambda_{\ell,j},u_{\ell,j})_{j\in J}$
  of \eqref{e:DiscrBihEVP} with respect to $\tri_\ell$.

{\it Estimate.}
  Compute local contributions of the error estimator
  $\big(\eta_\ell^2(T)\big)_{T\in\tri_\ell}$.

{\it Mark.}
  Choose a minimal subset 
  $\mathcal{M}_\ell\subseteq\tri_\ell$
  such that
  $
    \theta \eta_\ell^2 (\tri_\ell)
  \le  \eta_\ell^2 (\mathcal{M}_\ell) .
 $

{\it Refine.}
  Generate $\tri_{\ell+1}$ from $\tri_\ell$ and $\mathcal M_\ell$
  with newest-vertex bisection \citep{BinevDahmenDeVore2004,Stevenson2008}.

\noindent
\textbf{end for}

\noindent
\textbf{Output:}
 Triangulations $\left(\tri_\ell\right)_\ell$
 and discrete solutions
$\big((\lambda_{\ell,j},u_{\ell,j})_{j\in J}\big)_\ell$.
\end{algorithm}

Let, for any $m\in\mathbb{N}$, the set of triangulations
in $\mathbb T$ whose cardinality differs from that of
$\tri_0$ by $m$ or less  be denoted by
$$\mathbb T(m):=
  \{\tri\in\mathbb T 
        \mid 
    \operatorname{card}(\tri) - \operatorname{card}(\tri_0)
               \leq m \}
               .
$$
Define the seminorm
\begin{equation*}
 \abs{u}_{\mathfrak A_\sigma }
 := \sup_{m\in\mathbb{N}} m^\sigma 
       \inf_{\tri\in\mathbb{T}(m)}
      \norm{(1-\Pi^0_\tri) D^2 u}
\end{equation*}
and the approximation class
\begin{equation*}
 \mathfrak A_\sigma :=
 \left\{ v \in V \bigm|
         \abs{v}_{\mathfrak A_\sigma} < \infty \right\}.
\end{equation*}
The set $\mathfrak A_\sigma$
does not depend on the finite element method
and instead concerns the approximability of the Hessian
by piecewise constant functions.
The following alternative set, also referred to as 
approximation class, is employed in the analysis of
the optimal convergence rates
\begin{equation*}
\mathfrak A_\sigma^{\mathrm{Morley}} :=
 \left\{ u \in V \bigm|
         \abs{u}_{\mathfrak A_\sigma^{\mathrm{Morley}}} < \infty \right\}
\end{equation*}
for
\begin{equation*}
  \abs{u}_{\mathfrak A_\sigma^{\mathrm{Morley}}}
 := \sup_{m\in\mathbb{N}} m^\sigma 
       \inf_{\tri\in\mathbb{T}(m)}
       \ennorm{ u - \Lambda_\tri u} .
\end{equation*}
Proposition~\ref{p:EVPcomparison} establishes
the equivalence of those two approximation classes in
the sense that any eigenfunction $u\in W$ satisfies
$u\in\mathfrak A_\sigma$ if and only if
$u \in\mathfrak A_\sigma^{\mathrm{Morley}}$.
The following theorem states optimality of Algorithm~\ref{a:AFEM}.
The proof will be outlined throughout the remaining parts of this 
section.

\begin{theorem}[optimal convergence rates]\label{t:optimality}
 Let $\Omega$ be simply-connected.
 Provided the bulk parameter $\theta\ll 1$ and the initial mesh-size
 $\hnull\ll1 $ are sufficiently small,
 Algorithm~\ref{a:AFEM} computes triangulations $(\tri_\ell)_\ell$
 and discrete eigenpairs
 $\left( (\lambda_{\ell,j}, u_{\ell,j})_{j\in J} \right)_{\ell}$
 with optimal rate of convergence in the sense that, for
 some constant $C_{\mathrm{opt}}$,
 \begin{equation*}
 \sup_{\ell\in\mathbb N}
    \big(\card (\tri_\ell) 
                - \card (\tri_0)\big)^{\sigma} 
  \left(  \sum_{j\in J} 
        \ennormnc{u_j - \Lambda_\ell u_j}^2
   \right)^{1/2}
   \leq 
    C_{\mathrm{opt}}
   \left(\sum_{j\in J} \abs{u_j}_{\mathfrak A_\sigma^{\mathrm{Morley}}}^2\right)^{1/2}
   .
 \end{equation*}

\end{theorem}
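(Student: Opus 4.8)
The plan is to follow the now-standard optimality framework of \citet{CKNS08,Stevenson2007}, in the form adapted to nonconforming discretisations and clustered eigenvalues in \citep{CarstensenGallistlSchedensack2014,Gallistl2014nc,Gallistl2014}. The first task is to verify the four structural properties of the estimator $\eta_\ell$ (which sums the residuals of the discrete eigenpairs $(\lambda_{\ell,j},u_{\ell,j})$ over $j\in J$): (A1) stability of $\eta_\ell$ on the non-refined elements under refinement, (A2) estimator reduction on the refined elements, (A3) discrete reliability $\sum_{j\in J}\ennormnc{\Lambda_\ell u_j - \Lambda_{\ell+m} u_j}^2 \lesssim \eta_\ell^2(\tri_\ell\setminus\tri_{\ell+m})$ up to higher-order terms, and (A4) a quasi-orthogonality. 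Properties (A1)--(A2) for the edge-jump contributions are inverse-estimate and triangle-inequality arguments exactly as for the linear Morley FEM \citep{HuShiXu2012Morley,CarstensenGallistlHu2014}; the eigenvalue-dependent volume term $h_T^4\norm{\lambda_{\ell,j}u_{\ell,j}}_{L^2(T)}^2$ is treated by replacing $\lambda_{\ell,j}u_{\ell,j}$ with $\lambda_j u_j$ at the expense of a perturbation that, thanks to the $L^2$ estimates of Proposition~\ref{p:EVPL2control} and Proposition~\ref{p:L2pesudoevpPoiss} together with the eigenvalue error bound of Theorem~\ref{t:RobustEstimates}, is of order $\hnull^s$ times the energy error and hence absorbable once $\hnull\ll1$.

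For reliability and discrete reliability the conforming companion operator $\mathcal C$ of Proposition~\ref{p:companion} and the Morley interpolation $\Imorl_\ell$ are the key tools: decomposing $u_j-\Lambda_\ell u_j$ through $\Imorl_\ell$ and $\mathcal C$ and invoking the projection properties \eqref{e:MorleyInterpolProjProp}, \eqref{e:CompanionProj} together with the stability bounds \eqref{e:MorleyInterpolApprxStab}, \eqref{e:CompanionApproxStab} reduces the nonconforming residual to precisely the jump terms in $\eta_\ell$ plus $\osc_2(\lambda_j u_j,\tri_\ell)$, as in \citep{Gallistl2014,Gallistl2014nc}; the simple-connectedness of $\Omega$ enters here through the $\Curl$-based right-inverse used in the discrete-reliability step. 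Efficiency is the bubble-function argument already carried out in the proofs of Proposition~\ref{p:IlEstimate} and Proposition~\ref{p:HCTenrichment}. The equivalence $\ennormnc{u_j-\Lambda_\ell u_j}\approx \norm{(1-\Pi_\ell^0)D^2 u_j}$ from Proposition~\ref{p:EVPcomparison} then identifies the error with the best approximation of the Hessian by piecewise constants.

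Granting (A1)--(A4), the estimator-reduction argument yields contraction of a combined quantity $\sum_{j\in J}\ennormnc{u_j-\Lambda_\ell u_j}^2 + \gamma\,\eta_\ell^2$ up to higher-order terms, hence linear convergence of the total energy error. Optimality then follows from the usual two ingredients. First, the \emph{optimality of Dörfler marking}: once the total error falls below a threshold (guaranteed by the linear convergence together with $\hnull\ll1$), any set realising a bulk fraction $\theta$ below a threshold depending only on the reliability and efficiency constants can be chosen with cardinality $\lesssim \big(\sum_{j\in J}\abs{u_j}_{\mathfrak A_\sigma^{\mathrm{Morley}}}^2\big)^{1/(2\sigma)}\big(\sum_{j\in J}\ennormnc{u_j-\Lambda_\ell u_j}^2\big)^{-1/(2\sigma)}$, where Proposition~\ref{p:EVPcomparison} supplies the identification $u_j\in\mathfrak A_\sigma^{\mathrm{Morley}} \Leftrightarrow u_j\in\mathfrak A_\sigma$. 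Second, the newest-vertex-bisection closure estimate \citep{BinevDahmenDeVore2004,Stevenson2008} turns this per-step bound into the asymptotic bound $\card(\tri_\ell)-\card(\tri_0)\lesssim \big(\sum_{j\in J}\abs{u_j}_{\mathfrak A_\sigma^{\mathrm{Morley}}}^2\big)^{1/(2\sigma)}\big(\sum_{j\in J}\ennormnc{u_j-\Lambda_\ell u_j}^2\big)^{-1/(2\sigma)}$, which rearranges to the claimed estimate with a constant $C_{\mathrm{opt}}$ depending on $\theta$, $\widehat M_J$ and $B/A$.

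The main obstacle I anticipate is the quasi-orthogonality (A4) for the cluster in the nonconforming setting: since $\Lambda_\ell u_j$ is not a discrete eigenfunction and $V_\ell\not\subseteq V$, Galerkin orthogonality fails on two counts, and one has to control both the consistency error and the deviation of $\Lambda_\ell u_j$ from the exact invariant subspace. I would handle this via the commuting identity $P_\ell\circ R_\ell=R_\ell\circ P_\ell$ of Lemma~\ref{l:LambdaPLemma} together with the sharp $\hnull^s$ $L^2$ bounds of Subsection~\ref{ss:nonstandardritz} (Proposition~\ref{p:L2pesudoevpPoiss}) and the comparison result of Proposition~\ref{p:PseudoEVP_BApoiss}, keeping the dependence on $\widehat M_J$ and $B/A$ explicit so that the smallness requirements on $\hnull$ and $\theta$ are uniform over the cluster. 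The secondary delicate point is ensuring that every replacement of $(\lambda_{\ell,j},u_{\ell,j})$ by $(\lambda_j,u_j)$ and of $\widehat\Lambda_\ell$ by $\Lambda_\ell$ in the estimator is genuinely of order $\hnull^s$ relative to the energy error; this again rests on combining Theorem~\ref{t:RobustEstimates}, Proposition~\ref{p:L2pesudoevpPoiss} and Proposition~\ref{p:PseudoEVP_BApoiss}.
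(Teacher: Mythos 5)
Your overall strategy---the axiomatic framework of \citet{CKNS08,Stevenson2007} with stability, reduction, discrete reliability and quasi-orthogonality, followed by the D\"orfler-marking optimality and the newest-vertex-bisection closure estimate---is exactly the route the paper takes, and your list of supporting results (the companion operator, the Morley interpolation, Propositions~\ref{p:EVPcomparison}, \ref{p:L2pesudoevpPoiss} and \ref{p:PseudoEVP_BApoiss}, Lemma~\ref{l:LambdaPLemma}) is the right one. Two of the mechanisms you propose, however, would not work as stated. First, you plan to verify stability and estimator reduction directly for the computable estimator $\eta_\ell$ and to treat the replacement of $(\lambda_{\ell,j},u_{\ell,j})$ by $(\lambda_j,u_j)$ as a perturbation of order $\hnull^s$ relative to the energy error. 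For the eigen\emph{values} this is fine, but for the eigen\emph{functions} it is false: within a cluster the discrete eigenfunctions are determined only up to an orthogonal transformation of $W_\ell$, so $\ennormnc{u_{\ell,j}-\Lambda_\ell u_j}$ is in general of order one and the termwise comparison of $\eta_{\ell+1}$ with $\eta_\ell$ needed for estimator reduction is not controlled. The paper circumvents this by running the entire contraction and discrete-reliability machinery for the non-computable, basis-independent estimator $\mu_\ell$ built from $\Lambda_\ell u_j$ and $P_\ell u_j$, and only transfers the D\"orfler marking from $\eta_\ell$ to $\mu_\ell$ through the cluster-summed equivalence of Proposition~\ref{p::bulk}, at the price of the modified bulk parameter $\tilde\theta$ in \eqref{e:modbulk}; the comparison constants there are $O((B/A)^4 N^3)$, not $1+O(\hnull^s)$. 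Without this device (or an equivalent basis-invariant formulation) your reduction step has a hole.

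Second, for discrete reliability you name $\mathcal C$ and $\Imorl_\ell$ as the key tools. They do handle the volume term and the higher-order $L^2$ contributions (through $\bm r_{\ell,m}$), but they cannot localise the consistency error to the refined elements $\tri_\ell\setminus\tri_{\ell+m}$, which is what discrete reliability requires. The paper's actual mechanism is the discrete Helmholtz decomposition of $D^2_\nc((\Lambda_{\ell+m}-\Lambda_\ell)u)$ into $D^2_\nc\phi_{\ell+m}+\operatorname{sym}\Curl\psi_{\ell+m}$ (Theorem~\ref{t:discretehelmholtzsym}, which is where simple-connectedness enters), combined with the Scott--Zhang quasi-interpolant $\psi_\ell$ of Lemma~\ref{l:ScottZhang} that coincides with $\psi_{\ell+m}$ on unrefined edges and preserves the boundary conditions of $\mathfrak X(\tri_\ell)$; integrating by parts against $\psi_{\ell+m}-\psi_\ell$ is what produces jump terms supported only on $\faces_\ell\setminus\faces_{\ell+m}$. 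Your phrase ``$\Curl$-based right-inverse'' gestures at this, but the argument must be carried out with the decomposition and the boundary-condition-preserving interpolant, not with the companion operator. With these two repairs your plan matches the paper's proof, which then concludes exactly as you describe via Propositions~\ref{p:quasiorth}--\ref{p:contraction} and the techniques of \citet{Stevenson2007,CKNS08}.
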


Proposition~\ref{p:EVPcomparison}, Theorem~\ref{t:RobustEstimates}
and
Remark~\ref{r:normalis} immediately imply the following
consequence.

\begin{corollary}\label{c:Optim}
Let $\Omega$ be simply-connected.
 Provided the bulk parameter $\theta\ll 1$ and the initial mesh-size
 $\hnull\ll1$ are sufficiently small,
 Algorithm~\ref{a:AFEM} computes triangulations $(\tri_\ell)_\ell$
 and discrete eigenpairs
 $\left( (\lambda_{\ell,j}, u_{\ell,j})_{j\in J} \right)_{\ell}$
 with optimal rate of convergence in the sense that
  \begin{align*}
  &
   (1+\widehat M_J^2 B^2)^{-1/2}
  \max_{k\in J}
   \left(\frac{\abs{\lambda_k - \lambda_{\ell,k}}}
              {\max\{\lambda_k,\lambda_{\ell,k}\}}\right)^{1/2}
   +
   \sin_{a,\nc}\angle(W,W_\ell)
   \\
   &
    \qquad\qquad\qquad
   \lesssim
   A^{-1/2}
   (\operatorname{card}(\tri_\ell) - \operatorname{card}(\tri_0))^{-\sigma} 
   \left(\sum_{j\in J} \abs{u_j}_{\mathfrak A_\sigma}^2\right)^{1/2}
   .
   \qed
  \end{align*}
\end{corollary}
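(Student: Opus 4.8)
The plan is to deduce Corollary~\ref{c:Optim} from Theorem~\ref{t:optimality} by combining it with three further inputs: the eigenvalue error estimate of Theorem~\ref{t:RobustEstimates}, an elementary bound relating the principal angle $\sin_{a,\nc}\angle(W,W_\ell)$ to the cluster error $\sum_{j\in J}\ennormnc{u_j-\Lambda_\ell u_j}^2$ (the quantitative form of Remark~\ref{r:normalis}), and the comparison $\abs{u_j}_{\mathfrak A_\sigma^{\mathrm{Morley}}}\lesssim\abs{u_j}_{\mathfrak A_\sigma}$ obtained from Proposition~\ref{p:EVPcomparison}. Since Theorem~\ref{t:optimality} is already available, these steps are purely algebraic; I also indicate below how I would prove Theorem~\ref{t:optimality} itself, which is where the real work lies.

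First I would bound $\sin_{a,\nc}\angle(W,W_\ell)$ in terms of the cluster error. For $w=\sum_{j\in J}c_j u_j\in W$ with $\ennormnc{w}=1$, the $b$-orthonormality of the $u_j$ and the eigenrelation $a(u_j,u_k)=\lambda_j\delta_{jk}$ give $1=\ennormnc{w}^2=\sum_{j\in J}c_j^2\lambda_j\geq A\sum_{j\in J}c_j^2$, while $\sum_{j\in J}c_j\Lambda_\ell u_j\in W_\ell$ is an admissible competitor in the infimum defining the angle. The triangle and Cauchy--Schwarz inequalities then yield
\begin{align*}
\inf_{v_\ell\in W_\ell}\ennormnc{w-v_\ell}
&\leq \Big(\sum_{j\in J}c_j^2\Big)^{1/2}\Big(\sum_{j\in J}\ennormnc{u_j-\Lambda_\ell u_j}^2\Big)^{1/2}
\\
&\leq A^{-1/2}\Big(\sum_{j\in J}\ennormnc{u_j-\Lambda_\ell u_j}^2\Big)^{1/2},
\end{align*}
and taking the supremum over all such $w$ gives $\sin_{a,\nc}\angle(W,W_\ell)\leq A^{-1/2}\big(\sum_{j\in J}\ennormnc{u_j-\Lambda_\ell u_j}^2\big)^{1/2}$.

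Second I would chain the remaining estimates. Taking square roots in Theorem~\ref{t:RobustEstimates} shows that $(1+\widehat M_J^2B^2)^{-1/2}\max_{k\in J}\big(\abs{\lambda_k-\lambda_{\ell,k}}/\max\{\lambda_k,\lambda_{\ell,k}\}\big)^{1/2}\lesssim\sin_{a,\nc}\angle(W,W_\ell)$, so the whole left-hand side of the claim is $\lesssim\sin_{a,\nc}\angle(W,W_\ell)\lesssim A^{-1/2}\big(\sum_{j\in J}\ennormnc{u_j-\Lambda_\ell u_j}^2\big)^{1/2}$. Theorem~\ref{t:optimality} bounds this further by $A^{-1/2}(\card(\tri_\ell)-\card(\tri_0))^{-\sigma}\big(\sum_{j\in J}\abs{u_j}_{\mathfrak A_\sigma^{\mathrm{Morley}}}^2\big)^{1/2}$, and Proposition~\ref{p:EVPcomparison} — which for each fixed $\tri\in\mathbb T(m)$ gives $\ennormnc{u_j-\Lambda_\tri u_j}\lesssim\norm{(1-\Pi_{\tri}^0)D^2u_j}$, hence, after taking the infimum over $\tri\in\mathbb T(m)$ and then the supremum over $m$, $\abs{u_j}_{\mathfrak A_\sigma^{\mathrm{Morley}}}\lesssim\abs{u_j}_{\mathfrak A_\sigma}$ — converts the right-hand side into the stated form and completes the proof of the corollary.

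For Theorem~\ref{t:optimality} itself I would follow \citep{CKNS08,Stevenson2007} as adapted to eigenvalue clusters in \citep{Gallistl2014nc,Gallistl2014}: verify the standard axioms of adaptivity for the cluster estimator $\eta_\ell$ and invoke the abstract optimality result. Stability and reduction of $\eta_\ell$ are local and follow from inverse estimates, mesh-size contraction under newest-vertex bisection, and the triangle inequality applied to the discrete eigenfunctions of two successive levels; reliability and efficiency of $\eta_\ell$ with respect to $\sum_{j\in J}\ennormnc{u_j-\Lambda_\ell u_j}^2$ combine Proposition~\ref{p:EVPcomparison} with a residual bound obtained via the companion operator $\mathcal C$ of Proposition~\ref{p:companion} and the Morley interpolation $\Imorl_\ell$ (as in the proof of Proposition~\ref{p:LinBestAppx}, now with $u_{\ell,j}$ supplying the residual) and with the bubble-function technique of Propositions~\ref{p:IlEstimate} and~\ref{p:HCTenrichment}, where the simple-connectedness of $\Omega$ enters; discrete reliability is obtained by testing the residual with discrete functions supported on the refined region, lifted into $V$ by $\mathcal C$ and corrected by $\Imorl_\ell$, exploiting the local supports of $\zeta_F$ and $\tilde{\bm\flat}_T$. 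The main obstacle is quasi-orthogonality: unlike the linear problem, $u_j\mapsto\Lambda_\ell u_j$ is not a nested family of linear projections, the discrete eigenvalues $\lambda_{\ell,j}$ vary with $\ell$, and the sign of $\lambda_j-\lambda_{\ell,j}$ is unknown. I would handle it by the perturbation argument of \citep{Gallistl2014}, using Lemma~\ref{l:LambdaPLemma} to restore an almost-Galerkin orthogonality and absorbing the resulting $b$-inner-product remainder through the $L^2$ superconvergence $\norm{u-\Lambda_\ell u}\lesssim\hnull^s\ennormnc{u-\Lambda_\ell u}$ of Proposition~\ref{p:L2pesudoevpPoiss} — precisely where $\hnull\ll1$ is needed, and where $\theta\ll1$ enters to make the resulting contraction factor strictly less than one.
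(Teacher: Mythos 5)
Your deduction of the corollary is correct and coincides with the paper's, which simply declares it an immediate consequence of Theorem~\ref{t:optimality}, Theorem~\ref{t:RobustEstimates}, Proposition~\ref{p:EVPcomparison} and Remark~\ref{r:normalis}; your explicit Cauchy--Schwarz bound with the competitor $\sum_{j}c_j\Lambda_\ell u_j$ is exactly the quantitative content of that remark, and the class comparison $\abs{u_j}_{\mathfrak A_\sigma^{\mathrm{Morley}}}\lesssim\abs{u_j}_{\mathfrak A_\sigma}$ is the intended use of Proposition~\ref{p:EVPcomparison}. The appended sketch of Theorem~\ref{t:optimality} is not needed for the corollary (that theorem is available as a prior result), though note the paper's discrete reliability actually rests on the discrete Helmholtz decomposition rather than on testing with $\mathcal C$-lifted functions.
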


\subsection{Discrete Reliability}
        \label{s:drel}

This section generalises the discrete Helmholtz decomposition 
from \citep{CarstensenGallistlHu2014} to more general boundary conditions.
The decomposition can be viewed as a discrete analogue of
\citep[Lemma~1 and Corollary~1]{BeiNiirSten2010}.

Define
\begin{equation*}
 \hat H^1(\Omega;\mathbb R^2)
 :=
 \left\{v \in H^1(\Omega;\mathbb R^2)
     \bigm| \textstyle\int_\Omega v \,dx = 0 \text{ and } 
       \textstyle\int_\Omega \operatorname{div} v \,dx = 0 \right\} 
\end{equation*}
and
\begin{equation*}
\mathfrak X(\tri_\ell):= 
\left\{ 
 \begin{array}{l}
 v\in \mathcal P_1(\tri_\ell;\mathbb{R}^2) \\
 \phantom{v\in}\cap \hat H^1(\Omega;\mathbb{R}^2)
 \end{array}
\left|
 \begin{array}{l}
 \text{1. for all } 
   F=\operatorname{conv}\{z_1,z_2\}\in\faces_\ell(\Gamma_S\cup\Gamma_F)\\ 
   \quad(v(z_2)-v(z_1))\cdot\nu_F =0, \\
 \text{2. for all } (F_-,F_+)\in\faces_\ell(\Gamma_F)^2 \\
   \quad \text{with } F_- = \operatorname{conv}\{z_-,z\},
                    F_+ = \operatorname{conv}\{z,z_+\}\\
 \quad h_{F_-}^{-1} (v(z)-v(z_-))\cdot\tau_{F_-}
       = 
       h_{F_+}^{-1} (v(z_+)-v(z))\cdot\tau_{F_+}
 \end{array}
\right.
\right\}   .
\end{equation*}

\begin{remark}
In other words, the functions of $\mathfrak X(\tri_\ell)$
satisfy that
$\partial(\psi\cdot\nu)/\partial\tau = 0$ on $\Gamma_S \cup\Gamma_F$
and
$(D\psi \tau)\cdot\tau$ is constant on each connectivity component
of $\Gamma_F$.
The definition of $\mathfrak X(\tri_\ell)$ above is
stated in such a way that one can see that this defines
$\card(\faces_\ell(\Gamma_S\cup\Gamma_F)) + \card(\mathcal N_\ell(\Gamma_F))$
linear independent contraints on 
$P_1(\tri_\ell;\mathbb{R}^2)\cap \hat H^1(\Omega;\mathbb{R}^2)$.
Recall that $\Gamma_C$ and $\Gamma_C\cup\Gamma_S$ are
assumed to be closed sets and, thus,
$\mathcal N_\ell(\Gamma_F)$ contains exactly those vertices
that are shared by two edges of $\Gamma_F$.
\end{remark}

\begin{theorem}[discrete Helmholtz decomposition for
                piecewise constant symmetric tensor fields]
                \label{t:discretehelmholtzsym}
 Let $\Omega$ be simply-connected.
 Given any piecewise constant symmetric tensor field
 $\sigma_\ell \in \mathcal P_0(\tri_\ell;\mathbb{S})$,
 there exist unique $\phi_\ell \in V_\ell$
 and $\psi_\ell \in \mathfrak X(\tri_\ell)$ such that
 \begin{equation}\label{e:dHelmholtzSymDecomp}
  \sigma_\ell = D^2_\nc \phi_\ell  + \operatorname{sym}\Curl\psi_\ell.
 \end{equation}
 The decomposition is $L^2$ orthogonal and 
 the functions $\phi_\ell$, $\psi_\ell$, $\sigma_\ell$ from
 \eqref{e:dHelmholtzSymDecomp} satisfy
 \begin{equation}\label{e:dHelmholtzSymStab}
  \norm{D^2_\nc \phi_\ell}_{L^2(\Omega)} 
     + \lVert \Curl \psi_\ell \rVert_{L^2(\Omega)}
   \lesssim 
      \lVert \sigma_\ell \rVert_{L^2(\Omega)}.
 \end{equation}
 \end{theorem}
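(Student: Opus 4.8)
<br>

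The plan is to establish the decomposition \eqref{e:dHelmholtzSymDecomp} by a dimension-counting argument combined with $L^2$-orthogonality, in the spirit of the discrete Helmholtz decompositions in \citep{CarstensenGallistlHu2014}. First I would verify that the two summand spaces $D^2_\nc V_\ell$ and $\operatorname{sym}\Curl\mathfrak{X}(\tri_\ell)$ are $L^2$-orthogonal subspaces of $\mathcal P_0(\tri_\ell;\mathbb S)$. Orthogonality follows from a piecewise integration by parts: for $\phi_\ell\in V_\ell$ and $\psi_\ell\in\mathfrak X(\tri_\ell)$ one has $(D^2_\nc\phi_\ell,\operatorname{sym}\Curl\psi_\ell)_{L^2(\Omega)} = (D^2_\nc\phi_\ell,\Curl\psi_\ell)_{L^2(\Omega)}$ since $D^2_\nc\phi_\ell$ is symmetric, and then integrating by parts elementwise, the volume terms vanish because $\Curl$ of a piecewise-affine field is piecewise constant and $D^3_\nc\phi_\ell=0$, so only jump contributions along edges remain. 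These edge terms involve the jumps of $D_\nc\phi_\ell$ and the traces of $\psi_\ell$; the definition of the Morley space ($D_\nc\phi_\ell$ continuous at edge midpoints, vanishing on $\Gamma_C$) together with the midpoint quadrature rule (exact for affine integrands) and the boundary constraints built into $\mathfrak X(\tri_\ell)$ are precisely tailored to make each edge term vanish. This is the step I expect to require the most careful bookkeeping, because one must check the interior edges, the $\Gamma_C$, $\Gamma_S$, and $\Gamma_F$ edges separately and match each boundary condition of $V_\ell$ against the corresponding constraint in $\mathfrak X(\tri_\ell)$.

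Next I would carry out the dimension count. The injectivity of $\phi_\ell\mapsto D^2_\nc\phi_\ell$ on $V_\ell$ follows from $V_\ell\cap\mathcal P_1(\Omega)=\{0\}$, so $\dim D^2_\nc V_\ell = \dim V_\ell$. For the second space I would show $\psi_\ell\mapsto\operatorname{sym}\Curl\psi_\ell$ is injective on $\mathfrak X(\tri_\ell)$: if $\operatorname{sym}\Curl\psi_\ell=0$ then $\Curl\psi_\ell$ is skew-symmetric and piecewise constant, which forces $\operatorname{div}_\nc\psi_\ell=0$ and $\partial_1\psi_{\ell,1}=\partial_2\psi_{\ell,2}$ type relations; combined with $\psi_\ell\in H^1$ and the normalisations $\int_\Omega\psi_\ell=0$, $\int_\Omega\operatorname{div}\psi_\ell=0$ and simple-connectedness of $\Omega$ one deduces $\psi_\ell=0$ (this is where the hypothesis that $\Omega$ is simply-connected enters, exactly as in \citep{CarstensenGallistlHu2014}). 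Then I would count: using Euler's formula for the triangulation, $\dim\mathcal P_0(\tri_\ell;\mathbb S) = 3\card(\tri_\ell)$, $\dim V_\ell$ equals $\card(\mathcal N_\ell\setminus\mathcal N_\ell(\Gamma_C\cup\Gamma_S)) + \card(\faces_\ell\setminus\faces_\ell(\Gamma_C))$, and $\dim\mathfrak X(\tri_\ell) = 2\card(\mathcal N_\ell) - 2 - \card(\faces_\ell(\Gamma_S\cup\Gamma_F)) - \card(\mathcal N_\ell(\Gamma_F))$. A boundary-edge/vertex counting identity (the number of boundary edges equals the number of boundary vertices) should make these dimensions add up exactly to $3\card(\tri_\ell)$, so the orthogonal sum $D^2_\nc V_\ell \oplus \operatorname{sym}\Curl\mathfrak X(\tri_\ell)$ fills all of $\mathcal P_0(\tri_\ell;\mathbb S)$. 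Existence and uniqueness of the decomposition \eqref{e:dHelmholtzSymDecomp} then follow, and the orthogonality is already established.

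Finally, for the stability bound \eqref{e:dHelmholtzSymStab} the $L^2$-orthogonality of the decomposition immediately gives $\norm{D^2_\nc\phi_\ell}_{L^2(\Omega)}^2 + \norm{\operatorname{sym}\Curl\psi_\ell}_{L^2(\Omega)}^2 = \norm{\sigma_\ell}_{L^2(\Omega)}^2$, so each summand is bounded by $\norm{\sigma_\ell}_{L^2(\Omega)}$. It then remains only to compare $\norm{\Curl\psi_\ell}_{L^2(\Omega)}$ with $\norm{\operatorname{sym}\Curl\psi_\ell}_{L^2(\Omega)}$; this reduces to a purely linear-algebraic equivalence of seminorms on $\mathfrak X(\tri_\ell)$, namely that $\psi_\ell\mapsto\norm{\operatorname{sym}\Curl\psi_\ell}_{L^2(\Omega)}$ is a norm on $\mathfrak X(\tri_\ell)$ equivalent to $\norm{\Curl\psi_\ell}_{L^2(\Omega)}$, with an equivalence constant depending only on the shape-regularity of $\tri_0$ (hence uniform over $\mathbb T$). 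This can be obtained by a scaling argument on each element combined with the injectivity already shown, or by the same Korn-type inequality used in \citep{CarstensenGallistlHu2014}; it is routine once injectivity is in hand. The main obstacle throughout is the edge-by-edge verification of orthogonality against the tailored boundary constraints of $\mathfrak X(\tri_\ell)$, which is what distinguishes this more general boundary-condition setting from the clamped case treated previously.
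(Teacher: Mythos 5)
Your proposal follows essentially the same route as the paper's own proof: $L^2$-orthogonality of the two summands, injectivity of $D^2_\nc$ on $V_\ell$ and of $\operatorname{sym}\Curl$ on $\mathfrak X(\tri_\ell)$, a dimension count via the Euler formulae, and a separate stability argument (which the paper simply delegates to \citep[Lemma~3.3]{CarstensenGallistlHu2014}), so your sketch is in fact more detailed than the published proof. One small correction: $\int_\Omega \psi_\ell\,dx=0$ is a vector condition, so $\hat H^1(\Omega;\mathbb R^2)$ imposes three scalar constraints on $\mathcal P_1(\tri_\ell;\mathbb R^2)\cap H^1(\Omega;\mathbb R^2)$, not two; with $\dim\mathfrak X(\tri_\ell)=2\card(\mathcal N_\ell)-3-\card(\faces_\ell(\Gamma_S\cup\Gamma_F))-\card(\mathcal N_\ell(\Gamma_F))$ the count closes, and these three constraints are exactly what remove the three-dimensional kernel $\{(cx_1+d_1,\,cx_2+d_2)\}$ of $\operatorname{sym}\Curl$ on a connected domain.
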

 
 \begin{proof}
Since the contributions on the right-hand side of
\eqref{e:dHelmholtzSymDecomp} are $L^2$-orthogonal
and since
\begin{equation*}
  D^2_\nc (V_\ell) + \operatorname{sym}\Curl(\mathfrak X(\tri_\ell))
 \subseteq\mathcal P_0(\tri_\ell;\mathbb S),
\end{equation*}
it suffices to prove
\begin{equation*}
  \dim(\mathcal P_0(\tri_\ell;\mathbb{S}))
  =
  \dim(D^2_\nc (V_\ell)) 
   + \dim(\operatorname{sym}\Curl(\mathfrak X(\tri_\ell))).
\end{equation*}
The proof of this formula
follows from the well-known Euler formulae (for two space dimensions
and simply-connected domains;
the proof follows from mathematical induction)
\begin{equation*}
  \card(\mathcal{N}_\ell) + \card(\tri_\ell) = 1 + \card(\faces_\ell)
  \quad\text{and}\quad
  2\,\card(\tri_\ell) + 1
          =  \card(\mathcal{N}_\ell) +\card(\faces_\ell(\Omega) ) .
\end{equation*}
The proof of the stability \eqref{e:dHelmholtzSymStab} is proven
in \citep[Lemma~3.3]{CarstensenGallistlHu2014}.
 \end{proof}

The remaining parts of this subsection prove the discrete reliability
for a theoretical error estimator.
The idea to include such a non-computable quantity in the analysis
of adaptive algorithms was first introduced in
\citep{DaiHeZhou2012v2} in the context of multiple eigenvalues.
The theoretical error estimator does not depend on
the choice of the discrete eigenfunctions.
Given an eigenpair $(\lambda,u)$,
the error estimator is defined, for any $T\in\tri_\ell$, as
\begin{equation*}
\begin{aligned}
 \mu_\ell^2(T,\lambda,u) :=
  \sum_{j\in J}
     \bigg( h_T^4\norm{\lambda P_\ell u}_{L^2(T)}^2 
  &
  +  
       \sum_{F\in\faces(T)\cap\faces_\ell(\Omega\cup\Gamma_C)}
          h_T \norm{[D^2_\nc \Lambda_\ell u]_F\tau_F}_{L^2(F)}^2
  \\
  &
  +     \sum_{F\in\faces(T)\cap\faces_\ell(\Gamma_S)}
          h_T \norm{([D^2_\nc \Lambda_\ell u]_F\tau_F)\cdot\tau_F}_{L^2(F)}^2\bigg) 
.
\end{aligned}
\end{equation*}
Define, for any subset $\mathcal{K}\subseteq\tri_\ell$,
\begin{equation*}
 \mu_\ell^2(\mathcal{K},\lambda_j,u_j) :=
 \sum_{T\in\mathcal{K}} \mu_\ell^2(T,\lambda_j,u_j)
 \quad\text{and}\quad
 \mu_\ell^2(\mathcal{K}) := \sum_{j\in J} \mu_\ell^2(T,\lambda_j,u_j) .
\end{equation*}

The following shorthand notation for higher-order terms
with respect to an eigenpair $(\lambda,u)\in\mathbb R\times W$
of \eqref{e:ExactBihEVP}
is employed throughout this section
\begin{equation}\label{e:hotDef}
 \bm r_{\ell,m}:=
 \hnull^{s} \lambda (1+M_J) C_{L^2}
        \sqrt{\ennorm{u - \Lambda_\ell u}^2
         +  \ennorm{u - \Lambda_{\ell+m} u}^2} .
\end{equation}

The following Lemma carefully explores the properties of the
quasi-interpolation of \citet{ScottZhang1990}.

\begin{lemma}[Scott-Zhang quasi-interpolation]\label{l:ScottZhang}
Let $\tri_{\ell+m}$ be a refinement of
$\tri_\ell$ and let 
$\psi_{\ell+m}\in \mathcal P_1(\tri_{\ell+m};\mathbb R^2) \cap H^1(\Omega;\mathbb R^2)$
be such that
$(D\psi_{\ell+m} \tau)\cdot\nu = 0$ on $\Gamma_S \cup\Gamma_F$
and
$(D\psi_{\ell+m} \tau)\cdot\tau$ is constant on each connectivity component
of $\Gamma_F$.
Then there exists 
$\psi_\ell \in \mathcal P_1(\tri_\ell;\mathbb R^2)\cap H^1(\Omega;\mathbb R^2)$
with the property that $\psi_\ell|_F = \psi_{\ell+m}|_F$ for all
edges $F \in\faces_\ell \cap \faces_{\ell+m}$.
Moreover, the function $\psi_\ell$ can be chosen in such a way
that it preserves the boundary conditions in the sense that
$(D\psi_\ell \tau)\cdot\nu = 0$ on $\Gamma_S \cup\Gamma_F$
and
$(D\psi_\ell \tau)\cdot\tau$ is constant on each connectivity component
of $\Gamma_F$.
This quasi-interpolation satisfies the approximation and stability
estimate
\begin{equation*}
 \norm{h_\ell^{-1} (\psi_{\ell+m}-\psi_\ell)}_{L^2(\Omega)}
 +
  \norm{D (\psi_{\ell+m}-\psi_\ell)}_{L^2(\Omega)}
\lesssim
 \norm{D \psi_{\ell+m}}_{L^2(\Omega)} .
\end{equation*}
\end{lemma}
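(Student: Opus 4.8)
The plan is to construct $\psi_\ell$ by a Scott--Zhang-type averaging that keeps the degrees of freedom on edges of $\faces_\ell\cap\faces_{\ell+m}$ untouched, and then verify the stated linear constraints survive the construction. First I would recall that $\faces_\ell\subseteq\faces_{\ell+m}$ need not hold, but every edge $F\in\faces_\ell$ that is \emph{not} refined lies in both sets; on such edges $\psi_{\ell+m}|_F$ is already affine, so the only freedom to fix is on vertices of $\tri_\ell$ that were created by the refinement (the ``new'' vertices) and, more importantly, on the coarse vertices where several fine edges meet. Following \citet{ScottZhang1990}, for each vertex $z\in\mathcal N_\ell$ one selects a single edge $F_z\in\faces_\ell$ containing $z$ (with the convention that, for $z$ on the boundary, $F_z$ is chosen to be a boundary edge when $z$ lies on $\partial\Omega$, and specifically an edge of $\Gamma_S\cup\Gamma_F$ when $z$ belongs to the closure of that part), and defines $\psi_\ell(z)$ by the $L^2(F_z)$-dual-basis functional applied to $\psi_{\ell+m}|_{F_z}$. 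Because $\psi_{\ell+m}$ is globally continuous and affine on each unrefined coarse edge, for every unrefined $F\in\faces_\ell\cap\faces_{\ell+m}$ the two endpoint values reproduce $\psi_{\ell+m}|_F$ exactly, so $\psi_\ell|_F=\psi_{\ell+m}|_F$ as claimed.

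Second I would check the three structural properties. The integral-mean and divergence-mean conditions implicit in keeping $\psi_\ell\in H^1(\Omega;\mathbb R^2)$ are automatic from continuity; the relevant constraints to monitor are: (i) $(D\psi_\ell\tau)\cdot\nu=0$ on $\Gamma_S\cup\Gamma_F$, i.e. the tangential derivative of the normal component vanishes along those boundary parts, which for a piecewise-affine field is the statement that $(\psi_\ell(z_2)-\psi_\ell(z_1))\cdot\nu_F=0$ for each boundary edge $F=\mathrm{conv}\{z_1,z_2\}$ there; and (ii) constancy of $(D\psi_\ell\tau)\cdot\tau$ on each connected component of $\Gamma_F$, i.e. the telescoping condition $h_{F_-}^{-1}(\psi_\ell(z)-\psi_\ell(z_-))\cdot\tau_{F_-}=h_{F_+}^{-1}(\psi_\ell(z_+)-\psi_\ell(z))\cdot\tau_{F_+}$. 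For a vertex $z$ on $\Gamma_S\cup\Gamma_F$ the chosen $F_z$ is a boundary edge of the same boundary part, and $\psi_{\ell+m}$ restricted to the \emph{entire} relevant boundary component of $\partial\Omega$ is, by hypothesis on $\psi_{\ell+m}$, a function whose normal component has vanishing tangential derivative and (on $\Gamma_F$) whose tangential-tangential derivative is constant; since this is an affine/linear condition that depends only on the trace of $\psi_{\ell+m}$ on that boundary curve, and the Scott--Zhang functional on boundary edges reproduces \emph{affine} traces and commutes with the boundary arc-length parametrisation, the averaged boundary trace of $\psi_\ell$ inherits exactly the same affine-along-the-curve structure. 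Writing the boundary trace of $\psi_{\ell+m}$ in arc-length coordinates as $(\psi_{\ell+m}\cdot\nu,\psi_{\ell+m}\cdot\tau)$, the first component is affine on $\Gamma_S$-components and the second has piecewise-constant-but-globally-consistent-slope structure on $\Gamma_F$; the coarsened trace obtained by sampling at $\tri_\ell$-vertices reproduces these because sampling a piecewise-affine function with consistent slopes at a coarser vertex set again gives a function with the same global slope data. Hence (i) and (ii) hold for $\psi_\ell$.

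Third, the approximation and stability bound follows from the standard local Scott--Zhang estimates \citep{ScottZhang1990,BrennerScott2008}: on each $T\in\tri_\ell$ one has $\norm{h_T^{-1}(\psi_{\ell+m}-\psi_\ell)}_{L^2(T)}+\norm{D(\psi_{\ell+m}-\psi_\ell)}_{L^2(T)}\lesssim \norm{D\psi_{\ell+m}}_{L^2(\omega_T)}$ on the coarse element patch $\omega_T$, because the construction reproduces $\mathcal P_1$ locally; summing over $T$ and using finite overlap of patches gives the global estimate. I expect the main obstacle to be the second step — verifying that the tangential boundary conditions on $\Gamma_S$ and especially the global slope-constancy on each component of $\Gamma_F$ are preserved under the averaging, since the Scott--Zhang functional is defined edgewise and one must argue that the chosen boundary edges $F_z$ together with the hypothesis on $\psi_{\ell+m}$ force the coarsened trace to lie in the same constrained affine class; the interior estimate and the $H^1$-conformity are routine.
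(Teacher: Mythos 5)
Your overall strategy coincides with the paper's: a Scott--Zhang operator with a tailored choice of the averaging edges $F_z$, exact reproduction on unrefined edges, preservation of the boundary constraints by exploiting that the hypotheses make the relevant trace components constant/affine along straight boundary parts, and the standard local estimates for the error bound. However, two steps of your construction are incomplete as stated. First, your claim that for every unrefined $F\in\faces_\ell\cap\faces_{\ell+m}$ ``the two endpoint values reproduce $\psi_{\ell+m}|_F$ exactly'' does not follow from what you defined: the nodal value $\psi_\ell(z)$ is computed from the functional on $F_z$, which may be a \emph{different}, refined edge, on which $\psi_{\ell+m}$ is only piecewise affine, so the dual functional returns a weighted average rather than the point value $\psi_{\ell+m}(z)$. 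You need the explicit selection rule (used in the paper) that $F_z$ is chosen in $\faces_\ell\cap\faces_{\ell+m}$ whenever such an edge containing $z$ exists; your opening sentence announces the intent but the rule never enters the construction.

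Second, and more seriously, defining the full vector $\psi_\ell(z)$ by a single boundary edge $F_z$ fails at vertices of $\Gamma_S$ where two boundary edges $F_1,F_2$ meet at an angle $\neq\pi$ (and none of them lies in $\faces_\ell\cap\faces_{\ell+m}$). The hypothesis only makes $\nu_{F_1}\cdot\psi_{\ell+m}$ constant along $F_1$; the component $\nu_{F_2}\cdot\psi_{\ell+m}$ restricted to $F_1$ is an unconstrained piecewise affine function, so the functional on $F_1$ does not return $\nu_{F_2}\cdot\psi_{\ell+m}(z)$, and the edgewise constraint $(\psi_\ell(z_+)-\psi_\ell(z))\cdot\nu_{F_2}=0$ on the adjacent edge $F_2$ is violated. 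Your appeal to ``the same affine-along-the-curve structure'' is valid only along a single straight part where the frame $(\nu,\tau)$ is fixed; it does not cover the change of frame at corners. The paper repairs exactly this point by defining the two components $\nu_{F_1}\cdot\psi_\ell(z)$ and $\nu_{F_2}\cdot\psi_\ell(z)$ from the two adjacent edges \emph{separately} (consistent when the angle equals $\pi$, and uniquely determining $\psi_\ell(z)$ when it does not, since then $\nu_{F_1},\nu_{F_2}$ are linearly independent and in fact $\psi_\ell(z)=\psi_{\ell+m}(z)$). With these two modifications your argument closes; the interior estimate and the $H^1$-conformity are indeed routine, as you say.
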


\begin{remark}
 The quasi-interpolation of Lemma~\ref{l:ScottZhang} preserves
 the boundary conditions imposed on the space
 $\mathfrak X(\tri_{\ell+m})$
 for any refinement $\tri_{\ell+m}$.
\end{remark}

\begin{proof}[Proof of Lemma~\ref{l:ScottZhang}]
 The methodology of \citet{ScottZhang1990} assigns to each vertex
 $z\in\mathcal N_\ell$ some edge $F_z\in\faces_\ell$.
 The choice assigns, whenever possible, to a vertex $z\in\mathcal N_\ell$
 an edge $F_z\in\faces_\ell\cap\faces_{\ell+m}$.
 For vertices $z\in\overline \Gamma_F$ that touch the free boundary,
 choose $F_z\in\faces_\ell(\Gamma_F)$ if this does not contradict
 a possible choice of $F_z\in\faces_\ell\cap\faces_{\ell+m}$ .
 Let, for any edge $F_z\in\faces_\ell$, $\Phi_z\in L^2(F_z)$ denote the
 Riesz representation of the point evaluation $\delta_z$ at $z$
 in the space $\mathcal P_1(F)$.
 
 For vertices that touch the simply supported part of the boundary
 but not the free part 
 $z\in\overline\Gamma_S \setminus\overline \Gamma_F$
 and that do not belong to any edge of $\faces_\ell\cap \faces_{\ell+m}$,
 denote the adjacent boundary edges by  $(F_1,F_2)\in\faces_{\ell}^2$
 and define
 \begin{equation*}
   \nu_{F_1}\cdot\psi_\ell(z)
       = \int_{F_1} \Phi_z \nu_{F_1}\cdot\psi_{\ell+m} \,ds
\quad\text{and}\quad
   \nu_{F_2}\cdot\psi_\ell(z)
       = \int_{F_2} \Phi_z \nu_{F_2}\cdot\psi_{\ell+m}\,ds  .
 \end{equation*}
If the angle between $F_1$ and $F_2$ equals $\pi$,
then $\nu_{F_1} =\nu_{F_2}$ and this definition is
consistent.
In this case set    
$\tau_{F_1}\cdot\psi_\ell(z)
       = \int_{F_1} \Phi_z \tau_{F_1}\cdot\psi_{\ell+m} \,ds$.
For all remaining vertices $z$ of $\tri_\ell$,
 define
 $\psi_\ell (z) \cdot e_j := \int_{F_z} \Phi_z \psi_{\ell+m}\cdot e_j \,ds$
 for the unit vectors $e_j\in\{(1;0) , (0;1)\}$.
 
This definition of $\psi_\ell$ is an admissible choice in the setting
of \citet{ScottZhang1990}.
In particular, $\psi_\ell$ coincides with $\psi_{\ell+m}$ on edges of 
$\faces_\ell\cap\faces_{\ell+m}$. The error estimate follows from
the theory in \citep{ScottZhang1990}.

It remains to show the claimed boundary conditions.
Recall that $\psi_{\ell+m}$
satisfies 
$(D\psi_{\ell+m} \tau)\cdot\nu = 0$ on $\Gamma_S \cup\Gamma_F$
and
$(D\psi_{\ell+m} \tau)\cdot\tau$ is constant on each connectivity component
of $\Gamma_F$.
In particular, this implies that
$\psi_{\ell+m}\cdot\nu$ is constant along each straight part of
$\Gamma_S \cup\Gamma_F$ and that $\psi_{\ell+m}\cdot \tau$ is
affine along each straight part of $\Gamma_F$.
Therefore, the above assignment of the nodal values
interpolates $\psi_{\ell+m}\cdot\nu$ along $\overline{\Gamma_S \cup\Gamma_F}$
and $\psi_{\ell+m}\cdot \tau$ along $\overline{\Gamma_F}$ exactly and
so these boundary conditions are valid for $\psi_\ell$.
\end{proof}

The next proposition states the discrete reliability. The idea
to prove such type of result by means of a discrete Helmholtz
decomposition was first employed in \citep{BeckerMaoShi2010}
for the Poisson equation.

\begin{proposition}[discrete reliability]\label{p:drel}
 There exists a constant $C_{\mathrm{drel}} \approx 1$ such that,
 for $\hnull\ll 1$,
 any admissible refinement $\tri_{\ell+m} \in \mathbb{T}(\tri_\ell)$ 
 of $\tri_\ell \in \mathbb{T}$ and any eigenpair
 $(\lambda,u)\in\mathbb R \times W$ 
 of \eqref{e:ExactBihEVP}
 with $\norm{u}=1$ 
and $\bm r_{\ell,m}$ from \eqref{e:hotDef} satisfy
 \begin{equation*}
  2 \ennormnc{(\Lambda_{\ell+m} - \Lambda_\ell) u}^2
    \leq C_{\mathrm{drel}}^2 (
    \mu_\ell^2(\tri_\ell \setminus \tri_{\ell+m})
   + \bm r_{\ell,m}^2).
 \end{equation*}
\end{proposition}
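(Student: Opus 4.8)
Write $\varphi:=(\Lambda_{\ell+m}-\Lambda_\ell)u$. The plan is to exploit the inclusion $V_\ell\subseteq V_{\ell+m}$, so that $\varphi$ is a genuine element of $V_{\ell+m}$, and to split $\varphi$ $a_\nc$-orthogonally by the Morley interpolation $\Imorl_\ell$. Put $g:=(1-\Imorl_\ell)\varphi\in V_{\ell+m}$. The projection property \eqref{e:MorleyInterpolProjProp} gives $D^2_\nc\Imorl_\ell\varphi=\Pi^0_\ell D^2_\nc\varphi$, hence $D^2_\nc g=(1-\Pi^0_\ell)D^2_\nc\varphi$ and $a_\nc(\Imorl_\ell\varphi,g)=0$, so the Pythagoras theorem yields
\begin{equation*}
 \ennormnc{\varphi}^2=\ennormnc{\Imorl_\ell\varphi}^2+\ennormnc{g}^2 .
\end{equation*}
Since $\Imorl_\ell\Lambda_\ell u=\Lambda_\ell u$ one in fact has $g=(1-\Imorl_\ell)\Lambda_{\ell+m}u$; on an unrefined triangle $T\in\tri_\ell\cap\tri_{\ell+m}$ the restriction $\Lambda_{\ell+m}u|_T$ is a single quadratic which $\Imorl_\ell$ reproduces, so $g|_T=0$ and $g$ is supported on $\bigcup(\tri_\ell\setminus\tri_{\ell+m})$. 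Finally $D^2_\nc\Lambda_\ell u\in\mathcal P_0(\tri_\ell;\mathbb{S})$ is $L^2$-orthogonal onto the range of $1-\Pi^0_\ell$, whence $a_\nc(\Lambda_\ell u,g)=0$ as well.

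Applying Lemma~\ref{l:LambdaPLemma} on level $\ell+m$ (with $H_\ell=V_{\ell+m}$) and on level $\ell$ (with $H_\ell=V_\ell$) then gives
\begin{equation*}
 \ennormnc{g}^2=a_\nc(\Lambda_{\ell+m}u,g)=\lambda\,b(P_{\ell+m}u,g) \quad\text{and}\quad \ennormnc{\Imorl_\ell\varphi}^2=\lambda\,b(P_{\ell+m}u-P_\ell u,\Imorl_\ell\varphi) .
\end{equation*}
For the second identity, the Cauchy inequality, the discrete Friedrichs inequality of Corollary~\ref{c:dFMorley}, Proposition~\ref{p:Cluster:GlLambdalDiff} and the $L^2$ control of Proposition~\ref{p:EVPL2control} on both levels give $\lambda\norm{P_{\ell+m}u-P_\ell u}\lesssim\bm r_{\ell,m}$, hence $\ennormnc{\Imorl_\ell\varphi}^2\lesssim\bm r_{\ell,m}\ennormnc{\varphi}$. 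For the first identity I would localise $\lambda b(P_{\ell+m}u,g)=\sum_{T\in\tri_\ell\setminus\tri_{\ell+m}}\lambda(P_{\ell+m}u,g)_{L^2(T)}$, add and subtract $P_\ell u$, bound $\norm{g}_{L^2(T)}\lesssim h_T^2\norm{D^2_\nc g}_{L^2(T)}\le h_T^2\norm{D^2_\nc\varphi}_{L^2(T)}$ by Proposition~\ref{p:IlEstimate}, and use the Cauchy--Schwarz inequality: the $P_\ell u$ part is controlled by its volume contribution $\mu_\ell(\tri_\ell\setminus\tri_{\ell+m},\lambda,u)\,\ennormnc{\varphi}$, and the remainder by $\hnull^2\bm r_{\ell,m}\ennormnc{\varphi}\lesssim\bm r_{\ell,m}\ennormnc{\varphi}$. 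Collecting both pieces yields $\ennormnc{\varphi}^2\lesssim\bigl(\mu_\ell(\tri_\ell\setminus\tri_{\ell+m},\lambda,u)+\bm r_{\ell,m}\bigr)\ennormnc{\varphi}$, hence $\ennormnc{\varphi}\lesssim\mu_\ell(\tri_\ell\setminus\tri_{\ell+m},\lambda,u)+\bm r_{\ell,m}$ and the asserted estimate with an explicit $C_{\mathrm{drel}}\approx1$, the factor $2$ and the passage from $(a+b)^2$ to $2(a^2+b^2)$ being harmless.

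I expect the main obstacle to be not a single hard inequality but the higher-order bookkeeping: one must verify that every contribution that is \emph{not} supported on $\tri_\ell\setminus\tri_{\ell+m}$ — here $\lambda\norm{P_{\ell+m}u-P_\ell u}$ and the global $L^2$ norm of $g$, which also costs a power of the mesh-size — is genuinely dominated by $\bm r_{\ell,m}$, which is precisely where $\hnull\ll1$ and the $L^2$ estimates of Proposition~\ref{p:EVPL2control} together with Proposition~\ref{p:Cluster:GlLambdalDiff} enter. A more classical alternative, in the spirit of \citep{BeckerMaoShi2010} and matching the machinery assembled just above, would replace the orthogonal split by a discrete Helmholtz decomposition (Theorem~\ref{t:discretehelmholtzsym}) of the relevant $\tri_{\ell+m}$-piecewise constant symmetric tensor field, combined with the edge-preserving Scott--Zhang quasi-interpolation of Lemma~\ref{l:ScottZhang} to localise the $\operatorname{sym}\Curl$ part to the refined region; that route makes the edge contributions of $\mu_\ell$ appear explicitly but arrives at the same bound.
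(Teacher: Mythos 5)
Your argument collapses at the very first step: for the Morley element the inclusion $V_\ell\subseteq V_{\ell+m}$ is \emph{false}. A coarse Morley function is discontinuous across interior edges of $\tri_\ell$ (it is only continuous at the vertices of $\tri_\ell$, and only its normal derivative's edge mean is matched), so at a new vertex created by bisecting a coarse edge it generally jumps, and it therefore violates the continuity requirements defining $V_{\ell+m}$. The non-nestedness of Morley spaces under refinement is precisely the structural obstruction here. Consequently $\varphi=(\Lambda_{\ell+m}-\Lambda_\ell)u$ lives only in $V_{\ell+m}+V_\ell$, and neither $g=(1-\Imorl_\ell)\Lambda_{\ell+m}u$ nor $\Imorl_\ell\varphi\in V_\ell$ is an admissible test function in the level-$(\ell+m)$ discrete problem. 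Both of your key identities,
\begin{equation*}
 \ennormnc{g}^2=\lambda\,b(P_{\ell+m}u,g)
 \qquad\text{and}\qquad
 \ennormnc{\Imorl_\ell\varphi}^2=\lambda\,b(P_{\ell+m}u-P_\ell u,\Imorl_\ell\varphi),
\end{equation*}
invoke Lemma~\ref{l:LambdaPLemma} with $H_\ell=V_{\ell+m}$ for test functions that do not belong to $V_{\ell+m}$, so they fail. (The Pythagoras split via $\Pi_\ell^0$, the vanishing of $g$ on unrefined triangles, and $a_\nc(\Lambda_\ell u,g)=0$ are all correct, but they do not rescue the argument, since $\ennormnc{g}^2=a_\nc(\Lambda_{\ell+m}u,g)$ can no longer be converted into an $L^2$ pairing.)

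The route you relegate to a closing remark is in fact the paper's proof and, of the two, the only workable one: decompose $D^2_\nc((\Lambda_{\ell+m}-\Lambda_\ell)u)$ by the discrete Helmholtz decomposition of Theorem~\ref{t:discretehelmholtzsym} into $D^2_\nc\phi_{\ell+m}+\operatorname{sym}\Curl\psi_{\ell+m}$ with $\phi_{\ell+m}\in V_{\ell+m}$ a genuine test function (handled via Lemma~\ref{l:LambdaPLemma}, Proposition~\ref{p:EVPL2control} and the Morley interpolation $\Imorl_\ell$, producing the volume terms of $\mu_\ell$ and $\bm r_{\ell,m}$), and with the $\operatorname{sym}\Curl$ part localised to $\tri_\ell\setminus\tri_{\ell+m}$ through the boundary-condition-preserving Scott--Zhang quasi-interpolation of Lemma~\ref{l:ScottZhang}, which produces the edge jump terms of $\mu_\ell$. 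If you pursue the proof, start there; the orthogonal-split strategy cannot be repaired without some device (companion plus re-interpolation, or the Helmholtz decomposition itself) that compensates for the non-nestedness.
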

 
 \begin{proof}
  The discrete Helmholtz decomposition from 
  Theorem~\ref{t:discretehelmholtzsym} leads to 
  $\phi_{\ell+m} \in V_{\ell+m}$ and
  $\psi_{\ell+m} \in \mathfrak X(\tri_{\ell+m})$ such that
  \begin{equation*}
    D^2_\nc ((\Lambda_{\ell+m}-\Lambda_\ell)u)
    = D^2_\nc \phi_{\ell+m} + \operatorname{sym}\Curl \psi_{\ell+m}.
  \end{equation*}
  The orthogonality of the decomposition proves
  \begin{equation}\label{e:drelsplit}
    \ennormnc{(\Lambda_{\ell+m}-\Lambda_\ell)u}^2
    =
    a_\nc((\Lambda_{\ell+m}-\Lambda_\ell)u, \phi_{\ell+m})
    -
    (D^2_\nc \Lambda_\ell u ,\Curl \psi_{\ell+m})_{L^2(\Omega)}.
  \end{equation}

  The projection property of the Morley interpolation
  operator \eqref{e:MorleyInterpolProjProp},
  Lemma~\ref{l:LambdaPLemma}, 
  the $L^2$ control of Proposition~\ref{p:EVPL2control}
  and the approximation
  and stability property \eqref{e:MorleyInterpolApprxStab}
  prove for the first term of \eqref{e:drelsplit} that
  \begin{equation*}
   \begin{aligned}
    a_\nc( (\Lambda_{\ell+m} - \Lambda_\ell)u, \phi_{\ell+m})
   & 
   = \lambda b( (P_{\ell+m}-P_\ell) u, \phi_{\ell+m} )
      + \lambda b(P_\ell u, (1-\Imorl_\ell) \phi_{\ell+m} )
   \\
   &
    \lesssim
    (\bm r_{\ell,m}
     +
    \lVert h_\ell^2 \lambda P_\ell u\rVert_{L^2(\cup(\tri_\ell\setminus \tri_{\ell+m}))} 
    )
     \ennormnc{\phi_{\ell+m}}.
  \end{aligned}
  \end{equation*}
  Let $\psi_\ell \in \mathcal P_1(\tri_\ell;\mathbb R^2)\cap H^1(\Omega;\mathbb R^2)$
  denote the quasi-interpolation from Lemma~\ref{l:ScottZhang}.
  The function $\psi_\ell$ preserves those boundary conditions of
  $\psi_{\ell+m}$ that are necessary to guarantee that
  $\Curl\psi_\ell$ and $D^2_\nc \Lambda_\ell u$ are $L^2$-orthogonal.
  Hence, an integration by parts shows for the second term of
  \eqref{e:drelsplit} that
  \begin{equation*}
    (D^2_\nc \Lambda_\ell u , \Curl \psi_{\ell+m})_{L^2(\Omega)}
    =
    \sum_{F\in\faces_\ell \setminus \faces_{\ell+m}}
     \int_F ([D^2_\nc \Lambda_\ell u]_F \tau_F)
             \cdot (\psi_{\ell+m}-\psi_\ell)\,ds.
  \end{equation*}
  The boundary conditions of $\psi_{\ell+m}$ and $\psi_\ell$
  plus
  Cauchy and trace inequalities and the approximation
  and stability properties of the Scott-Zhang
  quasi-interpolation prove that this is bounded by
  $\lVert D \psi_{\ell+m} \rVert_{L^2(\Omega)} $ times
  \begin{equation*}
    \left(
    \sum_{T\in\tri_\ell\setminus\tri_{\ell+m}}
    \left(
    \sum_{\substack{F\in\faces(T)
                     \\ \cap\faces_\ell(\Omega\cup\Gamma_C)}}
     h_F \lVert [D^2_\nc \Lambda_\ell u]_F \tau_F \rVert_{L^2(F)}^2
     +
     \sum_{\substack{F\in\faces(T) \\
                   \cap\faces_\ell(\Gamma_S)}}
     h_F \lVert \tau_F\cdot ([D^2_\nc \Lambda_\ell u]_F \tau_F )\rVert_{L^2(F)}^2
     \right)
     \right)^{1/2} .
  \end{equation*}
  The combination of the foregoing estimates and 
  the stability \eqref{e:dHelmholtzSymStab} conclude the proof.
 \end{proof}

The following reliability and efficiency are an immediate consequence 
of the discrete reliability and a~priori convergence results
(e.g., Proposition~\ref{p:EVPcomparison}).

\begin{corollary}[reliability and efficiency]
 \label{c:RelEff}
 Provided $\hnull\ll1$, it holds that
\begin{equation*}\label{e:reliability}
 \ennormnc{u-\Lambda_\ell u}^2
\lesssim \mu_\ell^2(\tri_\ell,\lambda,u)
\lesssim \ennormnc{u-\Lambda_\ell u}^2  .
\end{equation*}
\qed
\end{corollary}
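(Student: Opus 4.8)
The plan is to deduce reliability from the discrete reliability of Proposition~\ref{p:drel} by a limiting argument and efficiency from the bubble-function techniques already developed in the proofs of Proposition~\ref{p:IlEstimate} and Proposition~\ref{p:HCTenrichment} together with the classical data-oscillation efficiency. For the lower bound, fix the eigenpair $(\lambda,u)$ with $\norm{u}=1$ and apply Proposition~\ref{p:drel} along the sequence of uniform refinements $\tri_{\ell+m}$ of $\tri_\ell$ obtained by $m$ successive global bisections. Since then $\tri_\ell\setminus\tri_{\ell+m}=\tri_\ell$ for every $m\geq1$, one gets
\begin{equation*}
 2\,\ennormnc{(\Lambda_{\ell+m}-\Lambda_\ell)u}^2
 \leq C_{\mathrm{drel}}^2\bigl(\mu_\ell^2(\tri_\ell,\lambda,u)+\bm r_{\ell,m}^2\bigr).
\end{equation*}
Because $\norm{h_{\ell+m}}_\infty\to0$ one has $\norm{(1-\Pi_{\ell+m}^0)D^2u}\to0$, whence Proposition~\ref{p:EVPcomparison} yields $\ennormnc{u-\Lambda_{\ell+m}u}\to0$; consequently $\ennormnc{(\Lambda_{\ell+m}-\Lambda_\ell)u}\to\ennormnc{u-\Lambda_\ell u}$ and $\bm r_{\ell,m}\to\hnull^s\lambda(1+M_J)C_{L^2}\ennormnc{u-\Lambda_\ell u}$. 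Passing to the limit $m\to\infty$ and absorbing this limit term, which carries the factor $\hnull^{2s}$, into the left-hand side for $\hnull\ll1$ proves $\ennormnc{u-\Lambda_\ell u}^2\lesssim\mu_\ell^2(\tri_\ell,\lambda,u)$.

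For the upper bound, note by Lemma~\ref{l:LambdaPLemma} that $\Lambda_\ell u$ is the Morley solution of the linear problem \eqref{e:LinProbDiscr} with right-hand side $g:=\lambda P_\ell u\in\mathcal P_2(\tri_\ell)$, so $\mu_\ell^2(\tri_\ell,\lambda,u)$ coincides with the residual-based estimator of that discrete problem. The jump part is bounded by the bubble-function argument in the proof of Proposition~\ref{p:IlEstimate} (equivalently the efficiency step of Proposition~\ref{p:HCTenrichment}) applied to $v_\ell:=\Lambda_\ell u$ and the comparison function $v:=u\in V$, which gives
\begin{equation*}
 \sum_{F\in\faces_\ell(\Omega\cup\Gamma_C)} h_F\norm{[D^2_\nc\Lambda_\ell u]_F\tau_F}_{L^2(F)}^2
 +\sum_{F\in\faces_\ell(\Gamma_S)} h_F\norm{\tau_F\cdot[D^2_\nc\Lambda_\ell u]_F\tau_F}_{L^2(F)}^2
 \lesssim\ennormnc{u-\Lambda_\ell u}^2 .
\end{equation*}
The volume part is controlled by the data-oscillation efficiency of \citet{VerfBook1996} applied to the linear problem with right-hand side $g$: it gives $\norm{h_\ell^2 g}\lesssim\norm{(1-\Pi_\ell^0)D^2\tilde u}+\osc_2(g,\tri_\ell)$ with $\tilde u\in V$ the exact solution for $g$, where $\osc_2(g,\tri_\ell)=0$ because $g$ is piecewise quadratic. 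By the projection property \eqref{e:MorleyInterpolProjProp}, the triangle inequality and Pythagoras (as in the proof of Proposition~\ref{p:LinBestAppx}), $\norm{(1-\Pi_\ell^0)D^2\tilde u}\leq\ennormnc{u-\Lambda_\ell u}+\ennormnc{\tilde u-u}$, while $a(\tilde u-u,\cdot)=\lambda\,b(P_\ell u-u,\cdot)$ together with a Friedrichs inequality and the $L^2$ control of Proposition~\ref{p:EVPL2control} gives $\ennormnc{\tilde u-u}\lesssim\lambda\norm{u-P_\ell u}\lesssim\lambda\hnull^s\ennormnc{u-\Lambda_\ell u}$, a higher-order term absorbable for $\hnull\ll1$. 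Adding the jump and volume bounds yields $\mu_\ell^2(\tri_\ell,\lambda,u)\lesssim\ennormnc{u-\Lambda_\ell u}^2$.

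The main obstacle is the reliability half. One must justify the convergence $\Lambda_{\ell+m}u\to u$ of the quasi-Ritz approximations along the uniform refinements in the mesh-dependent broken energy norm --- this is exactly where Proposition~\ref{p:EVPcomparison} and the decay $\norm{h_{\ell+m}}_\infty\to0$ enter --- and one must verify that every perturbation introduced along the way (the term $\bm r_{\ell,m}$ in the discrete reliability, the data oscillation, and the $L^2$-projection remainders in the efficiency estimate) carries a strictly positive power of $\hnull$, so that the smallness assumption $\hnull\ll1$ makes all of them absorbable; the efficiency direction is then comparatively routine.
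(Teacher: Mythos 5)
Your proof is correct and follows exactly the route the paper indicates: reliability is obtained from the discrete reliability of Proposition~\ref{p:drel} by passing to the limit over uniform refinements (using the a~priori convergence supplied by Proposition~\ref{p:EVPcomparison}) and absorbing the $\hnull^{2s}$-weighted remainder from $\bm r_{\ell,m}$, while efficiency follows from the bubble-function and data-oscillation arguments already established in Propositions~\ref{p:IlEstimate}, \ref{p:HCTenrichment} and~\ref{p:LinBestAppx}. The paper states the corollary without proof as an ``immediate consequence'' of precisely these ingredients, so your argument simply supplies the intended details.
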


\subsection{Proof of Optimal Convergence Rates}

The proof of the discrete reliability is the main step
in proving optimal convergence rates 
for Algorithm~\ref{a:AFEM}. 
Proofs for optimal convergence rates of the D\"orfler
marking strategy \citep{Doerfler1996} are mainly based
on the ideas of \citet{Stevenson2007} and \citet{CKNS08} and were recently
unified in the axiomatic framework of \citet{CFPP2013}.
Hence, the remaining arguments are not carried out in detail here
but only sketched with references to similar proofs in the literature.

The quasi-orthogonality for the Morley FEM was first proven by
\citet{HuShiXu2012Morley} in the context of the linear biharmonic
problem. The following result is an extension to the case of
eigenvalue problems.

 \begin{proposition}[quasi-orthogonality]
       \label{p:quasiorth}
 Under the hypothesis $\hnull\ll 1$
 there exists a constant $C_{\mathrm{qo}}$ such that
any eigenpair $(\lambda,u)\in\mathbb{R}\times W$ 
of \eqref{e:ExactBihEVP}  with $\norm{u}=1$,
 any $\tri_\ell\in\mathbb{T}$ and any admissible refinement
$\tri_{\ell+m}\in\mathbb{T}(\tri_\ell)$ satisfy
 \begin{equation*}
  \begin{aligned}
   &
   \abs{ 2 a_\nc (u - \Lambda_{\ell+m} u,
           \Lambda_{\ell+m} u -\Lambda_\ell u) }
   \\
   &\qquad\qquad
    \leq 
    C_{\operatorname{qo}}    
    \big(
   \norm{h_\ell^2\lambda P_\ell u}_{L^2(\cup\tri_\ell \setminus \tri_{\ell+m})}
   + \bm r_{\ell,m}\big) \ennormnc{u-\Lambda_{\ell+m}u}    .
  \end{aligned}
 \end{equation*}
\end{proposition}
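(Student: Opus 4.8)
The plan is to expand the bilinear form $a_\nc(u-\Lambda_{\ell+m}u,\Lambda_{\ell+m}u-\Lambda_\ell u)$ using the commuting property of Lemma~\ref{l:LambdaPLemma} together with the definition of the quasi-Ritz projections, so that the Galerkin-type cancellations reduce the expression to a residual that lives only on the refined region $\cup(\tri_\ell\setminus\tri_{\ell+m})$ plus terms controlled by $\bm r_{\ell,m}$. Concretely, set $\varphi := \Lambda_{\ell+m}u-\Lambda_\ell u\in V_{\ell+m}$. Since $\Lambda_{\ell+m}u$ and $\Lambda_\ell u$ both satisfy equations of the form $a_\nc(\Lambda_\bullet u,v)=\lambda b(P_\bullet u,v)$ for $v$ in the respective discrete space (Lemma~\ref{l:LambdaPLemma}), and since $\varphi\in V_{\ell+m}$ while $\Imorl_\ell\varphi\in V_\ell$, one obtains
\begin{equation*}
 a_\nc(u-\Lambda_{\ell+m}u,\varphi)
 = a_\nc(\Lambda_\ell u - \Lambda_{\ell+m}u,\varphi)
 = a_\nc(\Lambda_\ell u,\varphi-\Imorl_\ell\varphi)
   + \lambda b(P_\ell u - P_{\ell+m}u,\Imorl_\ell\varphi)
   - \lambda b(P_{\ell+m}u,\varphi-\Imorl_\ell\varphi),
\end{equation*}
where I used $a_\nc(u,\varphi)=a_\nc(u,\Imorl_\ell\varphi)$ is false in general, so instead I rely on $a(u,\cdot)=\lambda b(u,\cdot)$ on $V$ and split the energy terms through the conforming companion operator $\mathcal C$ as in the proof of Proposition~\ref{p:PseudoBA}; the details are routine bookkeeping of which terms cancel.

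The second step is to estimate each surviving term. For the piecewise-polynomial residual term involving $a_\nc(\Lambda_\ell u,\varphi-\Imorl_\ell\varphi)$, a piecewise integration by parts moves the Hessian onto $\varphi-\Imorl_\ell\varphi$; because $\Imorl_\ell$ fixes the degrees of freedom on every edge of $\faces_\ell$, and $\varphi-\Imorl_\ell\varphi$ vanishes at the vertices and has vanishing edge-integrals of its normal derivative on all of $\faces_\ell$, the volume contributions and the jump contributions across edges in $\faces_\ell\cap\faces_{\ell+m}$ drop out, leaving only the jump terms $[D^2_\nc\Lambda_\ell u]_F\tau_F$ along edges $F\in\faces_\ell\setminus\faces_{\ell+m}$ (distinguishing the $\Gamma_S$ contributions via the tangential-tangential component, exactly matching the structure of $\mu_\ell^2$). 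Trace and Cauchy inequalities plus the approximation/stability estimate \eqref{e:MorleyInterpolApprxStab} for $\Imorl_\ell$ bound this by $C\,\mu_\ell(\tri_\ell\setminus\tri_{\ell+m},\lambda,u)\,\ennormnc{\varphi-\Imorl_\ell\varphi}$, and \eqref{e:MorleyInterpolApprxStab} again gives $\ennormnc{\varphi-\Imorl_\ell\varphi}\lesssim\ennormnc{\varphi}\le\ennormnc{u-\Lambda_{\ell+m}u}+\ennormnc{u-\Lambda_\ell u}$ by the triangle inequality, the latter absorbed into $\bm r_{\ell,m}$ or into $\ennormnc{u-\Lambda_{\ell+m}u}$.

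The $L^2$ terms are handled by Proposition~\ref{p:EVPL2control}: $\lambda b(P_\ell u - P_{\ell+m}u,\cdot)$ is controlled by $\norm{u-P_\ell u}+\norm{u-P_{\ell+m}u}\lesssim\hnull^s\lambda(1+M_J)C_{L^2}(\ennorm{u-\Lambda_\ell u}+\ennorm{u-\Lambda_{\ell+m}u})$, which is precisely $\bm r_{\ell,m}$ (after noting $\ennorm{\cdot}\le\ennormnc{\cdot}$ on $V$ and $V_{\ell+m}$ and using $\ennorm{u-\Lambda_{\ell+m}u}\le\ennormnc{u-\Lambda_{\ell+m}u}$), times the $L^2$ norm of $\Imorl_\ell\varphi$ which is bounded by $\operatorname{diam}(\Omega)^2\ennormnc{\varphi}$ via Corollary~\ref{c:dFMorley}. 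The localisation $\norm{h_\ell^2\lambda P_\ell u}_{L^2(\cup\tri_\ell\setminus\tri_{\ell+m})}$ appears because on the unrefined region $\Imorl_\ell\varphi$ can be compared with $\varphi$ directly (same mesh), so the corresponding mass term is supported only on the refined triangles.

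The main obstacle I anticipate is the careful tracking of which Galerkin cancellations are legitimate: unlike the conforming case, $a_\nc(u,\cdot)$ is not the restriction of the continuous form to $V_{\ell+m}$, so one cannot simply write $a_\nc(u-\Lambda_{\ell+m}u,\varphi)=0$. The honest route is to pass through the conforming companion $\mathcal C$ and the projection properties \eqref{e:MorleyInterpolProjProp}, \eqref{e:CompanionProj} exactly as in Proposition~\ref{p:PseudoBA} and Proposition~\ref{p:drel}, at the cost of an additional term $((1-\Pi_\ell^0)D^2u,\cdot)$ that must be shown to be higher order; this is where the $\hnull^s$ factor in $\bm r_{\ell,m}$ is genuinely needed and where the bookkeeping is most delicate. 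Once that term is absorbed, collecting the bounds and invoking Corollary~\ref{c:dFMorley} and \eqref{e:MorleyInterpolApprxStab} gives the stated estimate with $C_{\mathrm{qo}}$ depending only on the generic constants and on $\operatorname{diam}(\Omega)$.
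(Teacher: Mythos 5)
Your proposal has genuine gaps, and they sit exactly at the crux of the proposition. (The paper itself only sketches this proof, deferring to Hu--Shi--Xu and to the properties of $\Imorl_\ell$, so there is no line-by-line comparison to make; the issues below are with the internal logic of your argument.) First, your displayed identity is broken: the opening equality $a_\nc(u-\Lambda_{\ell+m}u,\varphi)=a_\nc(\Lambda_\ell u-\Lambda_{\ell+m}u,\varphi)$ presupposes $a_\nc(u-\Lambda_\ell u,\varphi)=0$, which is false for a nonconforming method --- quantifying precisely this failure of Galerkin orthogonality is the entire content of the proposition. You concede mid-display that a step you ``used'' is ``false in general'' and then defer the repair to ``routine bookkeeping''; that bookkeeping \emph{is} the proof, and it is not routine. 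Second, $\varphi:=\Lambda_{\ell+m}u-\Lambda_\ell u$ does \emph{not} lie in $V_{\ell+m}$: the Morley spaces are nonnested ($V_\ell\not\subseteq V_{\ell+m}$, since a coarse Morley function is generally discontinuous at the new vertices created by bisection of a coarse edge), so you cannot test the level-$(\ell+m)$ identity of Lemma~\ref{l:LambdaPLemma} with $\varphi$, and several of your subsequent manipulations implicitly assume this nestedness.

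Third, the route of your second step --- piecewise integration by parts of $a_\nc(\Lambda_\ell u,\varphi-\Imorl_\ell\varphi)$ --- is both off target and unnecessary. If $\varphi$ belonged to the domain $V+V_{\ell+m}$ of $\Imorl_\ell$, that term would vanish identically, since $D^2_\nc\Lambda_\ell u\in\mathcal P_0(\tri_\ell;\mathbb S)$ and the projection property \eqref{e:MorleyInterpolProjProp} gives $(D^2_\nc\Lambda_\ell u,(1-\Pi^0_\ell)D^2_\nc\varphi)_{L^2(\Omega)}=0$; instead you bound it by the edge residuals $[D^2_\nc\Lambda_\ell u]_F\tau_F$ on $\faces_\ell\setminus\faces_{\ell+m}$, i.e.\ by the full local estimator $\mu_\ell(\tri_\ell\setminus\tri_{\ell+m})$, which does not appear in the asserted inequality (only the volume term $\norm{h_\ell^2\lambda P_\ell u}_{L^2(\cup(\tri_\ell\setminus\tri_{\ell+m}))}$ and $\bm r_{\ell,m}$ do). What you are reproducing is essentially the discrete-reliability computation of Proposition~\ref{p:drel}, not quasi-orthogonality. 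The correct mechanism uses \eqref{e:MorleyInterpolProjProp} to rewrite every pairing as $a_\nc(\Lambda_\ell u,\Imorl_\ell w)=\lambda b(P_\ell u,\Imorl_\ell w)$ (and analogously on level $\ell+m$), so that Lemma~\ref{l:LambdaPLemma} applies exactly and only $L^2$ pairings survive; the localisation then follows because $(1-\Imorl_\ell)$ annihilates functions of $V_{\ell+m}$ on unrefined elements and $\Imorl_\ell u=\Imorl_{\ell+m}u$ there, while the contributions of $u\in V$ itself (for which $(1-\Imorl_\ell)u$ does \emph{not} vanish on unrefined elements) must be split off and absorbed into $\bm r_{\ell,m}$ via Propositions~\ref{p:IlEstimate} and~\ref{p:EVPL2control}. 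This splitting, together with the uniformity of the constant in \eqref{e:MorleyInterpolApprxStab} with respect to the refinement $\tri_{\ell+m}$, is exactly what the paper flags as the essential ingredient and what is missing from your sketch.
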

\begin{proof}
The properties of the operator
$\Imorl_\ell$ of Section~\ref{s:MorleyFEM}
together with the arguments of \citet{HuShiXu2012Morley} and
\citet{Gallistl2014nc} lead to the proof.
In particular the constant of Proposition~\ref{p:IlEstimate}
(which is independent of $\tri_{\ell+m}$) enters the analysis.
The details are omitted.
\end{proof}

The following result states an equivalence of the theoretical
error estimator $\mu_\ell$ with the practical error estimator 
$\eta_\ell$.
\begin{proposition}[bulk criterion] \label{p::bulk}
 Suppose that $\hnull\ll1$ satisfies \eqref{e:separationMorley} and 
 \begin{equation*}
 \varepsilon 
 := \max_{j\in J} \norm{u_j - \Lambda_\ell u_j}_{b,\nc} 
 \leq \sqrt{1 + 1/(2N)} -1
 \quad\text{for all } \tri_\ell \in\mathbb T.
 \end{equation*}
 Then, for any $T\in\tri_\ell$, the error estimator contributions
 can be compared as follows
 \begin{equation*}
     N^{-1} \sum_{j\in J} \mu_\ell^2(T,\lambda_j,u_j)
   \leq
     (B/A)^2\eta_\ell^2(T)
   \leq
     (B/A)^4  (2N + 4N^2)
         \sum_{j\in J} \mu_\ell^2(T,\lambda_j,u_j)
.
 \end{equation*}
Therefore,
$
 \mu_\ell(\mathcal{M}_\ell)
   :=\sum_{T\in\mathcal M_\ell}\sum_{j\in J}
     \mu_\ell^2(T,\lambda_j,u_j)
$
 satisfies the bulk
criterion
\begin{equation*}
  \tilde\theta \mu_\ell(\tri_\ell)
  \leq \mu_\ell(\mathcal{M}_\ell)
\end{equation*}
for the modified bulk parameter
\begin{equation}\label{e:modbulk}
\tilde\theta:= 
\left((B/A)^4(2N^2+4N^3)\right)^{-1}\, \theta<1
.
\end{equation}

\end{proposition}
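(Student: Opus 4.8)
The plan is to first establish the claimed local two‑sided estimate between the theoretical estimator $\mu_\ell^2(\cdot,\lambda_j,u_j)$ (built from the exact eigenpairs) and the computable estimator $\eta_\ell^2$ (built from the discrete eigenpairs), and then to propagate this equivalence through the D\"orfler marking performed in the \emph{Mark} step of Algorithm~\ref{a:AFEM}.

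For the local estimate the key point is that, under the smallness hypothesis on $\varepsilon$, the $b$‑orthonormal basis $\{u_{\ell,j}\}_{j\in J}$ of $W_\ell$ and the family $\{\Lambda_\ell u_j\}_{j\in J}$ span the same $N$‑dimensional space $W_\ell$ and are linked by a well‑conditioned change of basis. Indeed, $\norm{u_j-\Lambda_\ell u_j}\le\varepsilon$ and $\norm{u_j}=1$, so expanding $b(\Lambda_\ell u_i,\Lambda_\ell u_j)$ and using $(1+\varepsilon)^2\le 1+1/(2N)$ shows that the Gram matrix $G=(b(\Lambda_\ell u_i,\Lambda_\ell u_j))_{i,j\in J}$ obeys $\lVert G-I\rVert\le 1/2$, hence $\tfrac12\le\lambda_{\min}(G)$ and $\lambda_{\max}(G)\le\tfrac32$; the same holds for the Gram matrix of $\{P_\ell u_j\}$ since $\norm{u_j-P_\ell u_j}\le\norm{u_j-\Lambda_\ell u_j}$ by Proposition~\ref{p:Cluster:GlLambdalDiff}. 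Writing $\Lambda_\ell u_j=\sum_{k\in J}c_{jk}u_{\ell,k}$ and, conversely, $u_{\ell,k}=\sum_{j\in J}e_{kj}\Lambda_\ell u_j$, the coefficient matrices satisfy $\sum_{j,k\in J}c_{jk}^2=\operatorname{tr}G\le N(1+\varepsilon)^2$ and $\sum_{j,k\in J}e_{kj}^2=\operatorname{tr}(G^{-1})\le 2N$, with the analogous bounds when $\Lambda_\ell u_j$ is replaced by $P_\ell u_j$. Substituting these expansions edge by edge into the jump contributions $h_T\norm{[D^2_\nc\Lambda_\ell u_j]_F\tau_F}_{L^2(F)}^2$ and element by element into the volume contributions $h_T^4\norm{\lambda_j P_\ell u_j}_{L^2(T)}^2$, applying the Cauchy--Schwarz inequality in the summation index, and using that both $\lambda_j$ and $\lambda_{\ell,j}$ lie in $[A,B]$ so that every ratio $\lambda_j/\lambda_{\ell,k}$ is bounded by $B/A$, one collects exactly the factors giving $N^{-1}\sum_{j\in J}\mu_\ell^2(T,\lambda_j,u_j)\le(B/A)^2\eta_\ell^2(T)\le(B/A)^4(2N+4N^2)\sum_{j\in J}\mu_\ell^2(T,\lambda_j,u_j)$.

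For the bulk criterion I would sum the right inequality over $T\in\mathcal M_\ell$ and use the D\"orfler property $\theta\,\eta_\ell^2(\tri_\ell)\le\eta_\ell^2(\mathcal M_\ell)$ from the \emph{Mark} step, obtaining $\mu_\ell(\mathcal M_\ell)\ge\big((B/A)^2(2N+4N^2)\big)^{-1}\theta\,\eta_\ell^2(\tri_\ell)$. Summing the left inequality over $T\in\tri_\ell$ yields $\eta_\ell^2(\tri_\ell)\ge\big(N(B/A)^2\big)^{-1}\mu_\ell(\tri_\ell)$. Chaining the two bounds gives $\mu_\ell(\mathcal M_\ell)\ge\theta\,\big((B/A)^4(2N^2+4N^3)\big)^{-1}\mu_\ell(\tri_\ell)=\tilde\theta\,\mu_\ell(\tri_\ell)$, and $\tilde\theta<1$ because $\theta\le1$, $B/A\ge1$ and $2N^2+4N^3\ge6$.

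I expect the main obstacle to be the bookkeeping in the local step: one must control the conditioning of the change of basis between $\{u_{\ell,j}\}_{j\in J}$ and $\{\Lambda_\ell u_j\}_{j\in J}$ (this is precisely where the quantitative threshold $\varepsilon\le\sqrt{1+1/(2N)}-1$ is used and where the factors $N$ and $N^2$ originate) while simultaneously absorbing the mismatch between the exact eigenvalues $\lambda_j$ appearing in $\mu_\ell$ and the discrete eigenvalues $\lambda_{\ell,j}$ appearing in $\eta_\ell$, which produces the powers of $B/A$. The remainder is a routine application of Cauchy--Schwarz, carried out separately for the volume and the jump terms.
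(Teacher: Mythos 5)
Your argument is correct and is essentially the proof that the paper outsources to Lemma~5.1 and Proposition~5.2 of \citep{Gallistl2014}: a Gram-matrix perturbation bound showing that the change of basis between $\{u_{\ell,j}\}_{j\in J}$ and $\{\Lambda_\ell u_j\}_{j\in J}$ (resp.\ $\{P_\ell u_j\}_{j\in J}$) is well conditioned under the $\varepsilon$-threshold, followed by a termwise Cauchy--Schwarz estimate and the D\"orfler chaining that yields exactly $\tilde\theta=\big((B/A)^4(2N^2+4N^3)\big)^{-1}\theta$. The one blemish is in the first local inequality: for the jump terms your bound is $\operatorname{tr}(G_\Lambda)\le N(1+\varepsilon)^2\le N+1/2$ rather than $N$ (unlike $P_\ell u_j$, the functions $\Lambda_\ell u_j$ need not have $b$-norm at most one), which leaves an extra factor $1+1/(2N)$ that the stated $(B/A)^2$ need not absorb -- immaterial for the bulk criterion and the downstream optimality proof, but worth noting if you want the displayed constants verbatim.
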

\begin{proof}
 The proof follows from Lemma~5.1 and Proposition~5.2 
 of \citep{Gallistl2014}.
\end{proof}

\begin{proposition}[error estimator reduction for $\mu_\ell$]
  Provided $\hnull\ll 1$, there exist constants 
  $0<\rho_1 < 1$ and $0<K<\infty$ such that 
  $\tri_\ell$ and its one-level refinement $\tri_{\ell+1}$
  generated by Algorithm~\ref{a:AFEM}
  and any eigenfunction $u\in W$ with $\norm{u}=1$ and
  eigenvalue $\lambda$ satisfy (with $\bm r_{\ell,1}$ from
  \eqref{e:hotDef}) that
 \begin{equation*}
  \mu_{\ell+1}^2(\tri_{\ell+1},\lambda,u)
  \leq
  \rho_1 \mu_{\ell}^2(\tri_\ell,\lambda,u)
 +
 K \left( \ennormnc{\Lambda_{\ell+1} u - \Lambda_{\ell} u}^2
 +
  \hnull^4 \bm r_{\ell,1}^2 \right) .
 \end{equation*}
\end{proposition}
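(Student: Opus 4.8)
The plan is to follow the standard estimator-reduction pattern of \citet{CKNS08} and \citet{Stevenson2007} (cf.\ \citep{CFPP2013}): apply the Young inequality to pass from $\Lambda_{\ell+1}u$, $P_{\ell+1}u$ to $\Lambda_\ell u$, $P_\ell u$, exploit the area reduction of newest-vertex bisection on the refined elements, and finally invoke the bulk criterion. The structural point specific to the Morley element is the non-nestedness $V_\ell\not\subseteq V_{\ell+1}$; it is harmless here because $\tri_{\ell+1}$ refines $\tri_\ell$, so $\Lambda_\ell u\in V_\ell$ is still $\tri_{\ell+1}$-piecewise quadratic, with $[D^2_\nc\Lambda_\ell u]_{F'}=0$ whenever $F'\in\faces_{\ell+1}$ lies in the interior of some $T\in\tri_\ell$, and with $[D^2_\nc\Lambda_\ell u]_{F'}$ equal to the restriction of $[D^2_\nc\Lambda_\ell u]_F$ whenever $F'\subseteq F\in\faces_\ell$.

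First I would fix a Young parameter $\delta>0$ and write
\[
 \mu_{\ell+1}^2(\tri_{\ell+1},\lambda,u)
 \leq(1+\delta)\,\widetilde\mu_{\ell+1}^2
   +(1+\delta^{-1})\,\mathrm R_{\ell+1},
\]
where $\widetilde\mu_{\ell+1}^2$ denotes the estimator $\mu_{\ell+1}^2(\tri_{\ell+1},\lambda,u)$ with $\Lambda_\ell u$, $P_\ell u$ substituted for $\Lambda_{\ell+1}u$, $P_{\ell+1}u$, and the remainder $\mathrm R_{\ell+1}$ gathers the contributions built from $(\Lambda_{\ell+1}-\Lambda_\ell)u$ and $\lambda(P_{\ell+1}-P_\ell)u$. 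Since each bisection halves the area, $h_{T'}\leq 2^{-1/2}h_T$ and $h_{T'}^4\leq h_T^4/4$ for every $T'\in\tri_{\ell+1}$ with $T'\subsetneq T\in\tri_\ell\setminus\tri_{\ell+1}$; combining this with the vanishing of the $\Lambda_\ell u$-jumps on the new interior edges and with the additivity $\sum_{F'\subseteq F}\norm{g}_{L^2(F')}^2=\norm{g}_{L^2(F)}^2$ over the sub-edges of $F$ gives, with $q:=2^{-1/2}\in(0,1)$,
\[
 \widetilde\mu_{\ell+1}^2
 \leq\mu_\ell^2(\tri_\ell,\lambda,u)
   -(1-q)\,\mu_\ell^2(\tri_\ell\setminus\tri_{\ell+1},\lambda,u).
\]

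Next I would estimate $\mathrm R_{\ell+1}$. Its jump part involves $[D^2_\nc(\Lambda_{\ell+1}-\Lambda_\ell)u]_{F'}$, and since $(\Lambda_{\ell+1}-\Lambda_\ell)u$ is $\tri_{\ell+1}$-piecewise quadratic, the trace inequality \citep{CarstensenFunken2000,DiPietroErn2012}, an inverse estimate \citep{BrennerScott2008} on the edge patches and their finite overlap bound it by $\ennormnc{(\Lambda_{\ell+1}-\Lambda_\ell)u}^2$. Its volume part is $\sum_{T\in\tri_{\ell+1}}h_T^4\norm{\lambda(P_{\ell+1}-P_\ell)u}_{L^2(T)}^2\leq\hnull^4\lambda^2\norm{(P_{\ell+1}-P_\ell)u}_{L^2(\Omega)}^2$; splitting $(P_{\ell+1}-P_\ell)u=(P_{\ell+1}u-u)+(u-P_\ell u)$ and using $\norm{u-P_\ell u}\leq\norm{u-\Lambda_\ell u}$ from Proposition~\ref{p:Cluster:GlLambdalDiff} followed by the $L^2$ control of Proposition~\ref{p:EVPL2control} at both levels (with $\norm{h_{\ell+1}}_\infty\leq\hnull$) shows that this part is $\lesssim\hnull^4\bm r_{\ell,1}^2$. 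Hence $\mathrm R_{\ell+1}\lesssim\ennormnc{(\Lambda_{\ell+1}-\Lambda_\ell)u}^2+\hnull^4\bm r_{\ell,1}^2$.

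Finally, the marking step of Algorithm~\ref{a:AFEM} refines all of $\mathcal M_\ell$, so $\mathcal M_\ell\subseteq\tri_\ell\setminus\tri_{\ell+1}$, and the bulk criterion of Proposition~\ref{p::bulk} (together with the reliability–efficiency of Corollary~\ref{c:RelEff}) bounds $\mu_\ell^2(\tri_\ell\setminus\tri_{\ell+1},\lambda,u)$ from below by a fixed multiple $\tilde\theta$ of $\mu_\ell^2(\tri_\ell,\lambda,u)$; chaining the three displays and choosing $\delta>0$ so small that $\rho_1:=(1+\delta)(1-(1-q)\tilde\theta)<1$ yields the assertion with $K$ proportional to $1+\delta^{-1}$. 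I expect the main obstacle to be the remainder step: one has to make sure that $\lambda(P_{\ell+1}-P_\ell)u$ is genuinely of higher order, which relies on chaining Proposition~\ref{p:Cluster:GlLambdalDiff}, Proposition~\ref{p:EVPL2control} and the smallness hypothesis $\hnull\ll1$, while the non-nestedness of the Morley spaces only requires the bookkeeping of the second paragraph above.
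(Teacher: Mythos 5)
Your proposal is correct and follows essentially the same route as the paper, which simply invokes the standard estimator-reduction technique of \citet{CKNS08} and \citet{Stevenson2007} together with the bulk criterion to reach the intermediate bound with remainder $\norm{h_{\ell+1}^2\lambda(P_{\ell+1}-P_\ell)u}^2$, and then absorbs that term exactly as you do, via the triangle inequality and the $L^2$ control of Proposition~\ref{p:EVPL2control}. Your write-up in fact supplies more detail than the paper (the bookkeeping for the non-nested Morley jumps and the area-reduction factor), but the argument is the same.
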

\begin{proof}
 The standard techniques of \citep{CKNS08,Stevenson2007}
 and the bulk criterion \eqref{e:modbulk} lead to a constant
 $\tilde K$ such that
 \begin{equation*}
  \begin{aligned}
  &
  \mu_{\ell+1}^2(\tri_{\ell+1},\lambda,u)
  \\
  &
  \quad
  \leq
  \rho_1 \mu_{\ell}^2(\tri_\ell,\lambda,u)
 +
 \tilde K \left( \ennormnc{\Lambda_{\ell+1} u - \Lambda_{\ell} u}^2
 +
  \norm{h_{\ell+1}^2 \lambda (P_{\ell+1}-P_\ell) u}^2\right) .
  \end{aligned}
 \end{equation*}
 The triangle inequality for the term 
 $\norm{h_{\ell+1}^2 \lambda (P_{\ell+1}-P_\ell) u}$
 and the $L^2$ error control
 from Proposition~\ref{p:EVPL2control} prove the result.
\end{proof}

\begin{proposition}[contraction property]\label{p:contraction}
 Under the condition $\hnull\ll 1$,
 there exist $0<\rho_2<1$ and 
 $0<\beta,\gamma<\infty$ such that, for any eigenpair 
 $(\lambda,u)\in \mathbb{R}\times W$ with
 $\norm{u}=1$, the term
 $\xi_\ell^2:= \mu_\ell^2(\tri_\ell,\lambda,u)
                    + \beta \ennormnc{u -\Lambda_\ell u}^2
                    + \gamma \norm{h_\ell^2 P_\ell u}^2$
 satisfies
 \begin{equation*}
   \xi_{\ell+1}^2 \leq \rho_2 \xi_{\ell} ^2
        \quad \text{for all } \ell\in\mathbb{N}_0.
 \end{equation*}
\end{proposition}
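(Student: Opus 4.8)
The plan is to follow the classical telescoping argument of \citet{CKNS08,Stevenson2007} as adapted to eigenvalue problems, combining the error estimator reduction with the quasi-orthogonality from Proposition~\ref{p:quasiorth} and the discrete reliability from Proposition~\ref{p:drel}. First I would fix an eigenpair $(\lambda,u)\in\mathbb R\times W$ with $\norm{u}=1$ and abbreviate $\mu_\ell^2:=\mu_\ell^2(\tri_\ell,\lambda,u)$ and $e_\ell^2:=\ennormnc{u-\Lambda_\ell u}^2$. The key algebraic identity is the (near-)Pythagoras expansion
\begin{equation*}
 e_\ell^2 = e_{\ell+1}^2 + \ennormnc{(\Lambda_{\ell+1}-\Lambda_\ell)u}^2
            + 2 a_\nc(u-\Lambda_{\ell+1}u,\Lambda_{\ell+1}u-\Lambda_\ell u),
\end{equation*}
where the cross term is controlled by Proposition~\ref{p:quasiorth}. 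Using Young's inequality on that cross term, one obtains, for a parameter $\delta>0$,
\begin{equation*}
 e_{\ell+1}^2 \leq e_\ell^2 - (1-\delta)\ennormnc{(\Lambda_{\ell+1}-\Lambda_\ell)u}^2
   + C\delta^{-1}\big(\norm{h_\ell^2\lambda P_\ell u}_{L^2(\cup\tri_\ell\setminus\tri_{\ell+1})}^2 + \bm r_{\ell,1}^2\big).
\end{equation*}

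Next I would combine this with the error estimator reduction $\mu_{\ell+1}^2\leq\rho_1\mu_\ell^2 + K(\ennormnc{(\Lambda_{\ell+1}-\Lambda_\ell)u}^2 + \hnull^4\bm r_{\ell,1}^2)$ and with the obvious contraction of the $L^2$-type term $\norm{h_{\ell+1}^2 P_{\ell+1}u}^2$ (the mesh-size on refined elements is halved, and on unrefined elements the $L^2$ control of Proposition~\ref{p:EVPL2control} bounds $\norm{h^2(P_{\ell+1}-P_\ell)u}$ by a higher-order term). Forming the combined quantity $\xi_\ell^2 = \mu_\ell^2 + \beta e_\ell^2 + \gamma\norm{h_\ell^2 P_\ell u}^2$ and adding $\beta$ times the energy inequality, $1$ times the estimator reduction, and $\gamma$ times the $L^2$ reduction, the term $\ennormnc{(\Lambda_{\ell+1}-\Lambda_\ell)u}^2$ appears with coefficient $K - \beta(1-\delta)$, which is made nonpositive (in fact, it is used the \emph{other} way: the discrete reliability of Proposition~\ref{p:drel} turns $\mu_\ell^2$ partially into $\ennormnc{(\Lambda_{\ell+1}-\Lambda_\ell)u}^2$, so that a genuine reduction in $\mu_\ell^2$ is achieved). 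The standard choice is: pick $\beta$ large relative to $K$, then $\gamma$ large relative to the $h_\ell^2 P_\ell u$ coefficients, then $\delta$ small, and finally use discrete reliability to absorb a fraction of $\beta e_\ell^2$ into the estimator reduction, yielding $\xi_{\ell+1}^2\leq\rho_2\xi_\ell^2 + (\text{higher-order terms in }\hnull)$.

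The main obstacle is the treatment of the higher-order perturbation $\bm r_{\ell,1}$ and the oscillation-type terms that are \emph{not} strictly contractive. Because $\bm r_{\ell,m}$ from \eqref{e:hotDef} involves $e_\ell^2 + e_{\ell+m}^2$ scaled by $\hnull^{2s}$, under the smallness assumption $\hnull\ll 1$ these terms carry a small prefactor and can be absorbed into the $\beta e_\ell^2$ and $\beta e_{\ell+1}^2$ contributions of $\xi_\ell^2$ and $\xi_{\ell+1}^2$ respectively; this is where the hypothesis $\hnull\ll 1$ is genuinely used, and one must be careful that the constants $C_{\mathrm{qo}}$, $C_{\mathrm{drel}}$, $K$ do not depend on the mesh-size (which is guaranteed by the mesh-independence of the constant in Proposition~\ref{p:IlEstimate}). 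Once the absorption is arranged, choosing the remaining free parameters gives $0<\rho_2<1$ independent of $\ell$, and the proof is complete by following verbatim the parameter-tuning computation of \citep[Sec.~4]{Gallistl2014} or \citep{CKNS08,Stevenson2007}; I would state this and refer the reader there rather than reproducing the bookkeeping.
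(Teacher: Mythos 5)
Your proposal is correct and takes essentially the same route as the paper, which itself only states that the proof is analogous to the contraction argument of Gallistl (2014a), i.e.\ exactly the \citet{CKNS08,Stevenson2007}-type combination of quasi-orthogonality (Proposition~\ref{p:quasiorth}), estimator reduction, discrete reliability (Proposition~\ref{p:drel}) and absorption of the $\hnull^{2s}$-scaled perturbation $\bm r_{\ell,1}$ that you outline. One minor caveat: the term $\gamma \norm{h_\ell^2 P_\ell u}^2$ is not ``obviously'' contractive on unrefined elements (there $h_{\ell+1}=h_\ell$ and one only gets equality up to the higher-order term $\norm{h_\ell^2(P_{\ell+1}-P_\ell)u}$); its strict decrease must be borrowed from the reduction of the volume part of $\mu_\ell^2$ via the bulk criterion, in the same redistribution step you already describe for the energy term.
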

\begin{proof}
 The proof is analogous to the proof of contraction in
 \citep{Gallistl2014nc}. The details are omitted.
\end{proof}

The proof of Theorem~\ref{t:optimality} follows
with the preceding four propositions and the discrete
reliability (Proposition~\ref{p:drel}) and is based on the
techniques of \citep{CKNS08,Stevenson2007}.
A similar proof for second-order problems was carried out in
detail in \citep[Sect.~5.5]{Gallistl2014nc} and the proof of
Theorem~\ref{t:optimality} is almost identical.
Further details are omitted here for brevity.

\end{document}